\documentclass{amsart}

\usepackage{amsmath,amssymb}
\usepackage{amsfonts}
\usepackage{graphicx}
\usepackage[all]{xy}
\usepackage{tikz}
\usepackage{pgfplots}
\pgfplotsset{compat=1.7}
\usepgfplotslibrary{polar}
\usetikzlibrary{decorations.pathreplacing,shapes.misc}
\usetikzlibrary{arrows,snakes,backgrounds}
\usepackage{caption}

\numberwithin{equation}{section}

\DeclareMathOperator{\Aut}{Aut}
\DeclareMathOperator{\Hom}{Hom}

\def\D{\mathcal{D}}
\def\sli{\mathcal{P}}
\def\Stab{\mathrm{Stab}}
\def\lto{\longrightarrow}
\def\iso{\xrightarrow{\sim}}
\def\xto{\xrightarrow}
\def\EG{\mathrm{EG}}

\newtheorem{thm}{Theorem}[section]
\newtheorem{defi}[thm]{Definition}
\newtheorem{prop}[thm]{Proposition}
\newtheorem{cor}[thm]{Corollary}

\newtheorem{lem}[thm]{Lemma}

\author{Chien-Hsun Wang}
\address{National Center for theoretical Sciences, Taipei, Taiwan}
\email{chwang@ncts.ntu.edu.tw}

\begin{document}

\title[Stability conditions and braid group actions on affine $A_n$ quivers]
{Stability conditions and braid group actions on affine $A_n$ quivers}

\begin{abstract}

We study stability conditions on the Calabi-Yau-$N$ categories associated to an affine type $A$ quiver which can be constructed from certain meromorphic quadratic differentials with zeroes of order $N-2$. We follow Ikeda's work to show that this moduli space of quadratic differentials is isomorphic to the space of stability conditions quotient by the spherical subgroup of the autoequivalence group. We show that the spherical subgroup is isomorphic to the braid group of affine type $A$ based on Khovanov-Seidel-Thomas method.

\end{abstract}

\maketitle

\section{Introduction}

Bridgeland stability conditions of quivers with potentials arose in the study of representation theory and physics.
Certain classes of ($10-2N$) dimension supersymmetric gauge theory can be engineered by compactification of ten dimensional superstring theories on Calabi Yau $N$-folds \cite{KKV}. The moduli space of gauge theories can be described by a lattice $\Gamma$ of charges and a linear function $Z:\Gamma\rightarrow\mathbb{C} $ called central charge. Supersymmety conditions require the mass $M$ of an object with charge $\gamma\in\Gamma$ to satisfy the inequality 
\begin{equation*}
	|Z(\gamma)|\leq M.
\end{equation*}
An object in the theory is called semistable (or BPS states in physics terminology) if its central charge saturates the above bound and each semistable object has a well defined phase $\theta=\arg Z$. Douglas introduced the concept of $\Pi$ stability condition \cite{DFR} that new semistable objects may appear when the phases of two semistable objects coincide as varying in the moduli space. When those happen, there is a wall of marginal stability for an object in the moduli space. Thus, we have the structure of wall-and-chamber for the moduli space. In a chamber on the one side of the wall, an object satisfying the stability condition is a semi-stable object. While in other chamber on the other side of the wall, the object does not satisfy the stability condition, so it would not be a semistable object and other new semistable objects may appear in that chamber.

The spectrum of semistable objects and their interactions can be encoded by $N-2$ colored quiver $Q$ with potentials $W$. Vector fields correspond to vertices of the quiver and matter fields correspond to arrows, while potential terms encode the interaction between matter fields \cite{ACC}. Lower dimension gauge theories contain more matter fields. This reflects that corresponding quivers have more possible degrees of arrows. Thus, the study of higher colored quivers would have interesting applications both in mathematics and physics \cite{FM}. 

When we move around in the moduli space, the central charge function and the set of semistable objects may change so the corresponding quiver would also be different. The operation of mutation of quiver $Q$ at a vertex gives the new quiver with potential $Q'$ when crossing the wall. The definition of mutation is given in section 4 below. We consider the derived category $D_{Q,W}$ of finite dimensional modules over a dg Ginzburg algebra of a quiver with potential. The result of Keller and Yang states that there is a pair of triangulated equivalence between $D_{Q,W}$
and the derived category $\mathcal{D}(Q',W')$ of a quiver with potential mutated at the vertex \cite{KY}. 

Seiberg and Witten \cite{SW} showed that the information of the data of 4-dimensional supersymmetric gauge theories can be encoded in a family of Riemann surfaces which vary over the moduli space and the central charge can be represented by the period integral of the square root of a meromorphic quadratic differential $\lambda=\sqrt{\phi}$ for a given charge $\gamma\in\Gamma$
\begin{equation*}
	Z(\gamma)=\int_{\gamma}\lambda.
\end{equation*}	
Trajectories of the associated quadratic differential on the marked surface with poles determine a triangulation of the surface. Gaiotto-Moore-Neitzke's work implies the relation between the mutation of quiver at a vertex and flip of a triangulation \cite{GMN}. This connects the wall-crossing of the quiver algebra and geometric description of certain classes of 4-dimensional gauge theories.

Bridgeland \cite{Br1} introduced the abstract definition of stability conditions $\sigma = (Z, \sli)$ on a triangulated category $\D$ which consists of a group  homomorphism from Grothendieck group of $\D$ to complex number $Z \colon K(\D) \to \mathbb{C}$ and a family of full additive subcategories $\sli (\theta) \subset \mathcal{D}$ of semistable objects of phase for $\theta \in \mathbb{R}$ satisfying certain axioms. The space of stability conditions $\mathrm{Stab}(\mathcal{D})$ is a smooth complex manifold, locally modelled on a vector space of central charge $\mathrm{Hom}(K(\D),\mathbb{C})$. It admits actions of the autoequivalence group $\mathrm{Aut}(\mathcal{D})$.

For $N=3$, Bridgeland and Smith \cite{BrSm} constructed a distinguished component $\mathrm{Stab}^{\circ}(\mathcal{D}_{Q,W})$ containing stability conditions whose heart is one of the standard hearts in the derived category $D_{Q,W}$ and prove that the orbifold of $\mathrm{Stab}^{\circ}(\mathcal{D})$ qoutient by the subgroup of autoequivalence $\mathrm{Aut}(\mathcal{D})$ which preserve the component is isomorphic to the moduli space of quadratic differentials.

This result is generalized to categories of dg Ginzburg algebras of quiver with potential which can be constructed from marked surfaces for any $N$ \cite{IQ}. In this paper, we will follow Ikeda's method to prove the case of affine type $A$ quiver for any $N$.

\subsection{Statement of results}
We consider a meromorphic quadratic differential
\begin{equation*}
	\phi(z)=\bigg[\frac{\prod_{i=1}^{n}(z-a_i)}{z^{p}}\bigg]^{N-2}\frac{dz^{\otimes 2}}{z^2}
\end{equation*}  on the Riemann surface $\mathbb{P}^1$
which has a pole of order $(N-2)p+2$ at $0$ and a pole of order $(N-2)(n-p)+2$ at $\infty$ and $n$ zeroes of order $N-2$. Denote by $\mathrm{Quad}(N,n)$ the moduli space of quadratic differentials with such prescribed singularities. We can define the homology groups $H_{+}(\phi)$ for $N$ even and $H_{-}(\phi)$ for $N$ odd.
Let $\Gamma$ be a free abelian group of rank $n$. The $\Gamma$-framing of $\phi\in \mathrm{Quad}(N,n)$ is an isomorphism of abelian groups $\psi: \Gamma\rightarrow H_{\pm}(\phi)$. Let $\mathrm{Quad}(N,n)^{\Gamma}$ be the moduli space of framed diffrentials and $\mathrm{Quad}(N,n)^{\Gamma}_*$ be the connected component containing a fixed framed differential.

Denote $D(\tilde{A}_{n,N})$ by the derived category of dg modules over the dg quiver algebra of affine type $A$. There is a distinguished component $\mathrm{Stab}^{\circ}(D(\tilde{A}_{n,N}))\subset\mathrm{Stab}(D(\tilde{A}_{n,N}))$ which contains stability conditions with the standard heart. Seidel and Thomas define the spherical twist functor in the $N$-spherical objects of the heart in the derived category \cite{ST}. Denote by $\mathrm{Sph}(\tilde{A}_{n,N})$ the spherical subgroup generated by spherical twist functors. Let $\mathrm{Sph}^{0}(\tilde{A}_{n,N})$ be the subgroup of spherical twists which act as the identity on $K(D(\tilde{A}_{n,N}))$. We prove that there is a $\mathbb{C}$-equivariant isomorphism 
\begin{equation*}
	\mathrm{Quad}(N,n)_{*}^{\Gamma} \simeq \mathrm{Stab}^{\circ}(D(\tilde{A}_{n,N})) / \mathrm{Sph}^{0}(\tilde{A}_{n,N}).
\end{equation*}
We follow arguments of of \cite{BrSm} and combine the following two results to prove this theorem. First, King and Qiu \cite{KQ} establishes an equivalence between the mutation of cluster category and tilted heart of the dervied category of dg algebras for acyclic quivers. Second, Torkildsen \cite{Tor12} constructs the geometric description of an affine type $A$ quiver and its relation to the corresponding cluster categories. He shows that the mutation class of $N$-angulation of the annulus with marked points quotient by some relations is bijective to the mutation class of the associated colored quiver of affine type $A$. 

Trajectories called $N-2$ diagonals of $\phi(z)$ connecting marked points on annulus give rise to an $N$-angulation $\Delta$. We associate the colored quiver $Q$ of affine type $A$ to the $N$-angulation $\Delta$. As varying the meromorphic differential $\phi$, an $N$-angulation would be mutated to a new $N$-angulation when crossing some walls in $\mathrm{Quad}(N,n)$. By using the results of \cite{KQ} and \cite{Tor12},  we obtain a bijection between $N-2$-diagonals and simple objects of a heart in $D(\tilde{A}_n)$. This implies that there is an isomorphism between the Grothendieck group of $D(\tilde{A}_n)$ and periods of the differential $\phi(z)$.	

Seidel and Thomas showed that the spherical subgroup of the derived category of of type $A$ quiver is isomorphic to the braid group of type $A$. Ikeda can apply this result directly to determine the group of autoequivalences of derived category of the type $A$ quiver algebras.

For the quiver of affine type $A$, we will apply Gadbled-Thiel-Wagner's work of the faithfulness of the categorical action. The main idea of their proof is to associate the geometric intersection number of curves on the disc and the Poincare polynomial of morphisms between objects of the category. Then we rework the procedure of Seidel-Thomas \cite[section 4]{ST} and show that the spherical subgroup $\mathrm{Sph}(\tilde{A}_{n,N})$ is isomorphic to the affine type $A$ braid group $B_{\tilde{A}_n}$.

The paper is organized as follows. In the second section, we review the quiver algebra of affine type $A$ and give the proof of the proposition mentioned in the last paragraph. In the third section, we define the quadratic differential $\phi$ on the Riemann surface which is a annulus and trajectories induced by $\phi$. In the fourth section, we show the bijection between the mutation of the N-angulation of the annulus and the mutation of the quiver of affine type $A$. In the fifth section, we review the definition of Bridgeland stability conditions and apply the framework of Bridgeland and Smith \cite[Theorem 1.2]{BrSm} to prove the main theorem.

\section{Derived Category of quiver algebras and Braid groups}
We denote $\mathbb{K}$ an algebraically closed field.
Let $Q=(Q_0,Q_1)$ be an acyclic quiver and $\mathbb{K}Q$ its path algebra. To categorify the path algebra, we take the Ginzburg dg algebra $(\mathbb{K}\Gamma_NQ,d)$ \cite{Ginz} for the quiver by adding an opposite arrow $a^*$ with degree $N-2$ to each of original arrow $a$ and a loop $t_i$ at each vertex $i$ with degree $N-1$ for each vertex
with a differential $d : \mathbb{K}\Gamma_NQ \to \mathbb{K}\Gamma_NQ$ of degree $-1$ defined by
\begin{itemize}
	\item $da =da^*= 0$ for $a \in Q_1$;
	\item $d t_i = e_i \,\left(\sum_{a \in Q_1}(aa^* -a^*a)\right)\,e_i$;
\end{itemize}
where $e_i$ is the idempotent at a vertex $i \in Q_0$. 

We take the derived category $D(k\Gamma_NQ)$ of dg modules over $\mathbb{K}\Gamma_NQ$, and consider the full sub-category $D_{fd}(\mathbb{K}\Gamma_NQ)$ of finite dimensional homology dg modules. Denote by $hom(E,F)$ the cochain complex of $\mathbb{K}$ vector space and $\mathrm{Hom}^i(E,F)=H^i(hom(E,F))$ its cohomology for objects $E$, $F$  of the category of cochain complexes of $\mathbb{K}\Gamma_NQ$-modules.

\begin{defi}
	An objects $S$ in $D_{fd}(\mathbb{K}\Gamma_NQ)$ is called $N$-spherical if
	\begin{align*}
		\Hom^i(S,S) = 
		\begin{cases}
			\mathbb{K} \quad \text{if} \quad i=0,N  \\
			\,0 \quad \text{otherwise} 
		\end{cases}
	\end{align*}
	and the composition
	\begin{equation*}
		\mathrm{Hom}^j(S,U)\times \mathrm{Hom}^{N-j}(U,S)\rightarrow Hom^N(S,S)\simeq \mathbb{K}
	\end{equation*}
	is a nondegenerate pairing for all $U\in D_{fd}(\mathbb{K}\Gamma_NQ),j\in\mathbb{Z}$.
\end{defi}

In \cite{Kel2} Keller shows that $D_{fd}(\mathbb{K}\Gamma_NQ)$ is a Calabi-Yau-$N$ category in the sense that for any two objects $E,F$, there is an isomorphism $\mathrm{Hom}(E,F)\simeq \mathrm{Hom}^N(F,E)^*$.

\begin{defi}
	For a spherical objects $S\in D_{fd}(\mathbb{K}\Gamma_NQ)$, we define the spherical twist functor $\Phi_{S}\in \mathrm{Aut}(D_{fd}(\mathbb{K}\Gamma_NQ))$ by
	\begin{equation*}
		\mathrm{Tw}_S(E)=\{\mathrm{Hom}^*(S,E)\otimes S\xrightarrow{\text{ev}} E\}
	\end{equation*}
	for any object $E\in D_{fd}(\mathbb{K}\Gamma_NQ)$.	  
\end{defi}
The map $ev$ is the evaluation map and $\mathrm{Tw}_S(E)$ is the cone of $ev$ defined by the cone of the complex
\begin{equation*}
	\mathrm{Hom}^*(S,E)\otimes S\xrightarrow{\text{ev}} E \rightarrow  \mathrm{Tw}_S(E).
\end{equation*}

We are interested in the case when $Q$ is the affine type $A$ quiver $\tilde{A}_n$ and we set its derived category $D(\tilde{A}_{n,N}):=D_{fd}(\mathbb{K}\Gamma_N\tilde{A}_n)$. The standard heart of $D(\tilde{A}_{n,N})$ is equivalent to dg modules over $\mathbb{K}\Gamma_N\tilde{A}_n$ \cite{Am} and
is generated by simple $\mathbb{K}\Gamma_N\tilde{A}_n$-modules $S_1,\cdots,S_n$ which are $N$-spherical objects in $D(\tilde{A}_{n,N})$ \cite{Kel2}. So we have spherical twists $\mathrm{Tw}_{S_1},\cdots ,\mathrm{Tw}_{S_n}\in\Aut(D(\tilde{A}_{n,N}))$.
We define the spherical twist subgroup $\mathrm{Sph}(\tilde{A}_{n,N})$ to be the subgroup of $\Aut(D(\tilde{A}_{n,N}))$ generated by spherical twists:
\begin{equation*}
	\mathrm{Sph}(\tilde{A}_{n,N})):=\langle \mathrm{Tw}_{S_1},\cdots ,\mathrm{Tw}_{S_n} \rangle.
\end{equation*}
\subsection{Braid groups of affine type $A$}
The extended affine type $A$ braid group $\hat{B}_{\tilde{A}_n}$ is generated by
$\sigma_i$ for $i=1,\cdots,n$ consisting of a crossing between the strands labelled $i$ and $i+1$ modulo $n$ and a cyclic permutation $\rho$ of points 1 to $n$. They satisfy the following relations:
\begin{align}
	\sigma_i\sigma_j&=\sigma_j\sigma_i\quad\quad\quad\quad  for\quad|i-j|\geq 2\\
	\sigma_i\sigma_{i+1}\sigma_i&=\sigma_{i+1}\sigma_i\sigma_{i+1} \quad\, for\quad i=1,\cdots,n \\
	\rho\sigma_i\rho^{-1}&=\sigma_{i+1}\quad\quad\quad\,\,\,\,\,\,for\quad i=1,\cdots,n
\end{align}
where the indices of the generators is modulo $n$, $\sigma_{n+i}=\sigma_i$. 

\begin{figure}
	\centering
	\begin{minipage}[b]{0.6\textwidth}	
		\begin{tikzpicture}[scale=3]
			(0,0) -- (3mm,0mm) arc (0:30:3mm) -- cycle;
			\draw (0,0) circle (1cm);
			\draw (0,0) circle (0.4cm);
			
			\node [label={[xshift=2cm, yshift=2.5cm]n}] {};
			\node [label={[xshift=0.5cm, yshift=0.4cm]n}] {};
			\node [label={[xshift=0cm, yshift=3cm]1}] {};
			\node [label={[xshift=0cm, yshift=0.45cm]1}] {};
			
			\node [label={[xshift=-2cm, yshift=2.5cm]2}] {};
			\node [label={[xshift=-0.5cm, yshift=0.36cm]2}] {};
			\node [label={[xshift=-3.2cm, yshift=0.9cm]3}] {};
			\node [label={[xshift=-0.85cm, yshift=0.0cm]3}] {};
			\node [label={[xshift=-3.3cm, yshift=-1.2cm]i}] {};
			\node [label={[xshift=-0.9cm, yshift=-0.6cm]i}] {};
			\node [label={[xshift=-1.4cm, yshift=-3.6cm]i+1}] {};
			\node [label={[xshift=-0.3cm, yshift=-1.2cm]i+1}] {};
			\node [label={[xshift=3.2cm, yshift=0.8cm]n-1}] {};
			\node [label={[xshift=0.7cm, yshift=0.0cm]n-1}] {};
			\node [label={[xshift=3.3cm, yshift=-1.5cm]n-2}] {};
			\node [label={[xshift=0.75cm, yshift=-0.7cm]n-2}] {};
			
			\node [label={[xshift=-3.28cm, yshift=0.3cm].}] {};
			\node [label={[xshift=-3.36cm, yshift=-0.1cm].}] {};
			\node [label={[xshift=-3.3cm, yshift=-0.5cm].}] {};
			\node [label={[xshift=2.68cm, yshift=-2.4cm].}] {};
			\node [label={[xshift=2.5cm, yshift=-2.64cm].}] {};
			\node [label={[xshift=2.3cm, yshift=-2.82cm].}] {};

			\fill (canvas cs:x=0.0cm,y=1.0cm) circle (1pt);
			\fill (canvas cs:x=0.0cm,y=0.4cm) circle (1pt);
			\fill (canvas cs:x=0.6cm,y=0.8cm) circle (1pt);
			\fill (canvas cs:x=0.24cm,y=0.32cm) circle (1pt);
			\fill (canvas cs:x=0.923cm,y=0.38cm) circle (1pt);
			\fill (canvas cs:x=0.368cm,y=0.15cm) circle (1pt);
			\fill (canvas cs:x=-0.6cm,y=0.8cm) circle (1pt);
			\fill (canvas cs:x=-0.24cm,y=0.32cm) circle (1pt);
			\fill (canvas cs:x=0.6cm,y=-0.8cm) circle (1pt);
			\fill (canvas cs:x=0.24cm,y=-0.32cm) circle (1pt);
			\fill (canvas cs:x=-0.923cm,y=0.38cm) circle (1pt);
			\fill (canvas cs:x=-0.369cm,y=0.15cm) circle (1pt);
			\fill (canvas cs:x=0.923cm,y=-0.38cm) circle (1pt);
			\fill (canvas cs:x=0.368cm,y=-0.15cm) circle (1pt);
			\fill (canvas cs:x=-0.96cm,y=-0.28cm) circle (1pt);
			\fill (canvas cs:x=-0.38cm,y=-0.923cm) circle (1pt);
			\fill (canvas cs:x=-0.384cm,y=-0.112cm) circle (1pt);
			\fill (canvas cs:x=-0.15cm,y=-0.369cm) circle (1pt);

			\draw  (0.6,0.8) -- (0.24,0.32) ;
			\draw  (0.923,0.38) -- (0.369,0.15) ;
			\draw  (0.923,-0.38) -- (0.368,-0.15) ;
			\draw  (-0.6,0.8) -- (-0.24,0.32) ;
			\draw  (0.6,-0.8) -- (0.24,-0.32) ;
			\draw  (-0.923,0.38) -- (-0.369,0.15) ;
			\draw  (0.0,1.0) -- (0,0.4) ;
			\draw  (-0.96,-0.28) .. controls (-0.5,-0.6) and (-0.15,-0.369) .. (-0.15,-0.369) ;
			\draw  (-0.38,-0.923).. controls (-0.46,-0.63) and (-0.45,-0.5) ..(-0.45,-0.5)  ;
			\draw  (-0.384,-0.112).. controls (-0.45,-0.3) ..(-0.45,-0.42)  ;

		\end{tikzpicture}
	\end{minipage}	
	\begin{minipage}[b]{0.6\textwidth}	
		\begin{tikzpicture}[scale=3]
			(0,0) -- (3mm,0mm) arc (0:30:3mm) -- cycle;
			\draw (0,0) circle (1cm);
			\draw (0,0) circle (0.4cm);
			
			\node [label={[xshift=2cm, yshift=2.5cm]n}] {};
			\node [label={[xshift=0.5cm, yshift=0.4cm]n}] {};
			\node [label={[xshift=0cm, yshift=3cm]1}] {};
			\node [label={[xshift=0cm, yshift=0.45cm]1}] {};
			
			\node [label={[xshift=-2cm, yshift=2.5cm]2}] {};
			\node [label={[xshift=-0.5cm, yshift=0.36cm]2}] {};
			\node [label={[xshift=-3.2cm, yshift=0.9cm]3}] {};
			\node [label={[xshift=-0.85cm, yshift=0.0cm]3}] {};
			\node [label={[xshift=-3.3cm, yshift=-1.2cm]i}] {};
			\node [label={[xshift=-0.9cm, yshift=-0.6cm]i}] {};
			\node [label={[xshift=-1.4cm, yshift=-3.6cm]i+1}] {};
			\node [label={[xshift=-0.3cm, yshift=-1.2cm]i+1}] {};
			\node [label={[xshift=3.2cm, yshift=0.8cm]n-1}] {};
			\node [label={[xshift=0.7cm, yshift=0.0cm]n-1}] {};
			\node [label={[xshift=3.3cm, yshift=-1.5cm]n-2}] {};
			\node [label={[xshift=0.75cm, yshift=-0.7cm]n-2}] {};
			\node [label={[xshift=1cm, yshift=-3.8cm]i+2}] {};

			\node [label={[xshift=-3.28cm, yshift=0.3cm].}] {};
			\node [label={[xshift=-3.36cm, yshift=-0.1cm].}] {};
			\node [label={[xshift=-3.3cm, yshift=-0.5cm].}] {};
			\node [label={[xshift=2.68cm, yshift=-2.4cm].}] {};
			\node [label={[xshift=2.5cm, yshift=-2.64cm].}] {};
			\node [label={[xshift=2.3cm, yshift=-2.85cm].}] {};

			\fill (canvas cs:x=0.0cm,y=1.0cm) circle (1pt);
			\fill (canvas cs:x=0.0cm,y=0.4cm) circle (1pt);
			\fill (canvas cs:x=0.6cm,y=0.8cm) circle (1pt);
			\fill (canvas cs:x=0.24cm,y=0.32cm) circle (1pt);
			\fill (canvas cs:x=0.923cm,y=0.38cm) circle (1pt);
			\fill (canvas cs:x=0.368cm,y=0.15cm) circle (1pt);
			\fill (canvas cs:x=-0.6cm,y=0.8cm) circle (1pt);
			\fill (canvas cs:x=-0.24cm,y=0.32cm) circle (1pt);
			\fill (canvas cs:x=0.28cm,y=-0.96cm) circle (1pt);
			\fill (canvas cs:x=0.24cm,y=-0.32cm) circle (1pt);
			\fill (canvas cs:x=-0.923cm,y=0.38cm) circle (1pt);
			\fill (canvas cs:x=-0.369cm,y=0.15cm) circle (1pt);
			\fill (canvas cs:x=0.923cm,y=-0.38cm) circle (1pt);
			\fill (canvas cs:x=0.368cm,y=-0.15cm) circle (1pt);
			\fill (canvas cs:x=-0.96cm,y=-0.28cm) circle (1pt);
			\fill (canvas cs:x=-0.38cm,y=-0.923cm) circle (1pt);
			\fill (canvas cs:x=-0.384cm,y=-0.112cm) circle (1pt);
			\fill (canvas cs:x=-0.15cm,y=-0.369cm) circle (1pt);

			\draw  (0.0,1.0) .. controls (0.3,0.65) .. (0.24,0.32) ;
			\draw  (-0.6,0.8) .. controls (-0.1,0.65) ..  (0.0,0.4) ;
			\draw  (0.6,0.8) .. controls (0.65,0.4) .. (0.369,0.15) ;
			\draw  (0.923,0.38) .. controls (0.72,-0.15) .. (0.369,-0.15) ;
			
			\draw  (-0.923,0.38) .. controls (-0.48,0.5) .. (-0.24,0.32) ;
			\draw  (-0.38,-0.923) .. controls (-0.5,-0.5) .. (-0.384,-0.112) ;
			\draw  (0.923,-0.38)  .. controls (0.5,-0.5) .. (0.24,-0.32) ;
			\draw  (0.28,-0.96) .. controls (0,-0.8) .. (-0.15,-0.369) ;

		\end{tikzpicture}
		
	\end{minipage}
	\centering
	\caption{ Top: the affine braid generators $\sigma_i$, Bottom: the cyclic permutation $\rho$,}
\end{figure}
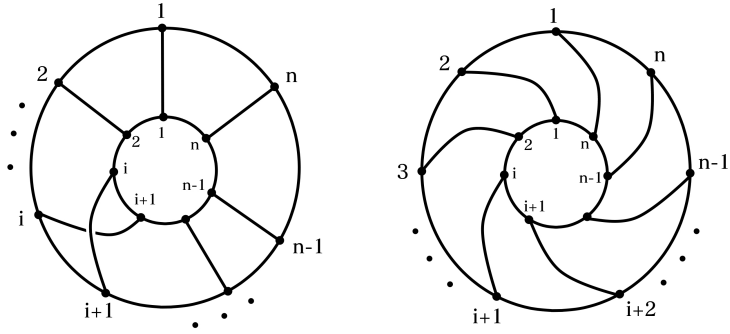

The central element of $\hat{B}_{\tilde{A}_n}$ is infinitely cyclic generated by $\rho^n$. We denote the subgroup, the braid group $B_{\tilde{A}_n}$ of affine type $A$ generated by $\sigma_1,\cdots,\sigma_n$ satisfying relations (2.1) and (2.2).
\subsection{Bigraded algebras of the affine type $A$ quiver}
To prove the faithful categorical action on the derived category of affine type $A$ quiver algebras , we introduce a differential graded algebra following Seidel-Thomas \cite{ST}. 
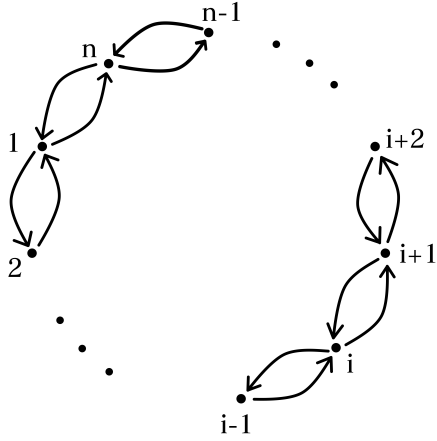
\begin{figure}
	
	\tikzstyle{pre}=[<-,shorten <=1pt,>=stealth',thick]
	\tikzstyle{post}=[<-,shorten <=1pt,>=stealth',thick]
	\tikzstyle{dot}=[circle,draw=black!100,fill=black!100,thick,
	inner sep=0pt,minimum size=2mm]
	\tikzstyle{sdot}=[circle,draw=black!100,fill=black!100,thick,
	inner sep=0pt,minimum size=0.8mm]

	\begin{center}	
		\begin{tikzpicture}[scale=2.5]
			\node (1) at (-0.923,0.38) [dot,draw,label=150:$1$] {};
			\node (2) at (-0.96,-0.28) [dot,draw,label=183:$2$] {};
			\node (i+2) at ( 0.96,0.28) [dot] {};
			\node (i-1) at ( 0,-1) [dot] {};
			\node (i) at (0.6,-0.8) [dot] {};
			\node (i+1) at (0.923,-0.38) [dot] {};
			\node[dot,draw,label=100:$n$] (n) at (-0.6,0.8)  {};
			\node[dot,draw,label=90:$n-1$] (n-1) at (0,1)  {}
			edge[pre,<-,bend right=45](n);
			\node[dot] (n) at (-0.6,0.8)  {}
			edge[post,<-,bend right=45](n-1)
			edge[post,<-,bend right=45](1);
			\node (1) at (-0.923,0.38) [dot] {}
			edge[post,<-,bend right=45](n)	
			edge[post,<-,bend right=45](2);	
			\node (2) at (-0.96,-0.28) [dot] {} 
			edge[post,<-,bend right=45](1);	
			\node (i+2) at ( 0.96,0.28) [dot,draw,label=0:$i+2$] {}
			edge[post,<-,bend right=45](i+1);	
			\node (i+1) at (0.923,-0.38) [dot,draw,label=315:$i+1$] {}
			edge[post,<-,bend right=45](i+2)
			edge[post,<-,bend right=45](i);
			\node (i) at (0.6,-0.8) [dot,draw,label=300:$i$] {}
			edge[post,<-,bend right=45](i+1)
			edge[post,<-,bend right=45](i-1);
			\node (i-1) at ( 0,-1) [dot,draw,label=268:$i-1$] {}
			edge[post,<-,bend right=45](i);
			
			\node (3) at (-0.38,-0.923) [sdot] {};
			\node (4) at (-0.8,-0.6) [sdot] {};
			\node (5) at (-0.6,-0.8) [sdot] {};
			\node (6) at (0.38,0.923) [sdot] {};
			\node (7) at (0.8,0.6) [sdot] {};
			\node (8) at (0.6,0.8) [sdot] {};
		\end{tikzpicture}	
	\end{center}
	\caption{The affine type zigzag quiver}	
\end{figure}

Consider the cyclic quiver $\Gamma_n$ associated with the affine type $A$ quiver which has the form in figure 2.
Denote by $(i_1|\cdots|i_m)$ the path starting at the vertex $(i_1)$ and ending at the vertex $(i_m)$ and $(i)$ be the path of length zero at vertex $i$.
Take $\mathbb{K}^{\oplus n}$ with generators $(1),\cdots,(n)$ as the ground ring. 
Denote by $R_n:=R_{n,N}$ the differential graded of the path algebra of $\Gamma_n$ quotient by the relations
$(i|i-1|i)=(i|i+1|i)$, $(i-1|i|i+1)=(i+1|i|i-1)=0$ for $n=1,\cdots,n$ modulo $n$. 
We assign two gradings to the path of $R_{n}$. 

\begin{itemize}
	\item The first grading is defined as $\mathrm{deg}(i)=0$, $\mathrm{deg}(i|i+1)=d_i$, $\mathrm{deg}(i+1|i)=N-d_i$, where $d_i=\frac{1}{2}N$ if $N$ is even and $d_i=\frac{1}{2}(N+(-1)^i)$ if $N$ is odd.
	\item The second grading is $\mathrm{deg}(1|n)=1$, $\mathrm{deg}(n|1)=-1$ while the degree of other elements is zero. We denote by $\langle-\rangle$ a shift in this grading.
\end{itemize}
We denote by $_{i}P=R_{n}(i)$ the indecomposable left projective module and by $P_i=(i)R_{n}$ the indecomposable right projective module and by $D(\operatorname{\mathit{R}_n-mod})$ the dervied category of differential graded modules over $R_n$

Let $S_1,\cdots, S_n$ be simple $\mathbb{K}\Gamma_N\tilde{A}_n$ modules which are $N$-spherical objects in $D(\tilde{A}_{n,N})$ and $S=\oplus_{i=1}^{n}S_i$ be their direct sum.
The chain complex of endomorphisms
\begin{equation*}
	end(S):=hom(S,S)=\bigoplus_{1\leq i,j\leq n} hom(S_i,S_j)
\end{equation*}
is a differential graded algebra. It has identity $id_{S_i}$ and the composition of homomorphisms gives the structure of multiplication of $end(S)$. The left multiplication with $id_{S_i}$ projects to $hom(S,S_i)$ and right multiplication with $id_{S_i}$ is $hom(S_i,S)$. The functor $hom(S,-):D(\tilde{A}_{n,N})\rightarrow K(end(S))$ maps an object $S_i$ to the dgm $hom(S,S_i)$ which is the indecomposable right projective module $P_i$. 

The graded algebra $R_n$ is related to the quiver algebra $\mathbb{K}\Gamma_N\tilde{A}_n$ by the following.  

\begin{lem}\label{coho}
	Suppose that the one-dimensional space $\mathrm{Hom}^{*}(S_{i+1},S_i)$ is concentrated in degree $d_i$ for each $i=1,\cdots,n$, then the cohomology algebra of the dga $end(S)$ is isomorphic to $R_n$.
\end{lem}
\begin{proof}
	Each $S_i$ are $N$-spherical objects, we have the non-degenerate pairings:
	\begin{align*}
		&\mathrm{Hom}^*(S_i,S_{i+1})\otimes \mathrm{Hom}^*(S_{i+1},S_i)\simeq \mathbb{K} \\
		&\mathrm{Hom}^*(S_{i+1},S_{i})\otimes \mathrm{Hom}^*(S_{i+1},S_i)\simeq \mathbb{K}
	\end{align*} 
	for $i=1,\cdots,n$. In particular, $\mathrm{Hom}^*(S_n,S_1)\otimes \mathrm{Hom}^*(S_1,S_n)\simeq \mathbb{K}$.
	The pairings are non-degenerate so $\mathrm{Hom}^*(S_i,S_{i+1})\simeq \mathbb{K}$ is concentrated in degree $N-d_i$. Without loss of generality, we have the equation
	$\alpha_i\beta_i=\beta_{i-1}\alpha_{i-1}$ for $\alpha_i\in \mathrm{Hom}^*(S_{i+1},S_i)$, $\beta_i\in \mathrm{Hom}^*(S_i,S_{i+1})$. Because $\mathrm{Hom}^*(S_i,S_j)=0$ for $|i-j|\geq 2$ we also have $\beta_i\beta_{i-1}=0$ and $\alpha_{i-1}\alpha_i=0$ for all $i=1,\cdots,n$.
	
	So there exists an isomorphism of $R_n\rightarrow \mathrm{Hom}^*(S,S)$ which maps $(i)$ to $id_{S_i}$, $(i|i+1)$ to $\alpha_i$ and $(i+1|i)$ to $\beta_i$.
\end{proof}
We define two complexes of $R_n$ for all $i=1,\cdots,n$:
\begin{align*}
	&T_i:0\rightarrow{}_i\mathit{P}\otimes {}P_i\xrightarrow{d_i}R_n\rightarrow 0\\
	&T'_i:0\xrightarrow{d'_i}R_n\rightarrow {}_i\mathit{P}\otimes {}P_i\{-1\}\xrightarrow{d_i}0
\end{align*}
with $R_{n}$ in degree $0$ and diffrentials
\begin{align*}
	d_i((i)\otimes(i))&=(i)\\
	d'_i((i)&=(i-1|i)\times(i|i-1)++(i+1|i)\otimes(i|i+1)\\&+(i)\otimes(i|i-1|i)+(i|i-1|i)\otimes(i)
\end{align*}
We consider the functors $\mathcal{T}_i=T_i\otimes_{R_n}-$ and $\mathcal{T'}_i=T'_i\otimes_{R_n}-$ which are are mutually inverse equivalence of categories \cite[Lemma 4.11]{KhSe}.

In the following, we construct a functor corresponding to the $\frac{1}{n}$ twist $\rho$ and a functor corresponding to the inverse operaton $\rho^{-1}$. We introduce an additional automorphism of ring $R_{n}$ as following. Define a bigraded $R_{n}$-module $\mathcal{R}_n^{\rho}$ to be $_1P\langle-1\rangle\oplus {}_2P\oplus \cdots\oplus _nP$.
Let $t_{\rho}$ be the automorphism of the ring $R_{n}$ sending a path $(i_1|i_2|\cdots|i_k)$ to $(i_1+1|i_2+1|\cdots|i_k+1)$. 

The left action of $R_{n}$ on $\mathcal{R}_n^{\rho}$ is the usual multiplication $a\cdot m=am$ for $a\in R_{n}, m\in \mathcal{R}_n^{\rho}$. While the right action of $R_{n}$ on $\mathcal{R}_n^{\rho}$ is the multiplication twisted by $t_{\rho}$,
$q\cdot a=qt_{\rho}(a)$ for $a\in R_{n}, q\in \mathcal{R}_n^{\rho}$. To distinguish the usual tensor product, we denote twisted tensor product $\hat{\otimes}$ for the bimodule $\mathcal{R}_n^{\rho}$ tensoring with the dgm with respect to the twisted multiplication defined above.

We also consider a bigraded $R_{n}$-module $^{\rho}\mathcal{R}_n$ to be $P_{1}\langle1\rangle\oplus P_{2}\oplus \cdots\oplus P_{n}$. 
The right action of $R_{n}$ on $^{\rho}\mathcal{R}_n$ is the usual multiplication while the left action of $R_{n}$ on $T_{\rho}$ is the multiplication twisted by $t_{\rho}$.

Consider two complexes of $R_{n}$-bimodule of length zero concentrated in cohomological degree zero:
\begin{align*}
	&\Omega_\rho:0\rightarrow \mathcal{R}_n^{\rho}\rightarrow 0.\\
	&\Omega'_\rho:0\rightarrow \mathcal{^{\rho}R}_n \rightarrow 0.
\end{align*}
Then we define the corresponding functors $\mathcal{T}_{\rho}=\Omega_\rho\hat{\otimes}_{R_{n}}-$ and $\mathcal{T}'_{\rho}=\Omega'_{\rho}\hat{\otimes}_{R_{n}}-$.

\begin{prop} 
	The functors $\mathcal{T}_{i}$ and $\mathcal{T}_{\rho}$ on $D(\operatorname{\mathit{R}_n-mod})$ are exact equivalences and satisfy the extended affine braid group relations (up to graded natural isomorphism).
	\begin{align*}
		\mathcal{T}_{i}\mathcal{T}_{i+1}\mathcal{T}_{i}&\simeq \mathcal{T}_{i+1}\mathcal{T}_{i}\mathcal{T}_{i+1}\quad\,\,\text{for}\quad i=1,\cdots,n \\
		\mathcal{T}_{i}\mathcal{T}_{j}&\simeq \mathcal{T}_{j}\mathcal{T}_{i}\quad\quad\quad\quad  \text{for}\quad|i-j|\geq 2 \\
		\mathcal{T}_{\rho}\mathcal{T}_{i}\mathcal{T}'_{\rho}&\simeq \mathcal{T}_{i+1}\quad\quad\quad\quad\text{for}\quad i=1,\cdots,n .
	\end{align*}
\end{prop}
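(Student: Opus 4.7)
My plan is to verify each of the three stated relations by working at the level of $R_n$-bimodule complexes representing the functors, adapting the Khovanov--Seidel--Thomas strategy to the cyclic (affine) setting. Recall that $T_i$ is the cone of the evaluation ${_iP}\otimes_k P_i\to R_n$ and that $F_\rho$, $F'_\rho$ are tensor products with the length-zero complexes $\Omega_\rho$, $\Omega'_\rho$; every composition appearing in the proposition is then an iterated bimodule tensor product which I would resolve and compare on both sides.

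For the distant commutation $T_iT_j\simeq T_jT_i$ with $|i-j|\geq 2$, the key observation is that vertices $i$ and $j$ are non-adjacent in $\Gamma_n$, so $P_i\otimes_{R_n}{_jP}=(i)R_n(j)=0$ and symmetrically $P_j\otimes_{R_n}{_iP}=0$. The mixed cross-terms in both triple tensor products vanish and both sides collapse to the same bi-cone, yielding the isomorphism formally. For the braid relation $T_iT_{i+1}T_i\simeq T_{i+1}T_iT_{i+1}$ I would follow \cite{ST} closely: both iterated cones admit a common sub-bimodule built from paths of length $\leq 2$ supported on $\{i,i+1\}$, and the defining relations $(i|i{-}1|i)=(i|i{+}1|i)$ and $(i{-}1|i|i{+}1)=0$ of $R_n$ enter exactly as the analogous zigzag relations do in the finite-type $A$ case, so the explicit chain homotopy that exhibits both sides as extensions of this common sub-bimodule by the same quotient transports verbatim.

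For the cyclic conjugation $F_\rho T_i F'_\rho \simeq T_{i+1}$ I would combine Lemma~\ref{iso} with its right-handed analogue. The functor $F_\rho$ sends ${_iP}\mapsto {_{i+1}P}$ for $i\neq n$ and ${_nP}\mapsto {_1P}\langle -1\rangle$, while $F'_\rho$ implements the inverse shift with compensating bigrading. Conjugating the defining cone of $T_i$,
\[
\Omega_\rho\,\hat\otimes_{R_n}\,\bigl\{{_iP}\otimes_k P_i \to R_n\bigr\}\,\hat\otimes_{R_n}\,\Omega'_\rho,
\]
and using that $F_\rho\circ F'_\rho\simeq \mathrm{id}$ on $R_n$, produces the cone defining $T_{i+1}$; in the wrap-around case $i=n$ the two $\langle \pm 1\rangle$ shifts coming from $T_\rho$ and $T'_\rho$ cancel, so no extra case distinction remains.

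The step I expect to be the main obstacle is the braid relation. The distant commutation and the cyclic relation reduce to essentially formal manipulations once the appropriate bimodule identities are in place, whereas the braid relation requires an explicit quasi-isomorphism between two nontrivial iterated cones with careful sign and bigrading bookkeeping. However, because all relevant interactions occur along length-$\leq 2$ paths through $\{i,i+1\}$, which see only the local structure of $\Gamma_n$, the affine character of the quiver does not interfere; the cyclic geometry enters only through the rotation $\rho$, handled separately above.
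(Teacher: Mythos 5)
Your outline is correct and matches the argument the paper is implicitly relying on: the paper states this proposition without proof, deferring to the bimodule-level computations of \cite{ST} and \cite{GTW}, and your three steps (vanishing of $(i)R_n(j)$ for non-adjacent $i,j$, the local zigzag-relation computation for the braid relation, and conjugation of the twist bimodule by $\Omega_\rho$ using the isomorphisms of Lemma~\ref{iso} with the cancelling $\langle\pm1\rangle$ shifts) reconstruct exactly that proof. No substantive gap beyond the bigrading bookkeeping you already flag.
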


\begin{proof}
	
	The first and second isomorphisms are the result of theorem 4.12 in \cite{ST} in the finite type $A$ case. The third isomorphism which only exists in the affine type $A$ case can be deduced by the properties of functors $\mathcal{T}_{\rho}$ and $\mathcal{T}'_{\rho}$. By direct computations,	we have the following isomorphisms of trigraded left $R_{n}$-modules
	\begin{equation*}
		\mathcal{R}_n^{\rho}\hat{\otimes}_{R_{n}} {}_iP\simeq {}_{i+1}P 
	\end{equation*}
	for all $i=1,\cdots,n-1$, and 
	\begin{equation}\label{shift}
		\mathcal{R}_n^{\rho}\hat{\otimes}_{R_{n}}{}_{n}P\simeq {}_1P\langle-1\rangle.
	\end{equation}
	and of trigarded right $R_{n}$-modules
	
	\begin{equation*}
		_i P\hat{\otimes}_{R_{n}}\mathcal{^{\rho}R}_n\simeq {}_{i+1}P 
	\end{equation*}
	for all $i=1,\cdots,n-1$, and 
	\begin{equation*}
		_nP\hat{\otimes}_{R_{n}}\mathcal{^{\rho}R}_n\simeq {}_1P\langle 1\rangle.
	\end{equation*}
	Combining these, we can see that 
	\begin{equation*}
		\mathcal{R}_n^{\rho}\hat{\otimes}_{R_{n}}{}_i P\otimes P_{i}\hat{\otimes}_{R_{n}}\mathcal{^{\rho}R}_n	\simeq {}_{i+1}P\otimes P_{i+1}
	\end{equation*}
	for all $i=1,\cdots,n$. This impies the third isomorphism.
\end{proof}

We consider the weak braid group action $\Phi_{n,N}: \hat{B}_{\tilde{A}_n}\rightarrow \mathrm{Auteq}(D(\operatorname{\mathit{R}_n-mod}))$ by sending $\sigma_1,\cdots\sigma_n,\rho$ to $\mathcal{T}_{1},\cdots,\mathcal{T}_{n},\mathcal{T}_{\rho}$ which is a group homomorphism by the above proposition.

\subsection{Geometric intersection numbers}

The method to prove the injectivity of categorical action is to relate the topology of curves on the surfaces. We recall some results and fix notations. Let $\mathbb{D}^*$ be a disc with a punctured point at the center which is equivalent to
an annulus contracting the interior boundary to a point and $\Delta$ be a set of $n$ distinct points. Denote by $\mathrm{MCG}(\mathbb{D}^*;\Delta)$ the mapping class group of oriented preserving homeomorphisms fixing the boundary and n+1 points.
A curve $c$ is the image of a smooth embedding of an interval into $\mathbb{D}^*$ such that the starting point and the end point both lie at points of $\Delta$.
For any curves $c_0, c_1$ which have minimal intersection number and do not form an empty bigon, we can define a geometric intersection number:
\begin{equation*}
	I(c_0,c_1)=|(c_0\cap c_1)\backslash\Delta|+\frac{1}{2}|(c_0\cap c_1)\cap \Delta|.
\end{equation*}
If $c_0$ and $c_1$ do not have minimal intersection, we can replace one of them to a isotopic curve such that they have minimal intersection. Consider a collection of curves $b_1,\cdots, b_n$ as in figure 3. We quote an important result from \cite[Lemma 4.2]{GTW}.

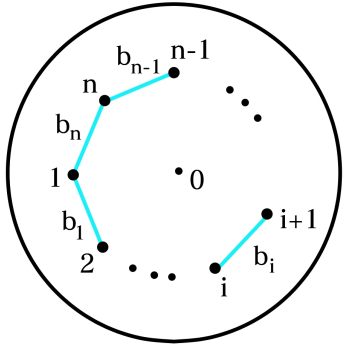
\begin{figure}
	\tikzstyle{pt}=[circle,draw=black!100,fill=black!100,thick,
	inner sep=0.1pt,minimum size=1.8mm]
	\tikzstyle{ssdot}=[circle,draw=black!100,fill=black!100,thick,
	inner sep=0pt,minimum size=0.5mm]
	\begin{center}	
		\begin{tikzpicture}[scale=2.5]
			(0,0) -- (3mm,0mm) arc (0:30:3mm) -- cycle;
			\draw (0,0) circle (1cm);	
			\node (0)[pt,draw,label=100:$0$] at (0,0) [pt] {};
			\node (n)[pt,draw,label=100:$n$] at (-0.36,0.48) [pt] {};
			\node[pt,draw,label=90:$n-1$]  (n-1) at (0.0,0.6) [pt] {};
			\node[pt,draw,label=180:$1$] (1) at (-0.5538,0.228) [pt] {};
			\node[pt,draw,label=180:$2$] (2) at (-0.576,-0.168) [pt] {};
			\node[pt,draw,label=360:$i+1$] (i+1) at (0.5538,-0.228) [pt] {};
			\node[pt,draw,label=270:$i$]  (i) at (0.36,-0.48) [pt] {};
			\draw[very thick,orange] (n-1)--(n);	
			\draw[very thick,orange] (1)--(n);
			\draw[very thick,orange] (1)--(2);
			\draw[very thick,orange] (i+1)--(i);
			\node[ssdot] (3) at (-0.228,-0.5538)  {};
			\node[ssdot] (4) at (-0.48,-0.36) {};
			\node[ssdot] (5) at (-0.36,-0.48)  {};
			\node[ssdot] (6) at (0.228,0.5538)  {};
			\node[ssdot] (7) at (0.48,0.36)  {};
			\node[ssdot] (8) at (0.36,0.48)  {};
		\end{tikzpicture}			
	\end{center}	
	\captionof{figure}{The affine configuration of the n+1-pinctured disk with curves connecting punctured points}
\end{figure}

\begin{lem}\label{mapping}
	If a mapping class $f\in \mathrm{MCG}(\mathbb{D}^*,\Delta)$ satisfying $I(b_j,f(b_i))=I(b_j,b_i)$ for all $i,j=1,\cdots,n$, then $f$ is in the center of $\mathrm{MCG}(\mathbb{D}^*,\Delta)$,  $f=\rho^{n\nu}$ for some $\nu\in \mathbb{Z}$.
\end{lem}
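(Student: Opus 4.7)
The plan is to show that preservation of intersection numbers with the reference arcs $b_1,\ldots,b_n$ already pins down $f$ up to the centre, by first determining each image arc $f(b_i)$ and then reassembling $f$ via a rigidity argument.

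The key technical input is a curve-classification lemma: an isotopy class of arc $c$ in $(\mathbb{D}^*,\Delta)$ with both endpoints in $\Delta$ is determined by its intersection vector $\bigl(I(b_j,c)\bigr)_{j=1}^n$, up to the action of the central subgroup $\langle\rho^n\rangle$. I would prove this by lifting to the universal cover of $\mathbb{D}^*\setminus\Delta$: the preimages of the $b_j$ form a locally finite family, and the intersection sequence of a lifted arc determines its lift up to the deck transformation generated by the full rotation $\rho^n$, which is exactly the freedom of how many times the arc winds around the central puncture. Equivalently, one can argue by bigon reduction, using that the $b_j$ cut $\mathbb{D}^*$ into simply connected pieces plus an annular neighbourhood of the puncture, so that a normal form for $c$ is determined by the crossing pattern with the $b_j$ together with the winding number.

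Granted this lemma, the hypothesis $I(b_j,f(b_i))=I(b_j,b_i)$ gives $f(b_i)=\rho^{n k_i}(b_i)$ for some $k_i\in\mathbb{Z}$. Since $f$ is a homeomorphism and consecutive arcs $b_i, b_{i+1}$ share an endpoint in $\Delta$, so do their images; a comparison of the winding data of the two germs at that shared endpoint forces $k_i=k_{i+1}$, so the $k_i$ coincide with a single value $\nu\in\mathbb{Z}$. Finally, the family $\{b_i\}$ fills $\mathbb{D}^*$ in the sense that its complement is a disjoint union of discs each meeting at most one point of $\Delta$ or the puncture. The Alexander method for surface homeomorphisms then implies that any mapping class fixing every $b_i$ up to isotopy is trivial in $\mathrm{MCG}(\mathbb{D}^*,\Delta)$. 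Applying this to $\rho^{-n\nu}\circ f$ yields $f=\rho^{n\nu}$, which lies in the centre.

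The hard part will be the curve-classification lemma: one must show that the ambiguity in reconstructing an arc from its intersection vector with $\{b_j\}$ is exactly $\langle\rho^n\rangle$, neither larger nor smaller. Identifying this ambiguity with the centre of $\mathrm{MCG}(\mathbb{D}^*,\Delta)$ is where the specific affine configuration of the $b_j$ (as in Figure~3) is essential; the remaining steps are then a combinatorial matching of endpoints and a standard Alexander-method rigidity argument.
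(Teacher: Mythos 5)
The paper does not actually prove this lemma; it is quoted from Gadbled--Thiel--Wagner \cite{GTW}, whose argument follows the Khovanov--Seidel strategy. Your proposal contains a genuine gap in its key technical step. In the affine configuration the arcs $b_1,\dots,b_n$ form a closed chain encircling the central puncture, and $\rho^n$ is isotopic to the Dehn twist about a boundary-parallel curve; in particular $\rho^{n\nu}$ fixes the isotopy class of \emph{every} arc disjoint from the boundary, so ``determined up to the action of $\langle\rho^n\rangle$'' would mean ``determined outright''. But the intersection vector $\bigl(I(b_j,c)\bigr)_{j=1}^n$ does not determine $c$: the arc joining the endpoints $i$ and $i+1$ of $b_i$ that runs through the inner region on the far side of the puncture, and the arcs joining $i$ and $i+1$ that run through the outer annular region with arbitrary winding, all have interiors disjoint from $\bigcup_j b_j$ and hence realize exactly the same intersection vector as $b_i$, while being pairwise non-isotopic. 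So your curve-classification lemma is false, the deduction $f(b_i)=\rho^{nk_i}(b_i)$ does not follow, and the subsequent ``comparison of winding data'' is vacuous because $\rho^{nk}(b_i)=b_i$ for every $k$. (Your universal-cover heuristic conflates the deck transformation corresponding to the loop around the central puncture with the mapping class $\rho^n$; these are unrelated, and the former is precisely the ambiguity your lemma fails to control.)

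What saves the actual statement is that the hypothesis must hold for all $i$ simultaneously for a single homeomorphism $f$: the images $f(b_1),\dots,f(b_n)$ must reassemble into an embedded closed chain that still encircles the puncture (since $f$ fixes the puncture and the boundary), and this global constraint is what rules out the far-side and winding arcs. Only after one has shown $f(b_i)\simeq b_i$ for every $i$ does an Alexander-type rigidity argument apply; and note that the complement of $\bigcup_i b_i$ is an inner disc together with an outer \emph{annulus}, not a union of discs each meeting at most one special point as you assert, which is exactly why the kernel of the action on the $b_i$ is the infinite cyclic group $\langle\rho^n\rangle$ generated by the boundary twist rather than trivial. The residual exponent $\nu$ in the conclusion comes from this annular complementary region, not from per-arc winding numbers.
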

\subsection{Trigraded zig-zag algebras}

We recall the definition of the trigraded zig-zag algebra for the affine type $A$ quiver defined in \cite{GTW}. The cyclic quiver $\overline{\Gamma}_n$ of the affine type $A$ is of the form in figure 2.
We denote the path of length $m$ in $\overline{\Gamma}_n$ by $(\overline{i_0|\cdots|i_m})$ for $m$ tuples of numbers $i_1,\cdots,i_m\in \{1,\cdots,n\}$. The quotient ring $\overline{R}_n$ is a finite dimensional trigraded algebra of the path algebra $\mathbb{K}[\overline{\Gamma}_n]$ quotient by the relations: 
\begin{align*}
	(\overline{i|i+1|i})&=(\overline{i|i-1|i})\quad \text{for}\,\,i=1,\cdots,n \\
	(\overline{i-1|i|i+1})&=(\overline{i+1|i|i-1})=0\quad \text{for}\,\,i=1,\cdots,n
\end{align*}
The algebra $\overline{R}_n$ is similar to $R_n$ but with different gradings.

\begin{itemize}
	
	\item  The first grading is set to be $\mathrm{deg}(\overline{i|i+1})=\mathrm{deg}(\overline{i})=0$ and $\mathrm{deg}(\overline{i|i-1})=1$ for all $i$.
	\item  The second grading is the path length grading.
	\item The third grading is $\mathrm{deg}'(\overline{1|n})=1$, $\mathrm{deg}'(\overline{n|1})=-1$ while the degree of other elements is zero.
	
\end{itemize}
We denote by $\{-\}$ a shift in the first grading, by $(-)$ a shift in the second and by $\langle-\rangle$ a shift in the third.

Let $\operatorname{\overline{\mathit{R}}_n-mod}$ be the abelian category of finitely-generated graded right module over $\operatorname{\overline{\mathit{R}}_n}$
and $D^{b}(\operatorname{\overline{\mathit{R}}_n-mod})$ be its bounded derived category. 
Define a trigraded $R_n$-module $\overline{T}_{\rho}$ to be $\overline{P}_1\langle-1\rangle\oplus \overline{P}_2\oplus \cdots\oplus \overline{P}_n$. The automorphism $\{1\}$ shifting the grading of a module up by one  descends to an automorphism of $D^{b}(\operatorname{\overline{\mathit{R}}_n-mod})$.
For any objects $X,Y\in D^{b}(\operatorname{\overline{\mathit{R}}_n-mod})$ there is a trigraded vector space 
\begin{equation*}
	\bigoplus_{r_1,r_2,r_3} \mathrm{Hom}_{D^{b}(\operatorname{\overline{\mathit{R}}_n-mod})}(X,Y\{r_1\}[r_2]\langle r_3\rangle),
\end{equation*}
where $[-]$ is a shift in the cohomology grading.
Let $\overline{\mathit{P}}_i=(\overline{i})\overline{R}_n$ be the indecomposable right projective modules which are generated by the paths with left most vertex be $i$ for $1\leq i \leq n$. 
Let $\operatorname{\overline{\mathit{R}}_n-proj}$ be the full subcategories whose objects are direct sum of $\overline{\mathit{P}}_i\{r\}$ for $1,\cdots,n$ and $r\in\mathbb{Z}$.

Let $\overline{t}_{\rho}$ be the automorphism of the ring $\overline{R}_{n}$ sending a path $(\overline{i_1|i_2|\cdots|i_k})$ to $(\overline{i_1+1|i_2+1|\cdots|i_k+1})$. The left action of $\overline{R}_{n}$ on $\overline{T}_{\rho}$ is the ususal multiplication $r\cdot a=ra$ for $r\in \overline{R}_{n}, a\in \overline{T}_{\rho}$. While the right action of $\overline{R}_n$ on $T_{\rho}$ is the multiplication twisted by $\overline{t}_{\rho}$, defined by $b\cdot r=b\overline{t}_{\rho}(r)$ for $r\in \overline{R}_n, b\in \overline{T}_{\rho}$.

Define exact functors $\mathcal{F}_1,\cdots,\mathcal{F}_n,\mathcal{F}_{\rho}$ from $D^{b}(\operatorname{\overline{\mathit{R}}_n-mod})$ to itself by
\begin{equation*}
	\mathcal{F}_{i}(\mathit{X})=\big\{\mathit{X}\otimes_{\overline{R}_n}{}_i\overline{\mathit{P}}\otimes_{k}\overline{\mathit{P}}_i\longrightarrow X\big\}\quad\text{for $\mathit{i}$\,=\,1$\cdots$ n},
\end{equation*}	
and
\begin{equation*}
	\mathcal{F}_{\rho}(X)=\big\{T_{\rho}\otimes_{\overline{R}_n} X\big\}.
\end{equation*}
Those functors $\mathcal{F}_1,\cdots,\mathcal{F}_n,\mathcal{F}_{\rho}$ and their inverse functor are exact equivalences and generate a weak braid group action $\bar{\Phi}_n:\hat{B}_{\tilde{A}_n}\rightarrow \mathrm{Autoeq}(D^b(\operatorname{\overline{\mathit{R}}_n-mod}))$ \cite[Remark 3.5]{GTW}. The notation is different  
from \cite{GTW} (our $\overline{R}_{n}$ and $\overline{\mathit{P}}_i$ are $R_n$ and $_iP$ of their paper).

\subsection{The faithfulness of the categorical action}

Following the idea of Seidel and Thomas \cite{ST}, we prove the faithfulness of the weak categorical action on the Calabi-Yau-$N$ category of the quiver algebra by transferring from the faithfulness result of the zig-zag quiver algebra by Gadbled, Thiel and Wagner \cite{GTW}.
\begin{lem}\cite[Lemma 5.7]{GTW} For $g\in\hat{B}_{\tilde{A}_n}$,  let $f\in \mathrm{MCG}(\mathbb{D}^*;\Delta)$ be a mapping class corresponding to $g$, and $\overline{L}_{n}^{g}$ represents $\bar{\Psi}_n(g)$. Then
	\begin{equation}\label{gin}
		\sum_{(r_1,r_2,r_3) \in \mathbb{Z}^3}\mathrm{dim}_{k}\mathrm{Hom}_{D^{b}(\operatorname{\overline{\mathit{R}}_n-mod})}(\overline{P}_i,\overline{L}_{n}^{g}(\overline{P}_j))[r_1]\{-r_2\}\langle-r_3\rangle)=2I(b_i,f(b
		_j))
	\end{equation}
	for all $1\leq i,j\leq n$. 
\end{lem}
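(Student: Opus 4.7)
The plan is to adapt the Khovanov--Seidel--Thomas approach, as refined for affine type $A$ by Gadbled--Thiel--Wagner, by extending the matching from the base objects $\overline{P}_j$ to all admissible curves. Concretely, I would attach to every admissible curve $c$ in $(\mathbb{D}^*,\Delta)$ a bigraded complex $L(c)\in D^b(\overline{R}_n\text{-mod})$ satisfying: (a) $L(b_j)=\overline{P}_j$; (b) the equivariance $L(f(c))\simeq \bar{\Phi}_n(g)(L(c))$ whenever $g\in \hat{B}_{\tilde{A}_n}$ is represented by $f\in \mathrm{MCG}(\mathbb{D}^*,\Delta)$; and (c) the graded intersection formula
\[
2I(c,c')\;=\;\sum_{(r_1,r_2,r_3)\in\mathbb{Z}^3}\dim_k \mathrm{Hom}_{D^b(\overline{R}_n\text{-mod})}\bigl(L(c),L(c')[r_1]\{-r_2\}\langle -r_3\rangle\bigr)
\]
for any pair of curves in minimal position. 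Once (a)--(c) are in place, the lemma is immediate on taking $c=b_i$ and $c'=f(b_j)$, since then $L(f(b_j))\simeq \bar{\Phi}_n(g)(\overline{P}_j)=\overline{L}_n^g(\overline{P}_j)$.

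For (b) it suffices to check the statement on the generators $\sigma_i,\rho$ of $\hat{B}_{\tilde{A}_n}$. The compatibility $L(\tau_{b_i}(c))\simeq \mathcal{F}_i(L(c))$ is the standard spherical-twist/Dehn-twist duality: the defining triangle of $\mathcal{F}_i$ is the algebraic avatar of surgery along the intersection set $c\cap b_i$ that produces $\tau_{b_i}(c)$. The compatibility $L(\rho(c))\simeq \mathcal{F}_\rho(L(c))$ is built into the construction of $\mathcal{F}_\rho$: the ring automorphism $\overline{t}_\rho$ realizes the rotation relabelling, while the $\langle -1\rangle$ shift on the $\overline{P}_1$ summand of $\overline{T}_\rho$ records the winding incurred when an arc is pushed across the distinguished edge from vertex $n$ to vertex $1$.

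For (c) one first checks the base case $c,c'\in\{b_1,\dots,b_n\}$ by direct computation of the trigraded space $\mathrm{Hom}^{\bullet,\bullet,\bullet}(\overline{P}_i,\overline{P}_j)$ from the zig-zag relations $\overline{(i|i+1|i)}=\overline{(i|i-1|i)}$ and $\overline{(i-1|i|i+1)}=\overline{(i+1|i|i-1)}=0$; the total dimension comes out to $2I(b_i,b_j)$, with the homological, length and winding gradings matching $r_1,r_2,r_3$ respectively (namely $2$ when $i=j$, $1$ when $|i-j|=1$ modulo $n$, and $0$ otherwise). The general case proceeds by induction on the length of a braid word for $f$: moving one generator at a time across the $\mathrm{Hom}$ via (b), the change on the geometric side (a single Dehn twist read off from the local picture near $b_i$, or a single rotation) matches the change on the homological side (the triangle defining $\mathcal{F}_i$, or the automorphism behind $\mathcal{F}_\rho$).

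The main obstacle is tracking the second internal grading $\langle -\rangle$ consistently throughout, since this grading is absent from the original Khovanov--Seidel setting on the unpunctured disk. Geometrically it records winding around the central puncture of $\mathbb{D}^*$; categorically it is governed by $\overline{t}_\rho$. The asymmetric convention $\mathrm{deg}_2\overline{(n|1)}=1$, $\mathrm{deg}_2\overline{(1|n)}=-1$ is set up precisely to make the winding counts on both sides agree, and verifying this compatibility every time a twist is applied---so that categorical and geometric windings match at each inductive step---is the delicate part of the argument. Once this winding calculus is in place on the generators, the intersection formula (c) transports across all of $\hat{B}_{\tilde{A}_n}$ and the lemma follows.
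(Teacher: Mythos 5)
The paper gives no proof of this lemma at all: it is quoted from Gadbled--Thiel--Wagner \cite{GTW}, whose argument is the trigraded extension of the Khovanov--Seidel machinery. Your skeleton (a)--(b)--(c) --- attach an object $L(c)$ to every admissible curve, prove equivariance under the generators $\sigma_i,\rho$, and prove a graded intersection formula --- is exactly the strategy of the cited source, and your base-case computation of the trigraded space $\mathrm{Hom}(\overline{P}_i,\overline{P}_j)$ (total dimensions $2$, $1$, $0$ matching $2I(b_i,b_j)$) is correct. Your reading of the second internal grading as a winding number around the puncture, governed by $\overline{t}_\rho$ and the asymmetric degrees of $\overline{(n|1)}$, $\overline{(1|n)}$, is also the right picture.

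The genuine gap is your proposed proof of the general case of (c) by ``induction on the length of a braid word for $f$, moving one generator at a time across the $\mathrm{Hom}$.'' This induction does not close. Applying $\mathrm{Hom}(\overline{P}_i,-)$ to the exact triangle defining $\mathcal{F}_k$ produces a long exact sequence, which only yields the inequality
\begin{equation*}
\dim\mathrm{Hom}(\overline{P}_i,\mathcal{F}_k(X))\;\le\;\dim\mathrm{Hom}(\overline{P}_i,X)+\dim\mathrm{Hom}(\overline{P}_k,X)\cdot\dim\mathrm{Hom}(\overline{P}_i,\overline{P}_k),
\end{equation*}
with equality precisely when the connecting maps vanish --- which is essentially what you are trying to prove. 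Symmetrically, $I(b_i,\tau_{b_k}(c))$ is not a function of $I(b_i,c)$, $I(b_k,c)$ and $I(b_i,b_k)$; one only has an inequality of the same shape. So neither side of the identity is determined by the inductive hypothesis after one more generator, and the two inequalities cannot be combined into an equality at each step. The actual argument (Khovanov--Seidel, and its trigraded version in \cite{GTW}) sidesteps this: one puts an arbitrary admissible curve $c$ into normal form with respect to a cut system, reads off $L(c)$ explicitly as a complex of projectives whose generators of $\mathrm{hom}(\overline{P}_i,L(c))$ biject (with the factor $2$, resp.\ $1$, at interior, resp.\ marked, intersection points) with $b_i\cap c$, and checks that the differential produces no cancellation in cohomology; the lemma then follows by taking $c=f(b_j)$ and invoking your step (b). Your winding calculus is needed, but it enters in that normal-form computation, not in a braid-word induction.
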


Let $K^{b}_{f}(\operatorname{\overline{\mathit{R}}_n-proj})$ be a full subcategory of $K^b(\operatorname{\overline{\mathit{R}}_n-mod})$ whose objects are finite complexes in $\overline{P}_i$.

\begin{lem}\label{inter}
	For $g\in\hat{B}_{\tilde{A}_n}$, let $f\in \mathrm{MCG}(\mathbb{D}^*;\Delta)$ be a mapping class corresponding to $g$, and $L^g_{n,N}$ a functor which represents $\Phi_{n,N}(g)$. Then
	\begin{equation*}
		\sum_{s_1,s_2\in \mathbb{Z}}\mathrm{dim}_{k}\mathrm{Hom}_{D^b(\operatorname{\mathit{R}_n-mod})}(P_i,L^{g}_{n,N}(P_j)[s_1]\langle s_2\rangle)=2I(b_i,f(b_j))
	\end{equation*}
	for all $1\geq i,j\geq n$.
\end{lem}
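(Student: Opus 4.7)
The plan is to deduce the claim from equation \eqref{gin} by observing that both sides, when summed over all available gradings, collapse to the same ungraded $\mathrm{Hom}$ dimension. The key point is that $R_{n,N}$ and $\overline{R}_n$ differ only in how one grades the same underlying algebra, and the braid twist functors on each side are defined by the same bimodule formulas.

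First, I would verify that $R_{n,N}$ and $\overline{R}_n$ are isomorphic as ungraded $k$-algebras: both are the path algebra $k[\Gamma_n]$ of the same cyclic quiver modulo the identical quadratic relations $(i|i-1|i) = (i|i+1|i)$ and $(i{\pm}1|i|i{\mp}1) = 0$, and the difference between them lies only in the choice of first-type grading (bi- vs.\ tri-grading) on the generating paths. Call the common ungraded algebra $A$. Then for any bigraded $R_{n,N}$-module $M$ with underlying ungraded $A$-module $\mathcal{M}$ one has
\begin{equation*}
\bigoplus_{s_1,s_2 \in \mathbb{Z}} \mathrm{Hom}_{R_{n,N}}\bigl(P_i, M[s_1]\langle s_2\rangle\bigr) \;\cong\; \mathrm{Hom}_A(\mathcal{P}_i, \mathcal{M}),
\end{equation*}
and similarly for any trigraded $\overline{R}_n$-module summed over $(r_1,r_2,r_3) \in \mathbb{Z}^3$.

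Second, I would check that the weak braid actions $\Phi_{n,N}$ and $\bar{\Phi}_n$ induce the same ungraded endofunctor on $D^b(A\text{-mod})$. It suffices to verify this on the generators of $\hat{B}_{\tilde{A}_n}$: the cones defining $T_i$ and $\mathcal{F}_i$ use the same bimodule ${}_iP \otimes_k P_i$ and the same evaluation map, and the $\rho$-twist functors $F_\rho$, $\mathcal{F}_\rho$ are tensor products with the bimodules $T_\rho$ and $\overline{T}_\rho$ which, once the winding shift $\langle -1 \rangle$ on the first summand is forgotten, become literally the same $A$-bimodule $P_1 \oplus P_2 \oplus \cdots \oplus P_n$ with right $A$-action twisted by the cyclic permutation $t_\rho$.

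Combining the two steps yields
\begin{equation*}
\sum_{s_1,s_2} \dim_k \mathrm{Hom}_{D^b(R_{n,N}\text{-mod})}\bigl(P_i, L^g_{n,N}(P_j)[s_1]\langle s_2\rangle\bigr) \;=\; \sum_{r_1,r_2,r_3} \dim_k \mathrm{Hom}_{D^b(\overline{R}_n\text{-mod})}\bigl(\overline{P}_i, \overline{L}^g_n(\overline{P}_j)[r_1]\{-r_2\}\langle -r_3\rangle\bigr),
\end{equation*}
and the right-hand side equals $2 I(b_i, f(b_j))$ by \eqref{gin}, completing the proof. The main obstacle is the second step: one must carefully verify that the $\rho$-twist functors on the two graded module categories really do coincide after forgetting gradings, since they involve winding shifts and twisted multiplications whose compatibility with grading-forgetting is not as immediate as for the $T_i$--$\mathcal{F}_i$ comparison. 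Once this bookkeeping is dispatched, the collapse of Hom sums and the appeal to \eqref{gin} are formal.
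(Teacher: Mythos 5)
Your overall strategy coincides with the paper's: both reduce the claim to the Gadbled--Thiel--Wagner formula \eqref{gin} by arguing that the Hom sums over $R_{n,N}$ and over $\overline{R}_n$ agree, on the grounds that the two algebras are the same algebra with different gradings and the braid actions are built from the same bimodules. The difference is in the mechanism, and yours has a genuine gap. Your Step 1 establishes the collapse $\bigoplus_{s_1,s_2}\mathrm{Hom}(P_i,M[s_1]\langle s_2\rangle)\cong\mathrm{Hom}_A(\mathcal{P}_i,\mathcal{M})$ only for $M$ a single (bi)graded module, but the object you must feed into it, $L^g_{n,N}(P_j)$, is an iterated cone: its Hom complex $e_i\,L^g_{n,N}(P_j)$ carries a nontrivial differential, and the sum of derived Homs over all shifts is the total \emph{cohomology} of that complex, not the dimension of an ungraded Hom of underlying modules. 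Worse, on the $R_{n,N}$ side the first grading is the cohomological grading of a dg algebra, so ``forgetting the grading'' does not give an exact functor to $D^b(A\text{-mod})$ at all --- it folds a $\mathbb{Z}$-graded dg module into a differential module, a setting where $[1]\cong\mathrm{id}$ and where commutation with cones has to be checked by hand on explicit models. Consequently your Step 2, the assertion that $\Phi_{n,N}$ and $\bar{\Phi}_n$ induce ``the same ungraded endofunctor,'' is exactly where the entire content of the lemma lives, and checking it on generators is not enough: one needs an induction over words in $\hat{B}_{\tilde{A}_n}$ showing that the iterated cones match after folding. You flag the $\rho$-twist as the main obstacle, but the same issue is already present for the $T_i$ versus $\mathcal{F}_i$ comparison, because the cone over $R_{n,N}$ shifts the cohomological degree by amounts $d_i$ while the cone over $\overline{R}_n$ shifts the internal degree by $0$ or $1$ and the homological degree by $1$.

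The paper avoids all of this by \emph{not} forgetting the gradings: following Seidel--Thomas (Lemmas 4.17--4.18 of \cite{ST}), it constructs a comparison functor $\Pi\colon K^{b}_{f}(\operatorname{\overline{\mathit{R}}_n-proj})\to D^b(R_{n,N})$ sending $\overline{P}_i\{r_1\}\langle r_2\rangle$ to $P_i[\sigma_i-Nr_1]\langle r_2\rangle$ with $\sigma_i=-d_1-\cdots-d_{i-1}$, verifies the graded-piece isomorphism $(\overline{j})\overline{R}_n^{d_1,d_2}(\overline{i})\cong(j)R_{n,N}^{\sigma_j-\sigma_i+Nd_1,d_2}(i)$, proves the Hom isomorphism \eqref{mor} along the diagonal $r_2=Nr_1$, and checks that $\Pi$ intertwines $\overline{L}^g_n$ with $L^g_{n,N}$. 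Summing \eqref{mor} over all diagonals then yields exactly the equality of Hom sums you want, with the grading bookkeeping forcing the identification of basis elements rather than leaving it to an unverified ungraded matching. To repair your argument you would either have to carry out the induction establishing that the folded differential modules underlying $L^g_{n,N}(P_j)$ and $\overline{L}^g_n(\overline{P}_j)$ are isomorphic compatibly with the idempotents $e_i$ --- which is essentially reconstructing $\Pi$ in disguise --- or simply adopt the graded functor $\Pi$ as the paper does.
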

\begin{proof}
	Consider a functor from $\operatorname{\overline{\mathit{R}}_n-proj}$ to the category of dgm over $R_n$ which sends the object $\overline{P}_i\{r_1\}\langle r_2\rangle$ to the dgm $P_i[\sigma_i-Nr_1]\langle r_2\rangle$, where $\sigma_i=-d_1-d_2-\cdots-d_{i-1}$. 
	Let $\overline{R}_n^{d_1,d_2}$ be the space of elements of the first grading of degree $d_1$ and the second grading of degree $d_2$ of $\overline{R}_n$. The elements of the subspace $(\overline{j})\overline{R}_n^{r_1-r_2,s_1-s_2}(\overline{i})$ is the the homomorphism of graded modules	
	$\overline{\mathit{P}}_i\{r_1\}\langle
	s_1\rangle\rightarrow\overline{\mathit{P}}_j\{r_2\}\langle s_2\rangle$. For the $R_n$ algebra counterpart, elements of the first grading of degree $\sigma_j-\sigma_i-N(r_2-r_1)$ and the second grading of degree $s_1-s_2$ in $(j)R_{n,N}(i)$ would correspond to dgm homomorphisms between
	$P_i[\sigma_i-Nr_1]\langle s_1\rangle$ and $P_j[\sigma_j-Nr_2]\langle s_2\rangle$.
	
	Sending the basis element $(\overline{i_0|\cdots|i_{\nu}})\in\overline{R}_n$ to the corresponding element to $(i_0|\cdots|i_{\nu})\in R_n$ gives an isomorphism for any $1\geq i,j\geq n$ and $d_1,d_2\in \mathbb{Z}$,
	\begin{equation*}
		\,(\overline{j})\overline{R}_n^{d_1,d_2}(\overline{i})\cong(j)R_n^{\sigma_j-\sigma_i+Nd_1,d_2}(i).
	\end{equation*}	
	Following the same argument of lemma 4.18 of \cite{ST}, we can construct an exact functor $\Pi:K^{b}_{f}(\operatorname{\overline{\mathit{R}}_n-proj})\rightarrow D(\operatorname{\mathit{R}_n-mod})$ such that
	\begin{equation}\label{mor}
		\bigoplus_{r_2=Nr_1} \mathrm{Hom}_{K^{b}_{f}(\operatorname{\overline{\mathit{R}}_n-proj})}(X,Y\{r_1\}[r_2]\langle r_3\rangle)\longrightarrow \mathrm{Hom}_{D(\operatorname{\mathit{R}_n-mod})}(\Pi(X),\Pi(Y)),
	\end{equation}
	is an isomorphism for all $X,Y\in K^{b}_{f}(\operatorname{\overline{\mathit{R}}_n-proj})$.
	
	As in the proof of lemma 4.17 of \cite{ST} the functor $\Pi$ commutes with functors $\overline{L}_n^{g}$, $L_{n,N}^{g}$ representing $\bar{\Psi}_n(g)$ and $\Phi_{n,N}(g)$ respectively. Using the equation (\ref{gin}) and the isomorphism (\ref{mor}), we have the following identity	
	
	\begin{align*}
		&\sum_{s_1,s_2}\mathrm{dim}_{k}\mathrm{Hom}_{D(\operatorname{\mathit{R}_n-mod})}(P_i,L^{g}_{n,N}(P_j)[s_1]\langle s_2\rangle)\\&=\sum_{s_1,s_2}\mathrm{dim}_{k}\mathrm{Hom}_{D(\operatorname{\mathit{R}_n-mod})}(\Pi(\overline{P}_i),\Pi\overline{L}^{g}_{n,N}(\overline{P}_j)[s_1]\langle s_2\rangle)\\&
		=\sum_{r_1,r_2,r_3}\mathrm{dim}_{k}\mathrm{Hom}_{D^{b}(\operatorname{\overline{\mathit{R}_{n}}-mod})}(\overline{P}_i,\overline{L}_{n}^{g}(\overline{P}_j))[r_1]\{-r_3\}\langle-r_2\rangle)\\&=2I(b_i,f_{\sigma}(b_j)).
	\end{align*}	
\end{proof}	
\begin{thm}\label{maint}
	Let $L_{n,N}^{g}$ be a functor representing $\Phi_{n,N}(g)$ for some $g\in\hat{B}_{\tilde{A}_n}$. If $L_{n,N}^{g}(P_j)\simeq P_j$ for all $j$, then g must be the identity element.
\end{thm}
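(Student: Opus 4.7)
The idea is to translate the categorical hypothesis into an equality of geometric intersection numbers via Lemma~\ref{inter}, then invoke Lemma~\ref{mapping} to force the associated mapping class to be central, and finally detect the remaining center using the internal second grading via Lemma~\ref{iso}.

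First, let $f \in \mathrm{MCG}(\mathbb{D}^*;\Delta)$ be the mapping class corresponding to $g$. Applying Lemma~\ref{inter} to the identity element, whose representing functor is the identity and whose mapping class is the identity, gives
\begin{equation*}
\sum_{s_1, s_2 \in \mathbb{Z}} \mathrm{dim}_{k}\,\mathrm{Hom}_{D^{b}(\operatorname{\mathit{R}_{n,N}-mod})}(P_i, P_j[s_1]\langle s_2\rangle) \;=\; 2\,I(b_i, b_j).
\end{equation*}
Applying the same lemma to $g$ and then substituting the hypothesis $L^{g}_{n,N}(P_j) \simeq P_j$ into the left-hand side yields exactly the same sum, so $I(b_i, f(b_j)) = I(b_i, b_j)$ for every $1 \le i, j \le n$.

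By Lemma~\ref{mapping} this forces $f = \rho^{n\nu}$ for some $\nu \in \mathbb{Z}$. Using the classical isomorphism $\hat{B}_{\tilde{A}_n} \cong \mathrm{MCG}(\mathbb{D}^*;\Delta)$ that underlies the passage between $g$ and $f$ in Lemma~\ref{mapping}, this lifts to $g = \rho^{n\nu}$ in $\hat{B}_{\tilde{A}_n}$, and it remains only to rule out $\nu \neq 0$.

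For this I iterate Lemma~\ref{iso}: since $F_{\rho}^{n}(P_j) \simeq P_j\langle -1\rangle$ and $F_{\rho}$ commutes with the internal shift $\langle - \rangle$, we obtain $L^{g}_{n,N}(P_j) \simeq F_{\rho}^{n\nu}(P_j) \simeq P_j\langle -\nu\rangle$. Combined with the hypothesis, this gives $P_j \simeq P_j\langle -\nu\rangle$ in $D^{b}(\operatorname{\mathit{R}_{n,N}-mod})$, which is impossible unless $\nu = 0$ because $P_j = (j)R_{n,N}$ is finitely supported in the second grading. The only point that needs real care is this last step: one must verify that the two gradings on $R_{n,N}$ are genuinely independent, so that the internal shift $\langle -\nu\rangle$ cannot be absorbed into the triangulated shift $[\,\cdot\,]$ or into any self-isomorphism of $P_j$. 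With this confirmed, the three ingredients assemble directly to $g = e$.
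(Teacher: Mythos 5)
Your proposal is correct and follows essentially the same route as the paper's proof: translate the hypothesis into the equality $I(b_i,f(b_j))=I(b_i,b_j)$ via Lemma~\ref{inter}, apply Lemma~\ref{mapping} to conclude $f=\rho^{n\nu}$, and then use Lemma~\ref{iso} together with the second grading to force $\nu=0$. The only cosmetic difference is that the paper runs the intersection-number computation against an arbitrary auxiliary element $g'$ and its functor $(L^{g'}_{n,N})^{-1}$, whereas you specialize immediately to $g'=e$, which already suffices for the hypothesis of Lemma~\ref{mapping}.
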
	
\begin{proof}	
	For $g\in\hat{B}_{\tilde{A}_n}$, choose $f$ and $L_{n,N}^g$ as in the lemma \ref{inter}.
	Take another element $g' \in\hat{B}_{\tilde{A}_n}$ and corresponding $f'$, $L_{n,N}^{g'}$.
	Applying lemma \ref{inter} to $(g')^{-1}g$, we have
	\begin{align*}
		&I(f'(b_i),f(b_j))=I(b_i,(f')^{-1}f(b_j))\\&=\frac{1}{2}\sum_{s_1,s_2}\mathrm{dim}_{k}\mathrm{Hom}_{D(\operatorname{\mathit{R}_n-mod})}(P_i,(L^{g'}_{n,N})^{-1}L^g_{n,N}(P_j)[s_1]\langle s_2\rangle)\\
		&=\frac{1}{2}\sum_{s_1,s_2}\mathrm{dim}_{k}\mathrm{Hom}_{D(\operatorname{\mathit{R}_n-mod})}(P_i,(L^{g'}_{n,N})^{-1}(P_j)[s_1]\langle s_2\rangle)\\
		&=I(b_i,(f')^{-1}(b_j))=I(f'(b_i),b_j).
	\end{align*}
	By lemma \ref{mapping}, $g$ is the central element  of the form $\rho^{ n\nu}$ for some $\nu\in\mathbb{Z}$.
	By (\ref{shift}), $L^{g}_{n,N}(P_j)\simeq P_j\langle-\nu\rangle$ and the assumption $L_{n,N}^{g}(P_j)\cong P_j$ implies that $\nu=0$ so $g=1$.
\end{proof}

In this subsection we show that the spherical twist subgroup $\mathrm{Sph}(\tilde{A}_{n,N})$ is isomorphic to the braid group $B_{\tilde{A}_n}$ of the affine type $A$. This statement was proved in \cite{Qiu} for $N=3$, so we will consider $N\ge 4$. We define the functor $\Psi_S$ to be the composition
\begin{equation*}
	\xymatrix{ 
		D(\tilde{A}_{n,N})\ar[rr]^{hom(S,-)}&& K(end(S))\ar[rrr]^{ \text{quotient functor}}&&&D(end(S))
	}.
\end{equation*}
The twist functors $\mathrm{Tw}_{S_i}$ on $D(\tilde{A}_{n,N})$ and $\mathcal{T}_i$ on $D(end(S))\simeq D^b(\operatorname{\mathit{R}_{n}-mod})$ are related through $\Psi_S$ \cite[lemma 4.3]{ST}.

\begin{lem}\label{com} Let the objects $S_1,\cdots S_n$ be as above. The twist functors $\mathrm{Tw}_{S_i}$ are defined in definition 2.2. Then the following diagram is commutative up to isomorphism for $1\leq i \leq n$:
	\begin{equation}
		\xymatrix{ 
			D(\tilde{A}_{n,N}) \ar[rr]^{\mathrm{Tw}_{S_i}}\ar[d]_{\Psi_S} && D(\tilde{A}_{n,N})\ar[d]^{\Psi_S} 
			&\\
			D(end(S)) \ar[rr]_{\mathcal{T}_i} && D(end(S))
		}
	\end{equation}
	
\end{lem}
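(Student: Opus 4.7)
The plan is to follow the strategy of Seidel--Thomas \cite[Lemma 4.3]{ST}: apply the exact functor $\Psi_S$ term-by-term to the distinguished triangle defining $\mathrm{Tw}_{S_i}(E)$ and identify the resulting triangle in $D(end(S))$ with the distinguished triangle defining $T_i(\Psi_S(E))$. Starting from the defining triangle
$$ \mathrm{Hom}^{*}(S_i, E) \otimes_k S_i \xrightarrow{ev} E \to \mathrm{Tw}_{S_i}(E) \to [1] $$
in $D(\tilde{A}_{n,N})$, I would apply $\Psi_S = hom(S,-)$ to obtain a distinguished triangle
$$ \mathrm{Hom}^{*}(S_i, E) \otimes_k \Psi_S(S_i) \to \Psi_S(E) \to \Psi_S(\mathrm{Tw}_{S_i}(E)) \to [1] . $$
By Lemma \ref{coho}, $\Psi_S(S_i) = hom(S,S_i)$ is quasi-isomorphic to the indecomposable right projective $P_i$, so the left-hand tensor factor becomes $P_i$.

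Next I would identify $\mathrm{Hom}^{*}(S_i, E)$ with $\Psi_S(E) \otimes_{R_{n,N}} {}_iP$. Since ${}_iP = R_{n,N}\cdot(i)$, for any right dg module $M$ one has a natural isomorphism $M \otimes_{R_{n,N}} {}_iP \cong M\cdot(i)$; applied to $M = hom(S,E)$ this gives $hom(S,E)\cdot(i) \simeq hom(S_i, E)$, a quasi-isomorphism natural in $E$. Moreover the evaluation map $\Psi_S(E) \otimes_{R_{n,N}}{}_iP \otimes_k P_i \to \Psi_S(E)$ appearing in the definition of $T_i$ is induced by the composition in $end(S)$, which is precisely the image under $\Psi_S$ of the evaluation $\mathrm{Hom}^{*}(S_i,E)\otimes_k S_i \to E$. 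Plugging these identifications in, the image triangle becomes
$$ \Psi_S(E) \otimes_{R_{n,N}} {}_iP \otimes_k P_i \to \Psi_S(E) \to \Psi_S(\mathrm{Tw}_{S_i}(E)) \to [1], $$
which is the distinguished triangle defining $T_i(\Psi_S(E))$, and the desired natural isomorphism $\Psi_S \circ \mathrm{Tw}_{S_i} \simeq T_i \circ \Psi_S$ follows from the (weak) uniqueness of cones.

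The main obstacle is not any individual identification above but assembling them into a genuine morphism of distinguished triangles that is natural in $E$. For this one has to work on the dg level before descending to the derived category: model $\Psi_S$ as a dg functor, present both twist functors as explicit cones of bimodule complexes, and check that the Yoneda-type evaluation maps on the two sides are intertwined by $\Psi_S$. This bookkeeping is carried out in detail in \cite{ST} for the $A_n$ case; the argument transfers without change to the affine setting here, since it uses only the formal structure of a dg-endomorphism algebra of $N$-spherical objects and is insensitive to the shape of the underlying quiver.
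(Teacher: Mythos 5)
Your proposal is correct and follows exactly the route the paper takes: the paper offers no independent proof of this lemma but simply invokes Lemma~4.3 of Seidel--Thomas, and your argument is a faithful outline of that lemma's cone-comparison proof (apply $hom(S,-)$ to the defining triangle, identify $hom(S,S_i)$ with $P_i$ and $hom(S_i,E)$ with $hom(S,E)\otimes_{R_{n,N}}{}_iP$, match evaluation maps, and resolve the naturality issue at the dg level as in \cite{ST}). The only cosmetic quibble is that the identification $\Psi_S(S_i)\simeq P_i$ is by construction of $\Psi_S$ rather than a consequence of Lemma~\ref{coho}.
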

An augmented graded algebra is a graded algebra $R$ over a ring $B$ with a graded algebra homomorphism $\epsilon_R:R\rightarrow B$ and $\iota_A:B\rightarrow R$ satisfies $\epsilon_R\circ\iota_R=id_{B}$. We write $R^+$ for the subspace of elements of positive degree.
We recall the result in \cite{ST} that the cohomology of an augmented graded algebra $R_{n}$ determines the quasi-isomorphism type so that we can construct a chain of quasi-isomorphims connecting differential graded algebra $R_{n}$ and endomorphism algebra $end(S)$. 
\begin{defi}
	A differential graded algebra $R$ is called formal if its cohomology algebra $H(A)$ is isomorphic to itself.
	A graded algebra is called intrinsically formal if any dga $C$ with $H(C)\cong R$ is formal.
\end{defi}

\begin{thm} \cite[Theorem 4.7]{ST}
	Let A be an augmented graded algebra. If the Hochschild cohomology $HH^q(A,A[2-q])=0$ for all $q>0$, then A is intrinsically formal.	
\end{thm}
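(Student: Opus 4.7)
The plan is to transfer the problem into the world of $A_\infty$-algebras and then run an obstruction-theoretic induction governed by Hochschild cohomology. The key input is Kadeishvili's homological perturbation theorem: for any dga $C$ with $H(C)\cong A$, the cohomology $A$ inherits a minimal $A_\infty$-structure $(A,m_2,m_3,\ldots)$ that is $A_\infty$-quasi-isomorphic to $C$, with $m_2$ the prescribed graded algebra multiplication and $m_n\in \Hom(A^{\otimes n},A)$ of total degree $2-n$ for $n\geq 3$. Proving $C$ is formal then reduces to exhibiting an $A_\infty$-isomorphism between $(A,m_2,m_3,m_4,\ldots)$ and the trivial model $(A,m_2,0,0,\ldots)$.

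First I would set up the induction on $n$: assume inductively that an $A_\infty$-change of coordinates has been made so that $m_3=\cdots=m_{n-1}=0$, and look at the first possibly nontrivial higher product $m_n$. The Stasheff relation of arity $n+1$, read modulo the vanishing of the lower $m_k$, collapses to $\delta(m_n)=0$ in the Hochschild cochain complex $C^\bullet(A,A)$. By a careful grading count, $m_n$ is a cocycle of total degree $2-n$, so its class lies in $HH^n(A,A[2-n])$. By hypothesis this group vanishes, so $m_n=\delta g_n$ for a Hochschild cochain $g_n$ of the appropriate degree.

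Second, I would construct the correcting $A_\infty$-isomorphism $f$ with $f_1=\mathrm{id}_A$, $f_k=0$ for $1<k<n$, and $f_n=g_n$ (higher $f_k$ to be chosen freely). Unwinding the $A_\infty$-morphism equation degree by degree, the transformed structure $m'_\bullet$ satisfies $m'_k=0$ for $k<n$ and $m'_n=m_n-\delta g_n=0$, while the higher $m'_k$ are altered in a controlled way but need not vanish at this stage. This advances the induction. Because each step fixes all $m_k$ for $k<n$ and only modifies higher arities, the successive $A_\infty$-isomorphisms stabilize arity-by-arity, producing a well-defined limit $A_\infty$-isomorphism from $(A,m_2,m_3,\ldots)$ to $(A,m_2,0,0,\ldots)$; composing with the Kadeishvili quasi-isomorphism $C\to (A,m_\bullet)$ exhibits $C$ as formal.

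The main obstacle is the first step: identifying the precise bigrading in which each obstruction lives so that exactly the hypothesis $HH^q(A,A[2-q])=0$ for $q>0$ kills every $m_n$. This requires careful bookkeeping of Koszul signs and of cohomological versus internal degrees in the bar construction on $A$; modulo this verification, the inductive argument is formal. Convergence of the tower is automatic because each correction only affects Taylor coefficients of arity $\geq n$, so the limit morphism is defined without any analytic input.
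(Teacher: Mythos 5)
You are reviewing a statement the paper does not actually prove: it is quoted verbatim from Seidel--Thomas \cite{ST} (their intrinsic formality criterion), and the paper only uses it as a black box to deduce Lemma \ref{int}. Your proposal reconstructs essentially the proof given in \cite{ST}: pass to the Kadeishvili minimal $A_\infty$-model $(A,m_2,m_3,\dots)$ of $C$, observe that the first nonvanishing higher product $m_n$ is a Hochschild cocycle of bidegree placing its class in $HH^n(A,A[2-n])$, kill it using the vanishing hypothesis, and iterate, with convergence guaranteed because each gauge transformation only touches arities $\geq n$. Two small corrections to your bookkeeping: the primitive $g_n$ with $\delta g_n=m_n$ has arity $n-1$ and degree $2-n$, so the correcting morphism component must be $f_{n-1}=g_n$ (an $f_n$ would take $n$ inputs and have the wrong degree $1-n$); and since the obstructions only occur for $n\geq 3$, the argument really only needs $HH^q(A,A[2-q])=0$ for $q>2$, which is how \cite{ST} state it (the paper's hypothesis $q>0$ is stronger than necessary but harmless). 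Also note that in the application here $A=R_{n,N}$ is an algebra over the semisimple ring $B=k^{\oplus n}$, so the bar complex and all $A_\infty$-data should be taken $B$-bilinear; your argument goes through verbatim in that relative setting.
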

\begin{lem}\label{int}
	$R_n$ is intrinsically formal for $N\geq 4$ and $n\geq 3$.
\end{lem}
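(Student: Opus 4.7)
The plan is to invoke the intrinsic formality criterion quoted just above, which reduces the lemma to showing that $HH^q(R_{n,N}, R_{n,N}[2-q])$ vanishes for every $q>0$. This mirrors the strategy used by Seidel--Thomas in \cite{ST} for the bigraded zigzag algebra of a Dynkin $A_n$ quiver; the adaptation needed is to replace the linear quiver by the cyclic quiver $\Gamma_n$ and to accommodate the first grading $\deg(i|i+1)=d_i$ in place of the uniform degree $1$ used there.

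First, I would construct an explicit $R_{n,N}$-bimodule resolution of $R_{n,N}$ over the semisimple ring $B:=k^{\oplus n}$. Since the defining relations $(i|i-1|i)=(i|i+1|i)$ and $(i-1|i|i+1)=(i+1|i|i-1)=0$ are quadratic, a Koszul-type construction produces a resolution
\[\cdots\to R_{n,N}\otimes_B V_q\otimes_B R_{n,N}\to\cdots\to R_{n,N}\otimes_B V_1\otimes_B R_{n,N}\to R_{n,N}\otimes_B R_{n,N}\to R_{n,N}\to 0,\]
in which each $V_q$ is a $B$-bimodule with an explicit combinatorial basis of paths in $\Gamma_n$ of prescribed bidegree. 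Applying $\Hom_{R_{n,N}\text{-}R_{n,N}}(-,R_{n,N}[2-q])$ to this resolution and taking cohomology computes $HH^q(R_{n,N}, R_{n,N}[2-q])$ term-by-term.

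Next I would track the first internal grading through the resulting cochain complex. The shift $[2-q]$ singles out a single first-degree slice of $HH^q$ that could represent an obstruction to formality, and under the hypothesis $N\geq 4$ the step sizes $d_i$ and $N-d_i$ of edges satisfy $d_i+d_{i+1}=N\geq 4$, so the first degrees of basis paths in $R_{n,N}$ are spaced out enough that this slice of the Hochschild complex is already zero in every cohomological degree $q>0$. The supplementary hypothesis $n\geq 3$ rules out the small cases $n=1,2$ in which the cycle is so short that wrapping paths around $\Gamma_n$ identify bidegrees and could produce spurious cocycles. The principal obstacle is carrying out the resolution construction explicitly and keeping track of the two gradings near the arrow $(n|1)$ where the second grading is nontrivial; once the bidegree bookkeeping is in place, the vanishing is a finite check and intrinsic formality follows immediately from the quoted criterion.
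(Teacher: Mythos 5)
You correctly reduce the lemma to the vanishing of $HH^q(R_{n,N},R_{n,N}[2-q])$ via the quoted criterion, but from there your plan both diverges from the paper and, more importantly, never reaches the step that actually proves anything. The paper does not build a Koszul bimodule resolution: it works directly with the normalized bar (Hochschild cochain) complex, whose $q$-th term is $\Hom_{B\text{-}B}\bigl((R_{n,N}^+)^{\otimes_B q},R_{n,N}[2-q]\bigr)$ with $B=k^{\oplus n}$, and shows these cochain groups are \emph{identically zero}, so no cohomology computation is needed at all. The entire content is a degree estimate: every arrow of $\Gamma_n$ has first degree $d_i$ or $N-d_i$, hence at least $\lfloor N/2\rfloor$, and since the tensor product over $B$ forces consecutive tensor factors to concatenate as paths, $(R_{n,N}^+)^{\otimes_B q}$ is concentrated in first degrees at least $\lfloor Nq/2\rfloor$; on the other hand $R_{n,N}$ lives in degrees at most $N$, so $R_{n,N}[2-q]$ lives in degrees at most $N+q-2$, and $\lfloor Nq/2\rfloor>N+q-2$ once $N\geq 4$ in the range of $q$ where the $A_\infty$-obstructions live. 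Your proposal gestures at the ``step sizes being spaced out enough'' but never states or verifies this inequality; that inequality \emph{is} the lemma, and without it you have a plan rather than a proof.

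Two further remarks. The Koszul-resolution route is not wrong in principle, but it is strictly harder: you would first have to establish Koszulity of the affine zigzag algebra and identify the bidegrees of the bimodules $V_q$, only to arrive at the same degree comparison that the bar complex hands you for free; and your claim that the relevant slice vanishes ``in every cohomological degree $q>0$'' is too strong even at the cochain level (for $q=1,2$ the degree windows overlap), whereas the criterion only needs $q>2$. Finally, your explanation of the hypothesis $n\geq 3$ --- wrap-around paths identifying bidegrees and producing spurious cocycles --- is not what is going on: the degree count is insensitive to $n$, and the hypothesis is there because for $n\leq 2$ the cyclic quiver $\Gamma_n$ and its defining relations degenerate, so $R_{n,N}$ is not of the stated form in the first place.
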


\begin{proof}
	The degree for any path of the cyclic quiver $\Gamma_n$ is $\geq[N/2]$. The degree of two composed paths would be $N$. This implies that the degree of any non-zero path $(i_0|\cdots|i_m)$ of length $m$ in $\mathbb{K}\Gamma_n$ is $\geq[(Nm/2)]$. Any elements of $(R_{n}^+)^{\otimes_Bq}$ is of the form
	\begin{equation*}
		c=(i_{1,0}|\cdots|i_{1,m_1})\otimes(i_{2,0}|\cdots|i_{2,m_2})\otimes\cdots(i_{q,0}|\cdots|i_{q,m_q})
	\end{equation*}
	with all $m_q>0$
	Because the tensor product is over $B=k^{\oplus n}$, such a $c$ can be nonzero only if the paths match up, $i_{p,m_p}=i_{p+1,0}$. Therefore,
	\begin{equation*}
		\mathrm{deg}(c)=\mathrm{deg}(i_{1,0}|\cdots|i_{1,m_1}|i_{2,1}|\cdots|i_{2,m_2}|i_{3,1}|\cdots|i_{q,m_q})\geq [N(m_1+\cdots m_q)/2]
	\end{equation*}
	So $(R_{n}^+)^{\otimes_Bq}$ is concentrated in degree $\geq [(Nq)/2]$, while $R_{n}[2-q]$ is concentrated in degrees $\leq N+q-2$. 
	Hochschild cochain group is zero,
	\begin{equation*}
		C^q(R_{n},R_{n}[2-q])=\mathrm{Hom}_{B-B}(R_{n}^+,R_{n}[2-q])=0
	\end{equation*}
	for $N\geq 4$.

\end{proof}

\begin{thm}\label{faithful}
	Suppose $N\geq 4$ and $n\geq 3$, let $S_1,\cdots, S_n$ be simple $\mathbb{K}\Gamma_N\tilde{A}_n$ modules. The corresponding spherical twist functors $\mathrm{Tw}_{S_1},\cdots\mathrm{Tw}_{S_n}$	generate a homomorphism $\Phi:B_{\tilde{A}_n}\rightarrow \mathrm{Aut}(D(\tilde{A}_{n,N}))$ which is injective.
\end{thm}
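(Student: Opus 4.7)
The plan is to transfer the problem from the Calabi--Yau category $D(\tilde{A}_{n,N})$ to the derived category of dg modules over the bigraded algebra $R_{n,N}$, where the faithfulness of the weak braid group action has already been established in Theorem \ref{maint}. The bridge is the functor $\Psi_S$ together with the intrinsic formality of $R_{n,N}$.

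\textbf{Step 1: Identify $D(end(S))$ with $D^b(R_{n,N}\text{-mod})$.} By Lemma \ref{coho}, the cohomology algebra of the dga $end(S) = hom(S,S)$ is isomorphic to $R_{n,N}$. By Lemma \ref{int} the graded algebra $R_{n,N}$ is intrinsically formal, so any dga with cohomology $R_{n,N}$ is quasi-isomorphic to $R_{n,N}$. Hence there is a zig-zag of quasi-isomorphisms between $end(S)$ and $R_{n,N}$, inducing an exact equivalence $\Xi : D(end(S)) \iso D^b(R_{n,N}\text{-mod})$ sending the dg module $hom(S,S_i)$ to the indecomposable right projective $P_i$.

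\textbf{Step 2: Transfer the braid action.} Set $\widetilde{\Psi}_S := \Xi \circ \Psi_S : D(\tilde{A}_{n,N}) \to D^b(R_{n,N}\text{-mod})$. By construction $\widetilde{\Psi}_S(S_i) \simeq P_i$. Moreover, by Lemma \ref{com} (and the fact that $\Xi$ is an equivalence that commutes with the twists built from the algebra structure) we obtain, for each $i = 1, \ldots, n$, a natural isomorphism of functors
\begin{equation*}
T_i \circ \widetilde{\Psi}_S \simeq \widetilde{\Psi}_S \circ \mathrm{Tw}_{S_i}.
\end{equation*}
Iterating along any word in $\sigma_1^{\pm 1}, \ldots, \sigma_n^{\pm 1}$ we conclude that for every $g \in B_{\tilde{A}_n}$ there is an isomorphism $L_{n,N}^g \circ \widetilde{\Psi}_S \simeq \widetilde{\Psi}_S \circ \Phi(g)$, where $L_{n,N}^g$ represents $\Phi_{n,N}(g)$.

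\textbf{Step 3: Apply Theorem \ref{maint}.} Suppose $g \in B_{\tilde{A}_n}$ lies in $\ker \Phi$, so $\Phi(g) \simeq \mathrm{id}$. Evaluating the isomorphism from Step 2 on each simple $S_j$ yields
\begin{equation*}
L_{n,N}^g(P_j) \simeq L_{n,N}^g(\widetilde{\Psi}_S(S_j)) \simeq \widetilde{\Psi}_S(\Phi(g)(S_j)) \simeq \widetilde{\Psi}_S(S_j) \simeq P_j
\end{equation*}
for all $j = 1, \ldots, n$. Since $B_{\tilde{A}_n} \subset \hat{B}_{\tilde{A}_n}$, Theorem \ref{maint} forces $g = 1$, proving injectivity. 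Composed with the obvious surjection $B_{\tilde{A}_n} \twoheadrightarrow \mathrm{Sph}(\tilde{A}_{n,N})$, this yields the claimed isomorphism $B_{\tilde{A}_n} \simeq \mathrm{Sph}(\tilde{A}_{n,N})$.

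\textbf{Main obstacle.} The conceptual heart of the argument is Step 1: the identification of $end(S)$ with $R_{n,N}$ up to quasi-isomorphism. The cohomological comparison (Lemma \ref{coho}) controls only the degree $0$ picture; the full dg-enhancement is recovered only because the Hochschild obstructions vanish for $N \geq 4$ (the hypothesis $N \geq 4, n\geq 3$ in the theorem is exactly what makes Lemma \ref{int} work). Once Step 1 is in place, Steps 2 and 3 are a routine bookkeeping exercise that reduces everything to the surface-topological faithfulness input supplied by Gadbled--Thiel--Wagner via Lemma \ref{mapping} and Lemma \ref{inter}.
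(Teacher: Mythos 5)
Your proposal follows essentially the same route as the paper: identify $end(S)$ with $R_{n,N}$ via Lemma \ref{coho} and the intrinsic formality of Lemma \ref{int}, transfer the twists through $\Psi_S$ using Lemma \ref{com}, and conclude by Theorem \ref{maint}. The only point the paper handles that you gloss over is the verification that the $\mathrm{Tw}_{S_i}$ actually satisfy the braid relations (so that $\Phi$ is a well-defined homomorphism, cited from Seidel--Thomas) and the preliminary degree shift of the $S_i$ needed to meet the hypothesis of Lemma \ref{coho}; these are minor and easily supplied.
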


\begin{proof}
	The spherical twist functors satisfy the braid group relations following proposition 2.12 and 2.13 in \cite{ST}: \begin{align*}
		\mathrm{Tw}_{S_i}\mathrm{Tw}_{S_j}&\simeq \mathrm{Tw}_{S_j}\mathrm{Tw}_{S_i}\quad\,\,\quad\quad\quad\quad for\,|i-j|\geq 2\\
		\mathrm{Tw}_{S_i}\mathrm{Tw}_{S_{i+1}}\mathrm{Tw}_{S_i}&\simeq \mathrm{Tw}_{S_{i+1}}\mathrm{Tw}_{S_i}\mathrm{Tw}_{S_{i+1}}\quad\,\,for\,\, i=1,\cdots,n.
	\end{align*} 
	So the twist functors $\mathrm{Tw}_{S_i}$ generate the homomorphism $\Phi$.

	Shifting each $S_i$ by some amount, $\mathrm{Hom}^*(S_{i+1},S_i)$ is concentrated in degree $d_i$ for $i=1,\cdots,n$. We take resolutions $S'_1,\cdots S'_n\in D(\tilde{A}_{n,N})$ for simple modules $S_1,\cdots S_n$.
	By lemma \ref{coho}, we have $H(end(S'))\simeq R_{n}$. By the intrinsic formality of $R_n$, lemma \ref{int} implies that $end(S')$ is quasi-isomorphic to $R_{n}$. We define a functor $\Psi$ to be the composition
	\begin{equation*}
		\xymatrix{ 
			D(\tilde{A}_{n,N}) \ar[r]^{\Psi_{S'}} & D(end(S'))  \ar[r]^{\cong} & D(\operatorname{\mathit{R}_n-mod}).
		}
	\end{equation*}	
	We have $\Psi(S_i)=P_i$ for $i=1,\cdots,n$. Consider the diagram
	\begin{equation*}
		\xymatrix{ 
			D(\tilde{A}_{n,N}) \ar[d]_{\mathrm{Tw}_{S_i}} \ar[rr]^{\Psi_{S'}} && 
			D(end(S')) \ar[d]^{T_i} \ar[rr]^{\cong}&&  D(\operatorname{\mathit{R}_n-mod})\ar[d]^{T_i} \\ 
			D(\tilde{A}_{n,N})\ar[rr]^{\Psi_{S'}} && D(end(S'))\ar[rr]^{\cong}&&D(\operatorname{\mathit{R}_n-mod})
		}
	\end{equation*}
	The first square commutes by lemma \ref{com} and the second one by Lemma 4.2 in \cite{ST}.
	Let $g$ be an element of $B_{\tilde{A}_n}$, a functor $R^{g}:D(\tilde{A}_{n,N})\rightarrow D(\tilde{A}_{n,N})$ which represents $\Phi(g)$, and $R_{n,N}^{g}:D(\operatorname{\mathit{R}_n-mod})\rightarrow D(\operatorname{\mathit{R}_n-mod})$ which represents $\rho_{n,N}(g)$.
	By the above commutative diagram, we have
	\begin{equation*}
		R^g_{n,N}\circ \Psi\cong \Psi\circ R^{g}.
	\end{equation*}
	Assume $R^g(S_i)\cong S_i$ for all $i$; $R_{n,N}^{g}(P_i)=R_{n,N}^{g}\Psi(S_i)\cong\Psi R^{g}(S_i)\cong\Psi(S_i)\cong P_i$. By theorem \ref{maint}, $g$ is the identity.
\end{proof}

The correspondence of generators $\sigma_i\rightarrow \mathrm{Tw}_{S_i}$ gives an isomorphism of groups $B_{\tilde{A}_n}\simeq \mathrm{Sph}(\tilde{A}_{n,N})$.

\section{Quadratic differentials and period maps}

First, we explain the Riemann surface with the meromorphic quadratic differential related to the affine type $A$ quiver algebra and construct their central charge formula for the stability conditions as a period map. 
Consider a meromorphic quadratic differential
\begin{equation*}
	\phi(z)=\bigg[\frac{\prod_{i=1}^{n}(z-a_i)}{z^{p}}\bigg]^{N-2}\frac{dz^{\otimes 2}}{z^2} 
\end{equation*}
which has zeroes of order $(N-2)$ at distinct $n$ points and a pole of order $p(N-2)+2$ at the origin and a pole of order $(N-2)(n-p)+2$ at the $\infty\in\mathbb{P}^1$. Denote by $\mathrm{Quad}(N,n)$ the space of quadratic differential $\phi(z)$.
In this section we review some terminologies of the geometry of quadratic differentials defined in \cite{Ike16} and fix notations. 
\subsection{Homology groups and Periods}
We define homology groups $H_{+}(\phi)$ for $N$ even 
\begin{equation*}
	H_+(\phi):=H_1(\mathbb{C},\mathrm{Zero}(\phi);\mathbb{Z}).
\end{equation*}
Let S be the spectral cover of $\mathbb{P}^1$ defined by $\phi$
\begin{equation*}
	S=\{(p,l(p))\,,p\in \mathbb{P}^1\,|\,\,l(p)\in L_p\,\,\text{such that}\,l(p)\otimes l(p)=\phi(p) \},
\end{equation*}
where $L$ is the total space of the line bundle $\omega_{\mathbb{P}^1}(E)$. Let $\pi: S\rightarrow\mathbb{P}^1$ be an branched covering map. The homology group $H_-(\phi)$ for $N$ odd is defined by
\begin{equation*}
	H_{-}(\phi):=H_{1}(S\,\backslash\,\pi^{-1}(\infty)\cup\pi^{-1}(0);\mathbb{Z}).
\end{equation*}
The intersection form on homology groups is defined in \cite{Ike16}
\begin{equation*}
	I_{\pm}:H_{\pm}(\phi)\times H_{\pm}(\phi)\rightarrow \mathbb{Z}.
\end{equation*} 
We can define a period map of $\phi$, $Z_{\phi}:H_{\pm}(\phi)\rightarrow\mathbb{C}$ by 
\begin{equation*}
	Z_{\phi}(\alpha)=\int_{\alpha}\sqrt{\phi}\in\mathbb{C} 
\end{equation*}
for any cycle $\alpha\in H_{\pm}(\phi)$.

Let $\Gamma$ be a free abelian group of rank $n$. The $\Gamma$-framing of $\phi\in \mathrm{Quad}(N,n)$ is an isomorphism of abelian groups
\begin{equation*}
	\psi: \Gamma\rightarrow H_{\pm}(\phi).
\end{equation*} 

Denote by $\mathrm{Quad}(N,n)^{\Gamma}$ the space of pairs $(\phi,\psi)$ consisting of a quadratic differential $\phi\in \mathrm{Quad}(N,n)$ and a $\Gamma$-framing $\psi$.
For $(\phi,\psi)\in \mathrm{Quad}(N,n)^{\Gamma}$, a period map is defined by the composition of a framing and $Z_{\phi}$,
\begin{align}\label{per}
	\mathcal{W}_N: \mathrm{Quad}(N,n)^{\Gamma}&\rightarrow \mathrm{Hom}_{\mathbb{Z}}(\Gamma,\mathbb{C})\\(\phi,\psi)\quad&\mapsto Z_{\phi}\circ \psi
\end{align}
which is a local isomorphism of complex manifold by \cite[theorem 4.12]{BrSm}.

The homology groups form a local system over the orbifold $\mathrm{Quad}(N,n)^{\Gamma}$.
We take a continuous path connecting two quadratic differentials $\phi$ and $\phi'$ in a contractible subset in $\mathrm{Quad}(N,n)$. A parallel transport along this path gives the Gauss-Manin connection which identifies the fibres $H_{\pm}(\phi)$ and $H_{\pm}(\phi')$.

For a framed differential $(\phi,\psi)\in \mathrm{Quad}(N,n)^{\Gamma}$, we denote by $\mathrm{Quad}(N,n)^{\Gamma}_*$ the connected component containing $(\phi,\psi)$. We consider a path $c$ connecting $\phi$ to any point $\phi'$ in $\mathrm{Quad}(N,n)^{\Gamma}_*$ and the new framing is given by $\psi'=\mathrm{GM}_c\circ\psi$ such that the diagram

\begin{equation}\label{gm}
	\xymatrix{ 
		& \Gamma \ar[dl]_{\psi} \ar[dr]^{\psi'}  & \\ 
		H_{\pm}(\phi) \ar[rr]^{\mathrm{GM}_c} && H_{\pm}(\phi'),
	}
\end{equation}
commutes.

\subsection{Trajectories}
At a point in $\mathbb{P}^1\backslash \mathrm{Crit}(\phi)$, there is a distinguished local coordinate $w$ for the differential on $\mathbb{P}^1$
\begin{equation*}
	w=\int\sqrt{\phi(z)}dz.
\end{equation*}
Then the differential has locally the form in this coordinate
\begin{equation*}
	\phi(w)=dw\otimes dw.
\end{equation*} 

We define a foliation of the phase $\xi$ on $\mathbb{P}^1\backslash \mathrm{Crit}(\phi)$ determined by the equation
\begin{equation}\label{folia}
	\mathrm{Im}(w/e^{\sqrt{-1}\pi\xi})=\mathrm{constant}.
\end{equation}
A horizontal foliation is a foliation of the phase $\xi=0$ of the above equation.
A straight arc of phase $\xi$ is an integral curve $I\rightarrow\mathbb{P}^1\backslash \mathrm{Crit}(\phi)$ satisfying the equation (\ref{folia}) for an open interval $I\in \mathbb{R}$.
Maximally extended straight arcs can be classified as follows:
\begin{itemize}
	\item[(1)] A \emph{saddle connection} of phase $\xi\neq 0$ is a maximal straight arc approaching two distinct zeroes of $\phi$.
	\item[(2)] A \emph{saddle trajectory} is a maximal straight arc of phase $\xi=0$ approaching two distinct zeroes of $\phi$.
	\item[(3)] A \emph{separating trajectory} is a maximal straight arc of phase $\xi=0$ approaching a zero at one end and a pole at the other end of $\phi$.
	\item[(4)] A \emph{generic trajectory} is a maximal straight arc of phase $\xi=0$ approaching a pole at both ends.
\end{itemize} 
A differential $\phi\in \mathrm{Quad}(N,n)$ is called \emph{saddle-free} if there is no saddle trajectory of $\phi$. For a saddle-free differential $\phi$, the maximal set of separating trajectories splits $\mathbb{P}^1$ into a disjoint union of connected components, horizontal strips $\{w\in\mathbb{C}|\,a<\mathrm{Im}w<b\}$ and half planes $\{w\in\mathbb{C}|\,\mathrm{Im}w>0\}$. There are $n$ horizontal strips and $(N-2)p$ half planes in the neighborhood of $0$ and $(N-2)q$ half planes in the neighborhood of $\infty$. Thus, we have $n$ saddle connections and its hat-homology classes form a basis of $H_{\pm}(\phi)$. For details, we refer to section 3 of \cite{Ike16}.

Next, we consider the behavior near a zero. 
Let $\phi $ and $p \in\mathbb{P}^1$ be a zero of order $N-2$. Then 
there is a local coordinate $t$ of a neighborhood $p \in U \subset \mathbb{P}^1$ ($t=0$ corresponds to $p$) 
such that 
\begin{equation*}
	\phi=c^2\, t^{N-2} dt^{\otimes 2}, \quad c=\frac{1}{2}\,N.
\end{equation*}
Hence on $U$, the distinguished local coordinate $w$ takes the 
form $w=t^{N/2}$. There are $N$ horizontal straight arcs coming from $p$
so the differential $\phi$ gives us $N$-angulations. 

For poles at $0$ and $\infty$ of $\mathbb{P}^1$, the differential $\phi$ has a pole of order $d_{p}^{N}+2=
p(N-2)+2$ at $0\in \mathbb{P}^1$ and a pole of order $d_{q}^{N}+2=(n-p)(N-2)+2$ at $\infty \in \mathbb{P}^1$.  
There is a neighborhood $0\in U$ ($\infty \in V$) $\subset \mathbb{P}^1$ and a collection of $d_{p}^{N}$ ($d_{q}^{N}$) distinguished 
tangent directions $v_i\,(i=1,\dots,d_{p}^{N})$ at 0 \big($v_j\,(j=1,\dots,d_{q}^{N}\big)$ at $\infty$) such that any trajectory entering $U(V)$ tends to 
0 ($\infty$) and becomes asymptotic to one of the $v_i$ ($v_j$). 

\section{Calabi-Yau $N$-categories from $N$-angulations of surfaces}
\subsection{The $N$-angulation of the marked annulus and colored quivers}
We take the real oriented blowup at $0$ and $\infty$ of $\mathbb{P}_1$ by replacing $0$ and $\infty$ with $S^1$ which corresponds to tangent directions at 0 and $\infty$, then we obtain an annulus with $d_{p}^{N}=
p(N-2)$ points on the interior boundary and $d_{q}^{N}=(n-p)(N-2)$ points on the exterior boundary. Those marked points correspond to the tangent direction at $0$ and $\infty$, respectively. Denote the marked annulus by $P_{d_{p}^{N},d_{q}^{N}}$ which is a regular $d_{q}^{N}$-gon cut out by an regular $d_{p}^{N}$-gon at its center. 

Label the vertices on the exterior polygon in the counterclockwise direction $i=0,\cdots p-1$ and label vertices of the interior polygon in the clockwise direction $i=0,\cdots ,q-1$.
An $N$-diagonal is a path connecting two vertices of the interior polygon or the exterior polygon. There are three types of path with restrictions:
\begin{itemize}
	\item[Type 1] A path connecting a vertex $i$ of the external polygon and a vertex $j$ of the interior polygon in $P_{d_{p}^{N},d_{q}^{N}}$. In addition, $i$ is congruent to $j$ modulo $N-2$. 
	\item[Type 2] A path from the vertex $i$ to the vertex $i+(N-2)k+1$ of the exterior polygon for $k\geq 0$
	\item[Type 3] A path from the vertex $i$ to the vertex $i+(N-2)k+1$ of the interior polygon for $k\geq 0$.
\end{itemize}
\begin{defi}
	An $N$-angulation $\Delta$ of $P_{d_{p}^{N},d_{q}^{N}}$ is a maximal set of non-crossing $N-2$ diagonals that divides the marked annulus into $N$-gons.
\end{defi}

\begin{figure}
	\begin{center}
		\tikzstyle{dot}=[circle,draw=black!100,fill=black!100,thick,
		inner sep=0pt,minimum size=2mm]
		\tikzstyle{sdot}=[circle,draw=black!100,fill=black!100,thick,
		inner sep=0pt,minimum size=0.8mm]
		\begin{tikzpicture}[scale=2.5]
			\draw (0,0) circle (1cm);
			\node (1) at (-0.28,0.96) [dot] {};	
			\node (2) at (0.28,0.96) [dot] {};
			\node (3) at (0.8,0.6) [dot] {};
			\node (4) at (1,0) [dot] {};
			\node (5) at (0.8,-0.6) [dot] {};
			\node (6) at (0.38,-0.923) [dot] {};
			\node (7) at (-0.28,-0.96) [dot] {};
			
			\node (8) at (-0.6,-0.8) [dot] {};
			\node (9) at (-0.923,-0.38) [dot] {};
			\node (10) at (-1.0,-0.0) [dot] {};
			\node (11) at (-0.923,0.38) [dot] {};
			
			\node[dot] (12) at (-0.6,0.8)  {};
			
			\draw (0,0) circle (0.4cm);
			\node (1) at (-0.28,0.96) [dot] {};	
			\node (2) at (0.28,0.96) [dot] {};
			
			\node (a) at (-0.24,0.32) [dot] {};	
			\node (b) at (0.24,0.32) [dot] {};		
			\node (f) at (-0.4,0) [dot] {};	
			\node (c) at (0.4,0) [dot] {};		
			\node (e) at (-0.24,-0.32) [dot] {};	
			\node (d) at (0.24,-0.32) [dot] {};		
			
			\draw[very thick,blue] (8).. controls (-0.78,0.0) .. (12);
			\draw[very thick,blue] (a)--(12);
			\draw[very thick,blue] (a).. controls (0.0,0.5) ..(3);
			\draw[very thick,blue] (a).. controls (-1.3,-0.2) and (0.55,-1.2) ..(c);
			
			\draw[very thick,blue] (b).. controls (0.6,0.5) and (1.2,-0.8) ..(8);
			\draw[very thick,blue] (3).. controls (0.8,-0.6) ..(7);
		\end{tikzpicture}
		\caption{An example of a 5-angulation of an annulus }
	\end{center}
\end{figure}
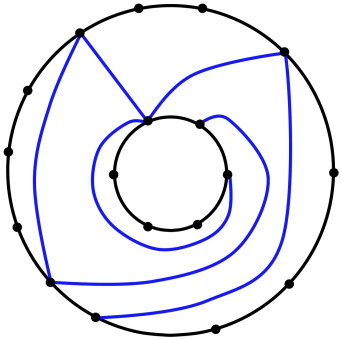

The number of $(N-2)$ diagonals of any $N$-angulation of  $P_{d_{p}^{N},d_{q}^{N}}$ is exactly $p+q$ and there is at least one $N-2$-diagonal connecting a vertex of the exterior polygon and a vertex of the interior polygon. The foliation of $\phi$ gives rise to a horizontal strip decomposition of the annulus. For a saddle-free differential $\phi\in \mathrm{Quad}(N,n)$, by taking one generic trajectory from each horizontal strip in the decomposition of by $\phi$, we have an $N$-angulation $\Delta_{\phi}$ of $P_{d_{p}^{N},d_{q}^{N}}$. An example of an $N$-angulation is shown in figure 4.

We define a colored quiver $Q_{\Delta}$ with $p+q$ vertices
associated to an $N$-angulation $\Delta$ of $P_{d_{p}^{N},d_{q}^{N}}$ \cite{Tor12}. The vertices of the quiver correspond to the $N-2$-diagonals. There is an arrow between $i$ and $j$ if the $N-2$-diagonals bound a common
$N$-gon. The colour of the arrow $(c)$ is the number of edges forming
the segment of the boundary of the $N$-gon which lies between $i$
and $j$, counterclockwise from $i$.
See figure 5 for an example.

An $(N-2)$-colored quiver $Q$ consists of vertices $\{1,\cdots,n\}$ and colred arrows $i\xrightarrow{(c)}j$, where $c\in\{0,1,\cdots,N-2\}$. Denote by $q^{(c)}_{ij}$ the number of arrows from $i$ to $j$ of color $(c)$.
we assume the following conditions:
\begin{itemize}
	\item[(1)] No loops, $q^{(c)}_{ii}$ for all $c$.
	\item[(2)] Monochromaticity, if $q_{ij}^{(c)}\neq 0$, then $q^{(c')}=0$ for $c\neq c'$.
	\item[(3)] Skew-symmetry, $q^{(c)}_{ij}=q^{(N-2-c)}_{ji}$.
\end{itemize}
Let $Q$ be a $(N-2)$ colored quiver satisfying the above conditions and fix a vertex $v$ in $Q$.
We define the forward colored quiver mutation of $Q$ at vertex $v$:
\begin{align*}
	\tilde{q}^{(c)}_{ij}=
	\begin{cases}
		q^{(c-1)}_{ij}  &\text{if} \quad v=i \\
		q^{(c+1)}_{ij}  &\text{if} \quad v=i \\
		\text{max}\{0,q^{(c)}_{ij}-\sum_{t\neq c}q^{(t)}_{ij}+(q^{(c)}_{iv}-q^{(c+1)}_{iv})q^{(0)}_{vj}\\
		\quad\quad\quad\quad\quad\quad\quad\quad\quad+q^{(N-2)}_{vj}(q^{(c)}_{vj}-q^{(c-1)}_{vj})\}\quad &\text{otherwise}.
	\end{cases}
\end{align*}
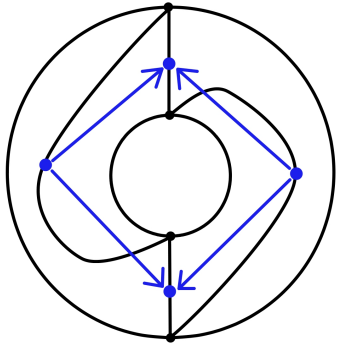
\begin{figure}
	\begin{center}
		\tikzstyle{post}=[<-,shorten <=1pt,>=stealth',very thick,draw=orange!100,]	
		\tikzstyle{dot}=[circle,draw=black!100,fill=black!100,thick,
		inner sep=0pt,minimum size=2mm]
		\tikzstyle{v}=[circle,draw=orange!100,fill=orange!100,thick,
		inner sep=0pt,minimum size=3mm]
		\begin{tikzpicture}[scale=2.5]
			\draw (0,0) circle (1cm);
			\draw (0,0) circle (0.4cm);
			\node (1) at (0,1) [dot] {};	
			\node (2) at (0,-1) [dot] {};		
			\node (A) at (0,0.4) [dot] {};	
			\node (B) at (0,-0.4) [dot] {};	
			\draw[very thick,blue] (1)--(A);
			\draw[very thick,blue] (2)--(B);
			\draw[very thick,blue] (1).. controls (-1.3,-0.2) and (-0.55,-0.8) ..(B);
			\draw[very thick,blue] (2).. controls  (1.3,0.2)and(0.55,0.8) ..(A);
			\node (p) at (0.7,0) [v] {};
			\node (q) at (-0.7,0) [v] {};
			\node (s) at (0,0.7) [v] {}
			edge[post,<-,](p)
			edge[post,<-,](q);
			\node (t) at (0,-0.7) [v] {}
			edge[post,<-,](p)
			edge[post,<-,](q);
		\end{tikzpicture}
		\caption{An example of a triangulation of $P_{2,2}$ and its corresponding quiver $Q_{\Delta}$ in orange color.}
	\end{center}
\end{figure} 	

\subsection{Equivalence between the mutation class of $N$-angulation and hearts}

We define the mutation of $N$-angulation in the following. Let $\Delta$ be any $N$-angulation of $P_{d_{p}^{N},d_{q}^{N}}$ and $\delta$ be any $N-2$-diagonal in the $\Delta$. There are two $N$-gons with the common edge $\delta$. By removing the $N-2$-diagonal from $\Delta$, we have an $2N-2$-gon in the almost complete N-angulation $\Delta-\delta$. There are $N-1$ possible diagonals $\alpha_i$ such that $\Delta-\delta\cup \alpha_i$ is an $N$-angulation. Those possible diagonals will be called diameters of the $(N-2)$-gon.

Denote by $\mu_{\delta}^{\sharp}\Delta$ the forward flip with respect to the diagonal $\delta$ obtained by rotating two boundary points of $\delta$ clockwise one step. Then we have obtained a new $(N-2)$-diagonals $\delta^{\sharp}$ and a new $N$-angulation $\mu^{\sharp}_{\delta}\Delta=(\Delta\backslash\delta)\cup\delta^{\sharp}$. We also define the backward flip operation with respect to $\delta$ by rotating counterclockwise and obtain a new $N-2$-diagonal $\delta^{\flat}$ and new $N$-angulation $\mu^{\flat}_{\delta}\Delta=(\Delta\backslash\delta)\cup\delta^{\flat}$. Mutation at any $N-2$-diagonal $\Delta$ of $P_{d_{p}^{N},d_{q}^{N}}$ corresponds to the colored quiver mutation at $v_{\alpha}$ in $Q_{\Delta}$ \cite[proposition 5.1]{Tor12}. See figure 6 for an example.

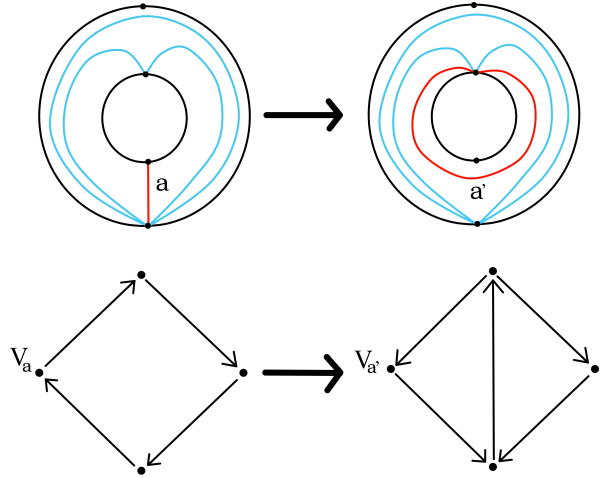
\begin{figure}
	\tikzstyle{post}=[<-,shorten <=1pt,>=stealth',very thick,draw=orange!100,]	
	\tikzstyle{dot}=[circle,draw=black!100,fill=black!100,thick,
	inner sep=0pt,minimum size=2mm]
	\tikzstyle{v}=[circle,draw=orange!100,fill=orange!100,thick,
	inner sep=0pt,minimum size=3mm]
	\centering
	\begin{tikzpicture}[scale=2]
		\coordinate (shift) at (1.5,0);
		\begin{scope}[shift=(shift))]
			\centering
			\draw (0,0) circle (1cm);
			\draw (0,0) circle (0.4cm);
			\node (1) at (0,1) [dot] {};	
			\node (2) at (0,-1) [dot] {};		
			\node (A) at (0,0.4) [dot] {};	
			\node (B) at (0,-0.4) [dot] {};	
			\draw[thick,red] (2)--(B);
			\tikzset{
				show curve controls/.style={
					decoration={
						show path construction,
						curveto code={
							\draw [blue, dashed]
							(\tikzinputsegmentfirst) -- (\tikzinputsegmentsupporta)
							node [at end, cross out, draw, solid, red, inner sep=2pt]{};
							\draw [blue, dashed]
							(\tikzinputsegmentsupportb) -- (\tikzinputsegmentlast)
							node [at start, cross out, draw, solid, red, inner sep=2pt]{};
						}
					}, decorate
				}
			}

			\draw [blue] plot [smooth cycle] coordinates {(0,-1)(-0.6,-0.5) (-0.8,0)(-0.6,0.5) (0,0.8) (0.6,0.5)(0.8,0)(0.6,-0.5) };
			\draw [blue] plot [smooth, tension=2] coordinates { (0,-1) (-0.6,0) (0,0.4)};
			\draw [blue] plot [smooth, tension=2] coordinates { (0,-1) (0.6,0) (0,0.4)};

			\draw (0,0) circle (1cm);
			\draw (0,0) circle (0.4cm);
			\node (1) at (0,1) [dot] {};	
			\node (2) at (0,-1) [dot] {};		
			\node (A) at (0,0.4) [dot] {};	
			\node (B) at (0,-0.4) [dot] {};	
			\centering	
		\end{scope}
		\draw [->,thick] (-0.4,0) -- (0.4,0)
		node [above,text width=3cm,text centered,midway]
		{
			flip at the\\ \textcolor{red}{diagonal} 
		};

		\coordinate (shift) at (-1.5,0);
		\begin{scope}[shift=(shift)]
			\draw (0,0) circle (1cm);
			\draw (0,0) circle (0.4cm);
			\node (1) at (0,1) [dot] {};	
			\node (2) at (0,-1) [dot] {};		
			\node (A) at (0,0.4) [dot] {};	
			\node (B) at (0,-0.4) [dot] {};	
			
			\tikzset{
				show curve controls/.style={
					decoration={
						show path construction,
						curveto code={
							\draw [blue, dashed]
							(\tikzinputsegmentfirst) -- (\tikzinputsegmentsupporta)
							node [at end, cross out, draw, solid, red, inner sep=2pt]{};
							\draw [blue, dashed]
							(\tikzinputsegmentsupportb) -- (\tikzinputsegmentlast)
							node [at start, cross out, draw, solid, red, inner sep=2pt]{};
						}
					}, decorate
				}
			}

			\draw [blue] plot [smooth cycle] coordinates {(0,-1)(-0.6,-0.5) (-0.8,0)(-0.6,0.5) (0,0.8) (0.6,0.5)(0.8,0)(0.6,-0.5) };
			\draw [blue] plot [smooth, tension=2] coordinates { (0,-1) (-0.6,0) (0,0.4)};
			\draw [blue] plot [smooth, tension=2] coordinates { (0,-1) (0.6,0) (0,0.4)};
			\draw [red] plot [smooth cycle] coordinates {(0,0.4)(-0.36,0.3)(-0.47,-0.12)(0,-0.6)(0.47,-0.12)(0.36,0.3)};

			\draw (0,0) circle (1cm);
			\draw (0,0) circle (0.4cm);
			\node (1) at (0,1) [dot] {};	
			\node (2) at (0,-1) [dot] {};		
			\node (A) at (0,0.4) [dot] {};	
			\node (B) at (0,-0.4) [dot] {};	
		\end{scope}
		\coordinate (shift) at (-1.5,-2.5);
		\begin{scope}[shift=(shift))]
			\tikzstyle{post}=[<-,shorten <=1pt,>=stealth',thick,draw=black!100,]	
			\tikzstyle{b}=[circle,draw=blue!100,fill=blue!100,thick,
			inner sep=0pt,minimum size=3mm]
			\tikzstyle{r}=[circle,draw=red!100,fill=red!100,thick,
			inner sep=0pt,minimum size=3mm]
			\node (p1) at (1,0) [b] {};
			\node (q1) at (-1,0) [r] {};
			\node (s1) at (0,1) [b] {}
			edge[post,<-,](q1);
			\node (t1) at (0,-1) [b] {}
			edge[post,<-,](p1);
			\node (p1) at (1,0) [b] {}
			edge[post,<-,](s1);
			\node (q1) at (-1,0) [r] {}
			edge[post,<-,](t1);
		\end{scope}
		
		\draw [->,thick] (-0.3,-2.5) -- (0.3,-2.5)
		node [above,text width=3cm,text centered,midway]
		{
			mutation at\\ the \textcolor{red}{vertex} 
		};

		\coordinate (shift) at (1.5,-2.5);
		\begin{scope}[shift=(shift))]
			\tikzstyle{post}=[<-,shorten <=1pt,>=stealth',thick,draw=black!100,]	
			\tikzstyle{b}=[circle,draw=blue!100,fill=blue!100,thick,
			inner sep=0pt,minimum size=3mm]
			\tikzstyle{r}=[circle,draw=red!100,fill=red!100,thick,
			inner sep=0pt,minimum size=3mm]
			\node (p) at (1,0) [b] {};
			\node (q) at (-1,0) [r] {};
			\node (s) at (0,1) [b] {};
			\node (t) at (0,-1) [b] {}
			edge[post,<-,](q)
			edge[post,<-,](p);
			\node (p) at (1,0) [b] {}
			edge[post,<-,](s);
			\node (q) at (-1,0) [r] {}
			edge[post,<-,](s);
			\node (s) at (0,1) [b] {}
			edge[post,<-,](t);
		\end{scope}

	\end{tikzpicture}
	\centering	
	
	\caption{An example of a flip of triangulations of annulus and the mutation at the vertex of the corresponding quivers}
\end{figure}
Define the graph $\tilde{\mathcal{G}}(P_{d_{p}^{N},d_{q}^{N}})$ to be an oriented graph whose vertices are all $N$-angulations of 
$P_{d_{p}^{N},d_{q}^{N}}$, and for any two $N$-angulations $\Delta$ and $\Delta^{\prime}$, the 
arrow $\Delta\rightarrow \Delta^{\prime}$ is given if 
there is an $(N-2)$-diagonal $\delta \in \Delta$ such that
$\Delta^{\prime}=\mu_{\delta}\Delta$, where $\mu_{\delta}$ is the flip on the diaginal $\delta$. Different ways of $N$-angulation can be related by flip operations in finite steps.

Let $\mathcal{M}_{d_p^N,d_q^N}$ be the mutation class of $(N-2)$-colored quivers of affine type $A$. There is a surjective function\begin{equation*}
	\sigma_{d_p^N,d_q^N}: \tilde{\mathcal{G}}(P_{d_{p}^{N},d_{q}^{N}}) \rightarrow \mathcal{M}_{d_p^N,d_q^N}
\end{equation*}
given by $\sigma_{d_p^N,d_q^N}(\Delta)=Q_{\Delta}$.
This function is not injective because of the following reason.
Define an operation $[s]$ on an $N-2$-diagonal by rotating the exterior polygon $s$ steps clockwise and the interior polygon $s$ steps counterclockwise. Let $\Delta$ be an $N$-angulation of $P_{d_{p}^{N},d_{q}^{N}}$. By applying $[s]$ on each diagonal in $\Delta$, we obtain a new $N$-angulation $\Delta[s]$. The corresponding colored quiver is the same $Q_{\Delta[s]}=Q_{\Delta}$ since $(N-2)$-diagonals bounding a common $N$-gons of $\Delta$ also bound a common  $N$-gons in $\Delta[s]$.

To establish a bijection between $N$-angulations and the corresponding colored quivers, we need to quotient $\tilde{\mathcal{G}}(P_{d_{p}^{N},d_{q}^{N}})$ by some relations \cite{Tor12}.
We define two functions $r_E$ and $r_I$ on the set of diagonals.
Let $\alpha$ be a diagonal of an $N$-angulation. Define $r_E (\alpha)$ to be the diagonal by rotating 
the exterior polygon one step clockwise.
Define $r_I(\alpha)$ to be the diagonal by rotating the interior polygon one step counterclockwise.
$r_E(\Delta)$ ($r_I(\Delta)$) be the new $N$-angulation obtained from $r_E$ ($r_I$) applying on each diagonal of $\Delta$.  

If $p\neq q$, two $N$-angulations $\Delta$ and $\Delta'$ to be equivalent if and only if $\Delta'=r_E^i r_I^j(\Delta) $ for some integer $i$ and $j$. If $p=q$, then $\Delta'=r_E^i r_I^j\epsilon^k(\Delta)$, where $\epsilon$ is an operation sending $P_{d_p^N,d_q^N}$ to $P_{d_q^N,d_p^N}$ by interchanging the interior polygon and the exterior polygon. The main result of \cite[Theorem 8.11]{Tor12} is the following:
\begin{thm}\label{equiv}
	Let $\tilde{\mathcal{G}}(P_{d_{p}^{N},d_{q}^{N}})/\!\!\sim $ be the equivalent class of $N$-angulations. Then there is a bijective map\begin{equation*}
		\widetilde{\sigma}_{d_p^N,d_q^N}: \tilde{\mathcal{G}}(P_{d_{p}^{N},d_{q}^{N}})/\!\!\sim \,\rightarrow \mathcal{M}_{d_p^N,d_q^N}.
	\end{equation*}
\end{thm}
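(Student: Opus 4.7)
The plan is to establish three things: (i) $\widetilde{\sigma}_{d_p^N,d_q^N}$ is well-defined on the quotient, (ii) it is surjective, and (iii) it is injective, with (iii) being the main content. Well-definedness amounts to showing that the generators $r_E$, $r_I$, and (when $p = q$) the exchange $\epsilon$ of the equivalence relation preserve the colored quiver $Q_\Delta$. Each of these operations is a rigid symmetry of the marked annulus $P_{d_p^N,d_q^N}$ that preserves both the incidence of $(N-2)$-diagonals with $N$-gons and the counterclockwise count of boundary edges between consecutive diagonals; since these data determine $Q_\Delta$, one obtains $Q_{r_E(\Delta)} = Q_{r_I(\Delta)} = Q_{\epsilon(\Delta)} = Q_\Delta$. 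Surjectivity is immediate from the surjectivity of the already-defined $\sigma_{d_p^N,d_q^N}$.

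For injectivity, I would argue by induction on mutation distance, using the commutation between $N$-angulation flips and colored quiver mutations supplied by the preceding proposition. Fix any base $N$-angulation $\Delta_0$ with quiver $Q_0$. Given $\Delta, \Delta'$ with $Q_\Delta = Q_{\Delta'} = Q$, choose a mutation path $Q_0 \to Q_1 \to \cdots \to Q_k = Q$ in $\mathcal{M}_{d_p^N,d_q^N}$ and lift it to a sequence of flips starting from $\Delta_0$, producing some $\Delta''$ with $Q_{\Delta''} = Q$. Applying this lifting procedure to both $\Delta$ and $\Delta'$ reduces the problem to the following rigidity statement: if two $N$-angulations have the same colored quiver $Q$, then they differ by an element of $\langle r_E, r_I \rangle$ (resp.\ $\langle r_E, r_I, \epsilon \rangle$ when $p = q$). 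To prove this rigidity, I would exploit the fact that every $N$-angulation contains at least one Type 1 diagonal. Pick such a $\delta \in \Delta$ joining exterior vertex $a$ to interior vertex $b$, and a Type 1 diagonal $\delta' \in \Delta'$ corresponding to the same vertex of $Q$, joining $a'$ to $b'$. The combination $r_E^{a - a'} r_I^{b' - b}$ aligns $\delta'$ to $\delta$, after which the remaining $(N-2)$-diagonals are reconstructed inductively: each colored arrow from a placed diagonal encodes both which $N$-gon two diagonals share and the number of boundary edges separating them, so one propagates from $\delta$ to its neighbors, then to theirs, until the whole angulation is pinned down.

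The main obstacle is this rigidity step, and two features in particular deserve careful handling. First, one must characterize Type 1 vertices of the quiver intrinsically so that a compatible $\delta' \in \Delta'$ can be selected for any choice of $\delta \in \Delta$; this should follow from the cyclic subquiver pattern induced by the exterior and interior polygons and the arrows of color $0$ that bridge them. Second, when $p = q$ the exchange $\epsilon$ can swap the roles of interior and exterior polygons, so the alignment must be attempted under both orientations; the symmetry $\epsilon$ in the definition of $\sim$ is designed precisely to absorb this ambiguity. Once these two points are resolved, the propagation argument becomes a straightforward induction on the number of $N$-gons traversed in the dual graph of the angulation, yielding a rigid reconstruction of $\Delta$ from $\delta$ and $Q$, and hence the required equivalence $\Delta' \sim \Delta$.
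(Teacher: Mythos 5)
The paper itself contains no proof of this theorem to compare against: it is quoted directly from Torkildsen \cite{Tor12} (``The main result of \cite{Tor12} is the following''), and Torkildsen's argument runs through the geometric realization of the higher cluster category of type $\tilde{A}$ (arcs correspond to rigid indecomposables, non-crossing to Ext-vanishing) combined with the Buan--Thomas theory of colored quivers of cluster-tilting objects, not through a direct combinatorial reconstruction. So your proposal is necessarily an independent route, and it must be judged on its own completeness. Your treatment of well-definedness ($r_E$, $r_I$, $\epsilon$ are rigid symmetries preserving incidence and edge-counts) and of surjectivity is fine.

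The injectivity argument, however, has a genuine gap, in two places. First, the mutation-path ``reduction'' is vacuous: the rigidity statement you reduce to --- two $N$-angulations with the same colored quiver differ by an element of $\langle r_E, r_I\rangle$ (resp.\ $\langle r_E, r_I,\epsilon\rangle$) --- \emph{is} the injectivity claim, so lifting a mutation path from $\Delta_0$ accomplishes nothing; if you instead meant to induct on mutation distance you would additionally need that every angulation realizing $Q_k$ arises as a flip of one realizing $Q_{k-1}$, which you do not establish. Second, the rigidity step itself is the entire content of the theorem and is only sketched. The equality $Q_\Delta=Q_{\Delta'}$ is an equality of isomorphism classes of colored quivers, so ``a Type 1 diagonal $\delta'\in\Delta'$ corresponding to the same vertex of $Q$'' presupposes a chosen isomorphism, and you must show either that some choice leads to a successful alignment or that the outcome is independent of the choice up to the allowed rotations. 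Moreover, your propagation needs (i) an intrinsic, purely quiver-theoretic criterion distinguishing vertices of Type 1, 2 and 3 diagonals (each diagonal bounds \emph{two} $N$-gons, and the quiver does not label which neighbors lie in which $N$-gon, nor does it record boundary edges except as counts between consecutive diagonals), and (ii) a verification that the local gluing data determine the global winding of the angulation around the annulus. You correctly identify these as the obstacles, but ``should follow from the cyclic subquiver pattern'' is a placeholder, not a proof; as it stands the argument establishes the easy directions and leaves the hard one open.
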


\subsection{Cluster categories and exchange graphs}

Let $Q$ be an acyclic quiver and $D^b(kQ)$ be the bounded derived category of the finite dimensional left module over the path algebra of $Q$.
The Auslander-Reiten translation $\tau$ is an autoequivalence on $D^b(kQ)$ such that $[1]\tau$ is a Serre functor.
The $(N-2)$-cluster category $\mathcal{C}_{N-2}(Q)$ is 
defined to be the orbit category 
\begin{equation*}
	\mathcal{C}_{N-2}(Q):=D^b(kQ) \slash \left<\tau^{-1} \circ [N-2] \right>.
\end{equation*}

Assume that $Q$ has $n$ vertices. 
An $(N-2)$-cluster tilting set $\{Y_i\}_i^n$ in $\mathcal{C}_{N-2}(Q)$ is a maximal collection of non-isomorphic indecomposables with $Ext_{\mathcal{C}_{N-2}(Q)}^{k}(Y_i,Y_j)=0$ for all $1\leq k\leq N-2$.
The sum $Y=\bigoplus_{i=1}^{n}Y_i$ is called an ($N-2$)-cluster tilting object. One can construct a new 
($N-2$)-cluster tilting object $\mu_i^{\sharp} Y$, called the forward  mutation \cite{KQ} of $Y$ at the $i$-th object $Y_i$ defined by replacing $Y_i$
\begin{equation*}
	Y_i^{\sharp}=\mathrm{Cone}(Y_i\rightarrow\bigoplus_{j\neq i}\mathrm{Irr}(Y_i,Y_j)^*\otimes Y_j)
\end{equation*}
where $\mathrm{Irr}(Y_i,Y_j)$ is a space of irreducible maps $Y_i\rightarrow Y_j$ in the additive subcategory of $\mathcal{C}_{N-2}(Q)$. The cluster exchange grpah $\mathrm{CEG}_{N-2}(Q)$ is an oriented graph whose vertices are $(N-2)$-cluster objects and whose labeled edges are forward mutations. For each ($N-2$)-cluster tilting object, we can define an ($N-2$)-colored quiver $Q(Y)$ \cite[section 4]{KQ}.

We set $D(Q_N):=D_{fd}(k\Gamma_NQ)$ the derived category of dg modules over $k\Gamma_NQ$ with finite total dimension. Define the exchange graph $\EG(D(Q_N))$ of $D(Q_N)$ whose vertices are finite hearts of $D(Q_N)$, and edges are forward tiltings between hearts. The connected subgraph $\EG^{\circ}(D(Q_N)) \subset \EG(D(Q_N))$ is 
defined to be the connected component which contains the standard heart.
King and Qiu established the relation between and cluster exchange graph $\mathrm{CEG}_{N-2}(Q)$ and the exchange graph $\EG(D(Q_N))$ (\cite{KQ}, Theorem 8.6). When $Q$ is a quiver of affine type $A$, we have the following.

\begin{thm}[\cite{KQ}, Theorem 8.6]\label{KG}
	There is an isomorphism of oriented graphs with labeled edges
	\begin{equation*}
		\EG^{\circ}(D(\tilde{A}_{n,N})) / \mathrm{Sph}(\tilde{A}_{n,N})
		\cong \mathrm{CEG}_{N-2}(\tilde{A}_{n,N}).
	\end{equation*}
\end{thm}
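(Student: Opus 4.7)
The plan is to invoke the general framework of King--Qiu and verify that every step of their proof for Dynkin quivers carries over to the affine type $A$ setting. The central construction is a cluster projection on the level of exchange graphs. Given a finite heart $\mathcal{H} \in \EG^{\circ}(D(\tilde{A}_{n,N}))$ with simple objects $S_1,\ldots,S_n$, I would send $\mathcal{H}$ to the cluster tilting object $\bigoplus_{i=1}^n \pi(S_i) \in \mathcal{C}_{N-2}(\tilde{A}_n)$, where $\pi : D(\tilde{A}_{n,N}) \to \mathcal{C}_{N-2}(\tilde{A}_n)$ is the natural orbit projection induced by $\tau^{-1}\circ [N-2]$. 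First I would check that each $\pi(S_i)$ is indecomposable and that $\mathrm{Ext}^k_{\mathcal{C}_{N-2}}(\pi S_i, \pi S_j)=0$ for $1 \le k \le N-2$, using the Calabi--Yau-$N$ pairing on $D(\tilde{A}_{n,N})$ together with the heart axioms; this guarantees well-definedness on vertices.

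Second I would verify that the map respects the labelled oriented graph structure. A forward tilt of $\mathcal{H}$ at the simple $S_i$ produces a new heart $\mathcal{H}'$ whose simples are governed by the standard triangles of simple tilting; the image under $\pi$ matches the forward mutation $\mu^{\sharp}_i$ of the cluster tilting object in $\mathcal{C}_{N-2}(\tilde{A}_n)$, because $[1]$ and $\tau^{-1}\circ[N-2]$ become interchangeable in the orbit category. Combined with Corollary \ref{Torkildsen}, which identifies $\mathrm{CEG}_{N-2}(\tilde{A}_{n,N})$ with the mutation class realised by flips of $N$-angulations, this yields a surjective morphism of labelled oriented graphs from $\EG^{\circ}(D(\tilde{A}_{n,N}))$ onto $\mathrm{CEG}_{N-2}(\tilde{A}_{n,N})$.

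Third, I would identify the fibres of this map with orbits of $\mathrm{Sph}(\tilde{A}_{n,N})$. The inclusion ``spherical-twist orbits $\subseteq$ fibres'' is immediate since a spherical twist sends a heart to a heart whose image in $\mathcal{C}_{N-2}$ agrees with the original (the defining triangle of $\mathrm{Tw}_S$ collapses modulo $\tau^{-1}\circ[N-2]$). For the reverse inclusion, suppose two hearts $\mathcal{H}, \mathcal{H}'$ project to the same cluster tilting object; a silting-reduction argument inductively builds a composition of spherical twists $\Psi$ with $\Psi(\mathcal{H}) = \mathcal{H}'$, matching simples one vertex at a time. The uniqueness and non-triviality of such $\Psi$ rely on Theorem \ref{faithful}, which identifies $\mathrm{Sph}(\tilde{A}_{n,N})$ with the affine braid group $B_{\tilde{A}_n}$ acting faithfully.

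The main obstacle is precisely this last identification of fibres with spherical-twist orbits, because the affine setting is not Dynkin and the cluster category has less rigid combinatorics than in finite type. In particular one must check that the central element $\rho^n$ of the extended affine braid group acts trivially after passage to $\mathcal{C}_{N-2}(\tilde{A}_n)$ but non-trivially on $\EG^{\circ}$, so that the quotient in the statement is genuinely by $\mathrm{Sph}(\tilde{A}_{n,N})$ and not by a larger group. Here Lemma \ref{iso} and the second grading introduced via $T_\rho$ play a decisive role: they show that the residual freedom visible at the derived level collapses correctly in the orbit category. Once this is in place, matching of edge labels is automatic from the construction, and the isomorphism of labelled oriented graphs follows.
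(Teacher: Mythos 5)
The paper does not prove this statement: it is imported wholesale as \cite{KQ}, Theorem 8.6, and the only thing the paper implicitly needs is that King--Qiu's hypotheses (an acyclic quiver $Q$) cover $\tilde{A}_n$. So there is no in-paper argument to compare yours against; what can be assessed is whether your reconstruction would actually close the gap if one insisted on a self-contained proof, and there I see two problems.

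First, the entire content of the theorem sits in the step you defer to ``a silting-reduction argument inductively builds a composition of spherical twists.'' The inclusion of $\mathrm{Sph}$-orbits into fibres is easy, as you say; the converse is the theorem. In King--Qiu this is handled not by analysing fibres of a globally defined projection but by exhibiting a fundamental domain for the $\mathrm{Sph}$-action inside $\EG^{\circ}$ (the hearts sandwiched between a fixed heart and its $[N-2]$-shift), proving by induction along tilting sequences that every heart in $\EG^{\circ}$ is $\mathrm{Sph}$-equivalent to exactly one heart in that domain, and then identifying the domain with $\mathrm{CEG}_{N-2}$ via the orbit functor. Your sketch asserts the conclusion of that induction without supplying it, and also omits the maximality check needed for $\bigoplus_i \pi(S_i)$ to be a cluster tilting object (rigidity alone, which is all you verify, is not enough). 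Second, the paragraph about $\rho^n$ is a misdirection: $\mathrm{Sph}(\tilde{A}_{n,N})$ is by definition the subgroup generated by $\mathrm{Tw}_{S_1},\dots,\mathrm{Tw}_{S_n}$, so the quotient in the statement is by that group whatever its abstract structure; the cyclic generator $\rho$ of the extended braid group is not among the generators of $\mathrm{Sph}$, and Lemma \ref{iso} together with the second grading $\langle-\rangle$ on $R_n$ belongs to the faithfulness argument of Theorem \ref{faithful} in Section 2, not to the exchange-graph isomorphism. Likewise Theorem \ref{faithful} is not needed here and cannot be used without circularity concerns about which results depend on which. The first two paragraphs of your plan are a reasonable account of the King--Qiu strategy, but as written the proposal does not constitute a proof.
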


Let $\mathcal{H}\in\EG^{\circ}(D(\tilde{A}_{n,N}))$ be a finite heart and $Y\in\mathrm{CEG}_{N-2}(\tilde{A}_{n,N})$
be the corresponding ($N-2$)-cluster tilting object by the above theorem. Then we have 
\begin{equation}\label{QHY}
	Q(\mathcal{H})=Q(Y).
\end{equation}

The mutation class of colored quivers are in bijection to mutation class of $(N-2)$-cluster tilting objects.
Theorem \ref{equiv} can be restated as the following:
\begin{cor}\label{T}
	There is an isomorphism of oriented graphs with labeled arrows 
	\begin{equation*}
		\tilde{\mathcal{G}}(P_{d_{p}^{N},d_{q}^{N}}) \xto{\sim} \mathrm{CEG}_{N-2}(\tilde{A}_{n,N}).
	\end{equation*}  
\end{cor}

Combining corollary \ref{T} and theorem \ref{KG}, we have the geometric description of the heart exchange graph for a quiver of affine type $A$.
\begin{cor}\label{main}
	There is an isomorphism of oriented graphs with labeled edges between the set of $N$-angulation of annulus and the corresponding cluster exchange graph. we obtain an isomorphism of oriented graphs with labeled arrows:
	\begin{equation*}
		\tilde{\mathcal{G}}(P_{d_{p}^{N},d_{q}^{N}}) \xto{\sim} \EG^{\circ}(D(\tilde{A}_{n,N}))  /\mathrm{Sph}(\tilde{A}_{n,N}).
	\end{equation*}
	For $\Delta\in\tilde{\mathcal{A}}(P_{d_{p}^{N},d_{q}^{N}})$ and the corresponding heart $H(\Delta)\in \EG^{\circ}(D(\tilde{A}_{n,N}))$, there is a canonical bijection between $(N-2)$-diagonals $\{\delta_1,\cdots\delta_n\}$ and simple objects $\{S_1,\cdots S_n\}$ by sending $\delta_i$ to $S_i$.
\end{cor}
Corollary \ref{main} implies the following result.
\begin{lem}
	\label{HS}
	Let $\Delta\in\tilde{\mathcal{A}}(P_{d_{p}^{N},d_{q}^{N}})$ be an $N$-angulation and $\mathcal{H}(\Delta) \in \EG_N^{\circ}(D(\tilde{A}_{n,N}))$ 
	be the corresponding heart. 
	Take a $(N-2)$-diagonal $\delta_v \in \Delta$ and consider 
	the new $N$-angulation $\mu_{\delta_v}^{\sharp}\Delta$. 
	Then there is a unique spherical twist $\Phi \in \mathrm{Sph}(\tilde{A}_{n,N}) $ such that 
	\begin{equation*}
		\Phi \cdot \mathcal{H}(\mu_{\delta_v}^{\sharp}\Delta)=
		\mathcal{H}(\Delta)_{S_v}^{\sharp}.
	\end{equation*}
\end{lem}
\begin{proof}
	By corollary \ref{main}, such $\Phi$ exists. By \cite[proposition 4.17]{Ike16}, the action of $\mathrm{Sph}(\tilde{A}_{n,N})$ on $\EG_N^{\circ}(D(\tilde{A}_{n,N}))$ is free.
\end{proof}	

By the construction in section 4.1 and corollary \ref{T}, the colored quiver $Q_{\Delta}$ is equal to the colored quiver $Q(Y_{\Delta})$ associated to the cluster tilting object $Y_{\Delta}$. The formula (\ref{QHY}) implies that the colored quiver $Q(Y_{\Delta})$ is the same as $Q(\mathcal{H}(\Delta))$. Therefore there is an isomorphism of colored quivers
\begin{equation}\label{QH}
	Q_{\Delta}=Q(\mathcal{H}(\Delta)).
\end{equation}

\subsection{Colored quiver lattices}
\label{sec_mutation_K}

We define a lattice associated to a 
colored quiver \cite{Ike16}. For an $(N-2)$-colored quiver  $Q$ with vertices $\{1,\dots,n\}$, 
we introduce a free abelian group $L_{Q}$ 
generated by vertices of $Q$. 
Denote by $\alpha_1,\dots,\alpha_n$ generators corresponding to vertices $1,\dots,n$. 
Then we have 
\begin{equation*}
	L_{Q} = \bigoplus_{i=1}^n \mathbb{Z} \,\alpha_i.
\end{equation*} 
Define the bilinear form $\left<\,,\,\right> \colon Q \times Q \lto \mathbb{Z}$ by 
\begin{equation*}
	\left<\alpha_i,\alpha_j \right> := \delta_{ij} + (-1)^{N}\delta_{ij} -\sum_{c=0}^{N-2} (-1)^c q_{ij}^{(c)}.
\end{equation*}
Since $q_{ij}^{(c)}= q_{ji}^{(N-2-c)}$, the bilinear form 
$\left<\,,\,\right>$ is symmetric if $N$ is even, 
and skew-symmetric if $N$ is odd.

Let $Q(\mathcal{H})$ be the colored quiver 
of a heart $\mathcal{H} \in \EG^{\circ}(D(Q_N))$ with 
simple objects $S_1,\dots,S_n$ and denote by $\alpha_i$ the basis of 
colored quiver lattice $L_{Q(\mathcal{H})}$ 
which corresponds to a vertex $i$ of $Q(\mathcal{H})$. 
Then there is an isomorphism of abelian groups \cite[Lemma 4.27]{Ike16},
\begin{equation}\label{LK}
	\lambda_{\mathcal{H}} \colon L_{Q(\mathcal{H})} \lto K(\mathcal{H}) \cong K(D(Q_N))
\end{equation}
defined by $\alpha_i \mapsto [S_i]$. The map $\lambda$ 
takes the bilinear form $\left<\,,\,\right>$ on $L_{Q(\mathcal{H})}$ 
to the Euler form $\chi$ on $K(D(Q_N))$.

We write by $\Delta_{\phi}$ the $N$-angulation 
determined by a saddle-free differential $\phi \in \mathrm{Quad}(n,N)$. 
We also write by $h(\delta)$ the horizontal strip which contains a generic trajectory $\delta$.  We can associate standard saddle classes $\gamma_1,\dots,
\gamma_n \in H_{\pm}(\phi)$ to horizontal strips $h(\delta_1),
\dots, h(\delta_ n)$. Set $\Delta_{\phi}=\{\delta_1,\dots,\delta_n\}$ 
and consider the colored quiver lattice 
\begin{equation*}
	L_{Q(\Delta_{\phi})}=\oplus_{i=1}^n \mathbb{Z} \alpha_i
\end{equation*}
where $\alpha_1,\dots,\alpha_n$ are the basis 
corresponding to $\delta_1,\dots,\delta_n$.

Thus, we can establish an isomorphism between the period map of the quadratic differential and the Grothendieck group on the derived category $D(\tilde{A}_{n,N})$.
\begin{cor} 
	\label{cor_PC}
	There is an isomorphism of 
	abelian groups
	\begin{equation}
		\nu_{\phi} \colon K(D(\tilde{A}_{n,N})) \cong K(\mathcal{H}(\Delta_{\phi}))\to H_{\pm}(\phi)
	\end{equation}  
	defined by $[S_i] \mapsto \gamma_i$. 
	The map $\nu_{\phi}$ takes  
	the Euler form $\chi$ on $K(D(\tilde{A}_{n,N}))$ to the intersection form $I_{\pm}$ on $H_{\pm}(\phi)$. 
\end{cor}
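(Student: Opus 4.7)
The plan is to realize $\nu_\phi$ as the composition of two isomorphisms already introduced, using the bijection between $(N-2)$-diagonals and simple objects of the heart provided by Corollary \ref{main}. Concretely, given a saddle-free differential $\phi \in \mathrm{Quad}(N,n)$ with associated $N$-angulation $\Delta_\phi = \{\delta_1,\dots,\delta_n\}$, Corollary \ref{main} matches each $\delta_i$ with a simple object $S_i$ of $\mathcal{H}(\Delta_\phi)$, and in particular identifies the colored quiver $Q(\Delta_\phi)$ with the colored quiver $Q(\mathcal{H}(\Delta_\phi))$ of the heart. Under this identification the two colored quiver lattices $L_{Q(\Delta_\phi)}$ and $L_{Q(\mathcal{H}(\Delta_\phi))}$ become the same free abelian group $\bigoplus_{i=1}^n \mathbb{Z}\alpha_i$, and their bilinear forms $\left<\,,\,\right>$ coincide because they are both computed from the same colored quiver data.

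First, I would invoke the isomorphism
\begin{equation*}
\lambda_{\mathcal{H}(\Delta_\phi)} \colon L_{Q(\mathcal{H}(\Delta_\phi))} \iso K(\mathcal{H}(\Delta_\phi)) \cong K(D(\tilde{A}_{n,N})),\qquad \alpha_i \mapsto [S_i],
\end{equation*}
which by the discussion preceding Lemma \ref{homology_quiver} takes $\left<\,,\,\right>$ to the Euler form $\chi$. Next, I would invoke Lemma \ref{homology_quiver}, giving
\begin{equation*}
\mu_\phi \colon L_{Q(\Delta_\phi)} \iso H_{\pm}(\phi),\qquad \alpha_i \mapsto \gamma_i,
\end{equation*}
which takes $\left<\,,\,\right>$ to the intersection form $I_{\pm}$.

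Then I define
\begin{equation*}
\nu_\phi := \mu_\phi \circ \lambda_{\mathcal{H}(\Delta_\phi)}^{-1} \colon K(D(\tilde{A}_{n,N})) \lto H_{\pm}(\phi).
\end{equation*}
On simple objects, $\nu_\phi([S_i]) = \mu_\phi(\alpha_i) = \gamma_i$, which is the required formula. Since both $\lambda_{\mathcal{H}(\Delta_\phi)}$ and $\mu_\phi$ are isomorphisms of abelian groups, so is $\nu_\phi$. To check that the Euler form is carried to the intersection form, let $x,y \in K(D(\tilde{A}_{n,N}))$, and write $x = \lambda_{\mathcal{H}(\Delta_\phi)}(a)$, $y = \lambda_{\mathcal{H}(\Delta_\phi)}(b)$ with $a,b \in L_{Q(\Delta_\phi)}$; then
\begin{equation*}
\chi(x,y) = \left<a,b\right> = I_{\pm}(\mu_\phi(a),\mu_\phi(b)) = I_{\pm}(\nu_\phi(x),\nu_\phi(y)).
\end{equation*}

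The only subtle point — and in some sense the main thing to be careful about — is that the two colored quiver lattices $L_{Q(\Delta_\phi)}$ and $L_{Q(\mathcal{H}(\Delta_\phi))}$ really are identified compatibly, i.e. the colored quiver assigned to the $N$-angulation coincides with the colored quiver assigned to the heart under the bijection $\delta_i \leftrightarrow S_i$. This, however, is precisely the content of Corollary \ref{main} together with the construction of $Q_{\Delta}$ and $Q(\mathcal{H})$; beyond this identification, the argument is a formal composition.
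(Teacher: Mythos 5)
Your proposal is correct and matches the argument the paper intends: the corollary is obtained by composing the lattice isomorphism $\lambda_{\mathcal{H}(\Delta_\phi)}$ (which carries $\left<\,,\,\right>$ to $\chi$) with the isomorphism $\mu_\phi$ of Lemma \ref{homology_quiver} (which carries $\left<\,,\,\right>$ to $I_{\pm}$), using Corollary \ref{main} to identify the colored quiver of the $N$-angulation with that of the heart. The paper leaves this composition implicit (deferring to \cite{Ike16}), and your write-up simply makes it explicit.
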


\begin{proof}
	
	The argument of \cite[Corollary 6.13]{Ike16} for the quiver of type $A$ can be applied to the case of the  affine type $A$ quiver.
	There is an isomorphism of abelian groups
	\begin{equation}\label{LH}
		\mu_{\phi} \colon L_{Q(\Delta_{\phi})}
		\to  H_{\pm}(\phi)
	\end{equation}
	defined by $\alpha_i \mapsto \gamma_i$. The map $\mu_{\phi}$
	takes the bilinear form $\left<\,,\,\right>$ on 
	$L_{\mathcal{Q}(\Delta_{\phi})}$ to 
	the intersection form $I_{\pm}$ on $H_{\pm}(\phi)$. By (\ref{QH}), 
	there is an isomorphism $L_{Q(\Delta_{\phi})}\simeq L_{Q(\mathcal{H}(\Delta_{\phi}))}$. Combining (\ref{LK}), we have the result. 
\end{proof}

\section{Bridgeland Stability Conditions}
Bridgeland introduced the notion of stability conditions on triangulated categories \cite{Br1}. We recall some results for the space of stability conditions.
\begin{defi}
	Let $\D$ be a triangulated category and $K(\D)$ be its $K$-group. 
	A stability condition $\sigma = (Z, \sli)$ on $\D$ which consists of 
	a group homomorphism $Z \colon K(\D) \to \mathbb{C}$ called central charge and 
	a family of full additive subcategories $\sli (\theta) \subset \D$ for $\theta \in \mathbb{R}$ 
	satisfying the following conditions:
	\begin{itemize}
		\item[(a)]
		if  $0 \neq E \in \sli(\theta)$, 
		then $Z(E) = m(E) \exp(i \pi \theta)$ for some $m(E) \in \mathbb{R}_{>0}$,
		\item[(b)]
		for all $\theta \in \mathbb{R}$, $\sli(\theta + 1) = \sli(\theta)[1]$, 
		\item[(c)]if $\theta_1 > \theta_2$ and $A_i \in \sli(\theta_i)\,(i =1,2)$, 
		then $\Hom_{\D}(A_1,A_2) = 0$,
		\item[(d)]for $0 \neq E \in \D$, there is a finite sequence of real numbers 
		\begin{equation*}
			\theta_1 > \theta_2 > \cdots > \theta_m
		\end{equation*}
		and a collection of exact triangles
		\begin{equation*}
			0 =
			\xymatrix @C=5mm{ 
				E_0 \ar[rr]   &&  E_1 \ar[dl] \ar[rr] && E_2 \ar[dl] 
				\ar[r] & \dots  \ar[r] & E_{m-1} \ar[rr] && E_m \ar[dl] \\
				& A_1 \ar@{-->}[ul] && A_2 \ar@{-->}[ul] &&&& A_m \ar@{-->}[ul] 
			}
			= E
		\end{equation*}
		with $A_i \in \sli(\theta_i)$ for all $i$.
	\end{itemize}
\end{defi}

It follows from the definition that the subcategory 
$\sli(\theta) \subset \D$ is an abelian category 
(see Lemma 5.2 in \cite{Br1}). 
Nonzero objects in $\sli(\theta)$ are called semistable of phase $\theta$ in $\sigma$ and simple objects 
in $\sli(\theta)$ are called stable of phase $\theta$ in $\sigma$. 

We also assume the stability conditions $\sigma=(Z,\mathcal{P})$ satisfy the support property:
For some norm $\parallel\cdot\parallel$ on $k(\D)\otimes\mathbb{R}$  there is a constant $C>0$ such that \begin{equation*}
	\parallel\gamma\parallel<C\cdot|Z(\gamma)|
\end{equation*}
for all classes $\gamma\in K(\D)$ of semistable objects in $\D$. Bridgeland also introduced a topology on the stability space with support property and showed that it is a complex manifold.
\begin{thm}[\cite{Br1}, Theorem 1.2]
	\label{localiso}
	The space $\Stab(\D)$ has the structure of a complex manifold and 
	the projection map of central charges
	\begin{equation*}
		\mathcal{Z} \colon \Stab(\D) \longrightarrow \Hom_{\mathbb{Z}}(K(\D),\mathbb{C})
	\end{equation*}
	defined by $(Z,\sli) \mapsto Z$ is a local isomorphism of complex manifolds 
	onto an open subset of $\Hom_{\mathbb{Z}}(K(\D),\mathbb{C})$. 
\end{thm}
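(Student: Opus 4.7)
The plan is to construct a generalized metric on $\Stab(\D)$, show that $\mathcal{Z}$ is a local homeomorphism onto an open subset of the complex vector space $\Hom_{\mathbb{Z}}(K(\D),\mathbb{C})$, and then transport the complex structure through $\mathcal{Z}$. Throughout, the support property will be essential for controlling how semistable objects vary as the stability condition is deformed.

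First I would put a topology on $\Stab(\D)$ using the generalized metric
\begin{equation*}
d(\sigma,\tau) = \sup_{0 \neq E \in \D}\!\Big\{ \bigl|\phi^+_{\sigma}(E)-\phi^+_{\tau}(E)\bigr|,\; \bigl|\phi^-_{\sigma}(E)-\phi^-_{\tau}(E)\bigr|,\; \bigl|\log\tfrac{m_{\tau}(E)}{m_{\sigma}(E)}\bigr|\Big\},
\end{equation*}
where $\phi^{\pm}$ and $m$ are the extremal phases and mass coming from the Harder--Narasimhan filtration. On the target I would use the operator norm relative to the fixed norm $\|\cdot\|$ on $K(\D)\otimes\mathbb{R}$. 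A direct comparison between $|Z(E)|$ and $m_\sigma(E)$ on semistable objects then shows that $\mathcal{Z}$ is continuous.

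Next I would prove local injectivity. If $\sigma$ and $\tau$ share the same central charge $Z$ and lie within a small metric ball, then every $\sigma$-semistable object $E$ has $Z(E)\neq 0$, and its $\tau$-semistable factors are forced to have phases arbitrarily close to the $\sigma$-phase of $E$. The support property bounds the classes of all these factors uniformly, so for $d(\sigma,\tau)$ small enough each $\sigma$-semistable object is also $\tau$-semistable of the same phase; hence $\sli=\sli'$. This step is largely formal once the support property is in hand.

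The main work, and the main obstacle, is the deformation theorem: given $\sigma=(Z,\sli)$ with support constant $C$, and any $W\in\Hom_{\mathbb{Z}}(K(\D),\mathbb{C})$ satisfying $\|W-Z\|_{\mathrm{op}} < \sin(\pi\epsilon)/C$ for sufficiently small $\epsilon$, I must produce a slicing $\sli'$ so that $(W,\sli')$ is a stability condition and $d(\sigma,(W,\sli'))<\epsilon$. The construction proceeds by refining the $\sigma$-HN filtration: within each sliver $\sli((\phi-\epsilon,\phi+\epsilon))$, which is quasi-abelian, the new central charge $W$ still has strictly positive imaginary part on nonzero objects (this is where the support bound is used to keep classes of semistable objects inside the half-plane determined by $W$), so one can perform an HN decomposition with respect to $W$ inside each sliver. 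Patching these together produces $\sli'$, and one verifies the axioms (a)--(d) together with the support property for $(W,\sli')$; continuity of the inverse map follows from the Lipschitz estimates built into the construction.

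Finally, with $\mathcal{Z}$ established as a local homeomorphism, I would use the complex-vector-space structure on $\Hom_{\mathbb{Z}}(K(\D),\mathbb{C})$ to pull back charts: on each open set where $\mathcal{Z}$ is a homeomorphism, declare $\mathcal{Z}$ to be a biholomorphism onto its image. Transition maps between two such charts are restrictions of the identity of $\Hom_{\mathbb{Z}}(K(\D),\mathbb{C})$ to an open subset, hence holomorphic, so the atlas is well defined and $\Stab(\D)$ acquires a complex-manifold structure making $\mathcal{Z}$ a local isomorphism of complex manifolds. The hard part remains the deformation step, where the support property is precisely what prevents pathological concentration of semistable classes that would obstruct the construction of $\sli'$.
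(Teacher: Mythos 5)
The paper gives no proof of this statement; it is quoted verbatim as Theorem 1.2 of \cite{Br1}, and your outline is an accurate sketch of Bridgeland's own argument --- the generalized metric on $\Stab(\D)$, local injectivity of $\mathcal{Z}$, the deformation theorem via Harder--Narasimhan refinement inside the thin quasi-abelian slivers $\mathcal{P}((\phi-\epsilon,\phi+\epsilon))$, and transport of the complex structure from $\Hom_{\mathbb{Z}}(K(\D),\mathbb{C})$ --- recast in the support-property formulation that this paper adopts. Since this is exactly the proof the citation points to, your approach coincides with the paper's and no correction is needed.
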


We denote by $\Stab(D(\tilde{A}_{n,N}))$ the space of stability conditions of the derived category of right modules over the dg Ginzburg algebra of the quiver of affine type $A$ with the support property. We denote by $\Stab^{\circ}(D(\tilde{A}_{n,N}))$ the distinguished connected component which contains the connected subset containing the standard heart. 

By composing an isomorphism $\nu_{\phi}:K(\mathcal{H}(\Delta_{\phi}))\rightarrow H_{\pm}(\phi)$ and a period of $\phi$, $Z_{\phi}:H_{\pm}(\phi)\rightarrow\mathbb{C}$.  We construct a central charge from a saddle-free differential $Z:=Z_{\phi}\circ \nu_{\phi}:K(\mathcal{H}(\Delta_{\phi}))\rightarrow \mathbb{C}$.

\subsection{Wall-crossing}
We describe the wall-crossing in the moduli space of quadratic differentials. By proposition 5.5 and 5.7 of \cite{BrSm}, we can deform a differential $\phi_1$ which has exactly one saddle trajectory by rotating a sufficiently small phase $t$ to obtain two saddle free differentials:
\begin{equation*}
	\phi=e^{+i\pi t}\phi_1,\quad\phi'=e^{-i\pi t}\phi_1
\end{equation*}  
Generic trajectories in the horizontal decomposition of saddle free differentials $\phi$ and $\phi'$ would determine $N$-angulations $\Delta_{\phi}$ and $\Delta_{\phi'}$ respectively. There is a unique $N-2$ diagonal $\delta_v$ of $\Delta_{\phi}=\{\delta_1\cdots\delta_n\}$ which does not belong to the set of $N-2$ diagonals by taking a generic trajectory in each horizontal strip of $\phi'$. By definition of forward mutated $N$-angulation, two $N$-angulation are related by $\mu^{\sharp}_{\delta_v}(\Delta_{\phi})=\Delta_{\phi'}$ \cite[Lemma 7.5]{Ike16}. 

The corresponding colored quivers $Q(\Delta_{\phi})$ mutated at the vertex $v$ corresponding to the $N-2$ diagonal would be  
\begin{equation*}
	\mu_v^{\sharp}Q(\Delta_{\phi})=Q(\mu_{\delta_v}^{\sharp}\Delta_{\phi})=Q(\Delta_{\phi'}),
\end{equation*}
and there is an isomorphism of colored quiver lattice $L_{\mu_v^{\sharp}Q(\Delta_{\phi})}=L_{Q(\Delta_{\phi'})}$.

For $\phi$ and $\phi'$, we take basis $\{\gamma_1,\cdots\gamma_n\}$ and $\{\gamma'_1,\cdots\gamma'_n\}$ for homology classes $H_{\pm}(\phi)$ and $H_{\pm}(\phi')$,
\begin{equation*}
	H_{\pm}(\phi)=\bigoplus_{i=1}^{n}\mathbb{Z}\gamma_i,\quad H_{\pm}(\phi')=\bigoplus_{i=1}^{n}\mathbb{Z}\gamma_i'
\end{equation*}
and the framing isomorphisms $\psi: \Gamma\rightarrow H_{\pm}(\phi) $ and $\psi': \Gamma\rightarrow H_{\pm}(\phi') $. We connect $(\phi,\psi)$ and $(\phi',\psi')$ by a continuous path in a connected component $\mathrm{Quad}(n,N)^{\Gamma}_*$ containing $(\phi,\psi)$. By proposition 5.5 and 5.7 in \cite{BrSm}, this path can be deformed to lie in the subset which contains at most one saddle trajectory  and intersects transversally to the locally closed subset which contains just one saddle trajectory.

Denote by $\{\alpha_1,\cdots\alpha_n\}$ and $\{\alpha'_1,\cdots\alpha'_n\}$  the collection of generators corresponding to vertices $1,\cdots,n$ for the colored quiver lattices $L_{Q(\Delta_{\phi})}$ and $L_{Q(\Delta_{\phi'})}$:
\begin{equation*}
	L_{Q(\Delta_{\phi})}=\bigoplus_{i=1}^{n}\mathbb{Z}\alpha_i,\quad L_{Q(\Delta_{\phi'})}=\bigoplus_{i=1}^{n}\mathbb{Z}\alpha_i'.
\end{equation*}
Define a linear map $F_v:L_{Q(\Delta_{\phi'})}\simeq L_{\mu_v^{\sharp}Q(\Delta_{\phi})}\longrightarrow L_{Q(\Delta_{\phi})}$ by

\begin{equation*}
	F_v(\alpha'_j):=\alpha_i+q^{(0)}_{vj}\alpha_v.
\end{equation*}

We denote by $\mu_{\phi} \colon L_{Q(\Delta_{\phi})} \to H_{\pm}(\phi)$ and
$\mu_{\phi'} \colon L_{Q(\Delta_{\phi'})} \to H_{\pm}(\phi')$ the isomorphisms given by (\ref{LH}) \cite[Lemma 6.12]{Ike16}. 
Thus, we have a commutative diagram:
\begin{equation}\label{56}
	\xymatrix{ 
		H_{\pm}(\phi') \ar[rr]^{\mathrm{GM}_c} && H_{\pm}(\phi)
		&\\
		L_{Q(\Delta_{\phi'})}\ar[u]^{\mu_{\phi'}} \ar[rr]^{\mathrm{F}_v} && L_{Q(\Delta_{\phi})}\ar[u]_{\mu_{\phi}} 
	}
\end{equation}
By lemma \ref{HS} and the relation $\mu^{\sharp}_{\delta_v}(\Delta_{\phi})=\Delta_{\phi'}$, there exists a spherical twist $\Phi$ satisfying $\Phi\cdot \mathcal{H}(\Delta_{\phi'})=\mathcal{H}(\Delta_{\phi})_{S_v}'$. There are also isomorphisms of lattice $\lambda:L_{Q(\Delta)}\rightarrow K(D(\tilde{A}_{n,N}))$ given by $\lambda(\alpha)=[S_i]$ and $\lambda':L_{Q(\Delta_{\phi'})}\rightarrow K(D(\tilde{A}_{n,N}))$ given by $\lambda'(\alpha_i')=[S'_i]$. By lemma 6.10 of \cite{Ike16}, the diagram of colored quiver lattices and $K$ groups of $D(A_{n,N})$ under the wall-crossing:
\begin{equation}\label{27}
	\xymatrix{ 
		L_{Q(\Delta_{\phi'})}\ar[d]_{\lambda_{\phi'}} \ar[rr]^{\mathrm{F}_v}  && L_{Q(\Delta_{\phi})}\ar[d]^{\lambda_{\phi}} 
		&\\
		K(D(\tilde{A}_{n,N}))\ar[rr]^{[\Phi]} &&  K(D(\tilde{A}_{n,N}))
	}
\end{equation}
commutes. Let $\nu_{\phi}:K(D(\tilde{A}_{n,N}))\rightarrow H_{\pm}(\phi)$ and $\nu_{\phi'}:K(D(\tilde{A}_{n,N}))\rightarrow H_{\pm}(\phi')$ be isomrphisms given by $\nu([S_i])=\gamma_i$ and $\nu([S'_i])=\gamma'_i$ as in the corollary \ref{cor_PC}.
Combining diagrams (\ref{56}) and (\ref{27}), we have the commutative diagram.
\begin{equation}\label{30}
	\xymatrix{     
		&K(D(\tilde{A}_{n,N})) \ar[d]_{\nu_{\phi}} \ar[rr]^{[\Phi]} && K(D(\tilde{A}_{n,N}))\ar[d]^{\nu_{\phi'}} &\\
		&H_{\pm}(\phi) \ar[rr]^{\mathrm{GM}_c} && H_{\pm}(\phi')
		&\\
	}
\end{equation}
Let $(\phi,\psi)$ and $(\phi',\psi')$ be framed quadratic differentials, where $\phi$ and $\phi'$ are as above.
We define the isomorphisms $\kappa_{\phi}$ and $\kappa_{\phi}'$ by composition of $\nu_{\phi}^{-1}$ and $\nu_{\phi'}^{-1}$ with the framing isomorphisms $\psi$ and $\psi'$:
\begin{align}\label{GK}
	&\kappa_{\phi}:=\nu_{\phi}^{-1}\circ\psi:\Gamma\rightarrow K(\mathcal{H}(\Delta_{\phi}))\simeq K(D(\tilde{A}_{n,N})),\\
	&\kappa_{\phi'}:=\nu_{\phi'}^{-1}\circ\psi':\Gamma\rightarrow K(\mathcal{H}(\Delta_{\phi'}))\simeq K(D(\tilde{A}_{n,N})).
\end{align}
Let $(\phi_0,\psi_0)\in\mathrm{Quad}(N,n)^{\Gamma}$ be a fixed framed differential and denote by $(\phi,\psi)\in\mathrm{Quad}(N,n)^{\Gamma}_*$ the connected component containing $(\phi_0,\psi_0)$.
\begin{lem}\label{tw}
	Let $(\phi,\psi)\in\mathrm{Quad}(N,n)^{\Gamma}_*$ be a frame saddle-free differentials and $\kappa_{\phi}$ and $\kappa_{\phi'}$ be isomorphisms defined by (\ref{GK}). The there is a spherical twist $\Phi\in\mathrm{Sph}(D(\tilde{A}_{n.N}))$ such that the diagram
	\begin{equation}
		\xymatrix{ 
			& \Gamma \ar[dl]_{\kappa_{\phi}} \ar[dr]^{\kappa_{\phi'}}  & \\ 
			K(D(\tilde{A}_{n,N})) \ar[rr]^{[\Phi]} && K(D(\tilde{A}_{n,N})) 
		}
	\end{equation}
	commutes.
\end{lem}
\begin{proof}
	Combining the diagram (\ref{30}) and the commutative diagram (\ref{gm}), we can see that there is a spherical twist $\Phi\in \mathrm{Sph}(D(\tilde{A}_{n.N}))$ that makes the diagram commutes.
	
\end{proof}	
If there are two such spherical twists $\Phi_1$ and $\Phi_2$, their induced isomorphisms on $K(D(\tilde{A}_{n,N}))$ are the same since
\begin{equation*}
	[\Phi_1]\circ \kappa_{\phi}=\kappa_{\phi'}=[\Phi_2]\circ \kappa_{\phi}.
\end{equation*}

\subsection{Main theorem}
In this subsection, we state the main results of this paper. Donote by $s_{\phi}$ the number of saddle trjectories
Define the subset of $\mathrm{Quad}(N,n)$
\begin{equation*}
	D_p:=\{\phi\in \mathrm{Quad}(N,n)\,|\,s_{\phi}\leq p\,\}
\end{equation*}
There is a stratification of $\mathrm{Quad}(N,n)$
\begin{equation*}
	D_0\subset D_1\subset\cdots D_n=\mathrm{Quad}(N,n).
\end{equation*}
This stratification can also be extended to $\mathrm{Quad}(N,n)^{\Gamma}_*$ by the obvious way.
We can construct a stability condition from a saddle free differential $\phi\in\mathrm{Quad}(N,n)$.
\begin{lem}\cite[Lemma 7.4]{Ike16}
	Let $\phi\in D_0\subset\mathrm{Quad}(N,n)$ be a saddle free diffewrential. Consider the correponding hear $\mathcal{H}(\Delta_{\phi})\in\EG_N^{\circ}(\mathcal{H})$ and define a linear map
	\begin{equation*}
		Z:K(\mathcal{H}(\Delta_{\phi}))\rightarrow \mathbb{C}
	\end{equation*}
	by $Z:=Z_{\phi}\circ\nu_{\phi}$ where $Z_{\phi}:H_{\pm}(\phi)\rightarrow\mathbb{C}$ is aperiod map of $\phi$ and $\nu_{\phi}:K(\mathcal{H}(\Delta_{\phi}))\rightarrow H_{\pm}(\phi)$ is an isomorphism given by corollary \ref{cor_PC}. Then the pair $(Z,\mathcal{H}(\Delta_{\phi}))$ determine a stability condition $\sigma(\phi)=(Z,\mathcal{H}(\Delta_{\phi}))\in\mathrm{Stab}^{\circ}(D(\tilde{A}_{n,N}))$.
\end{lem}

Let $\mathrm{Sph}^{0}(\tilde{A}_{n,N})$ be the subgroup which acts as the identity on $K(D(\tilde{A}_{n,N}))$.

\begin{prop}
	There is a $\mathbb{C}$-equivariant holomorphic map $K_N$ such that the diagram 
	\begin{equation}\label{66}
		\xymatrix{     
			\mathrm{Quad}(N,n)^{\Gamma}_* \ar[rr]^{K_N} \ar[dr]_{\mathcal{W}_N} && \mathrm{Stab}^{\circ}(D(\tilde{A}_{n,N}))/\mathrm{Sph}^{0}(\tilde{A}_{n,N})\ar[dl]^{\mathcal{Z}} \\
			& \mathrm{Hom}_{\mathbb{Z}}(\Gamma,\mathbb{C})&\\
		}
	\end{equation}
	commutes. Where $\mathcal{W}_N$ and $\mathcal{Z}$ are local isomorphisms in (\ref{per}) and theorem \ref{localiso}.
\end{prop}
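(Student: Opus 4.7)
The plan is to adapt the Bridgeland–Smith–Ikeda framework to the affine setting. I would break the statement into five parts: well-definedness of $K_N$, commutativity of diagram (\ref{66}), the local isomorphism property, global bijectivity, and $\mathbb{C}^*$-equivariance.

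First I would check well-definedness. Starting from $(\phi,\theta)$, one rotates the phase slightly to obtain a saddle-free differential and uses its horizontal strip decomposition to produce the $N$-angulation $\Delta_\phi$, the heart $\mathcal{H}(\Delta_\phi)$, and the central charge $Z_\phi\circ\nu_\phi$ by Corollary \ref{cor_PC}. Any two such saddle-free choices within the same component of $\mathrm{Quad}(N,n)^\Gamma_*$ can be connected by a path transverse to the codimension-one wall of differentials with a single saddle trajectory; the wall-crossing analysis of the previous subsection, together with Lemma \ref{lem_spherical}, shows that on each crossing the hearts differ by a spherical twist $\Phi\in\mathrm{Sph}(\tilde{A}_{n,N})$, and the commutative diagrams (\ref{56}), (\ref{27}) and (\ref{30}) show that this twist acts trivially on $K(D(\tilde{A}_{n,N}))$ after identifying via $\kappa_\phi$ and $\kappa_{\phi'}$. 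Thus $\Phi\in \mathrm{Sph}^0(\tilde{A}_{n,N})$, so $K_N(\phi,\theta)$ is independent of the saddle-free representative.

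Second, the triangle (\ref{66}) commutes essentially by the definition of $K_N$: the central charge of $\sigma(\phi)$ is $Z_\phi\circ\nu_\phi$, and precomposing with the framing gives $Z_\phi\circ\theta=\mathcal{W}_N(\phi,\theta)$; this passes to the quotient since elements of $\mathrm{Sph}^0$ fix $K(D(\tilde{A}_{n,N}))$. For the local isomorphism property, $\mathcal{W}_N$ is a local isomorphism by Theorem 4.12 in \cite{BrSm} and $\mathcal{Z}$ is a local isomorphism by Theorem \ref{localiso}; the commutativity of (\ref{66}) then forces $K_N$ to be holomorphic and locally bijective. For $\mathbb{C}^*$-equivariance, note that $t\in\mathbb{C}^*$ scales $\sqrt{\phi}$, hence scales all periods by $t$, while on $\mathrm{Stab}$ the standard $\mathbb{C}^*$-action scales the central charge in the same way; both actions are compatible with $\mathcal{W}_N$ and $\mathcal{Z}$, and since $K_N$ is characterized by (\ref{66}) as a local isomorphism lift, equivariance follows.

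The hard part will be upgrading the local isomorphism to a global one. My plan is to argue in two steps. Injectivity is handled by a separation argument: if $K_N(\phi,\theta)=K_N(\phi',\theta')$, then their periods agree by the commutativity of (\ref{66}), and using the saddle-free chamber structure together with Corollary \ref{main} (which identifies $N$-angulations with simple-object data in hearts) and the classification of finite hearts in $\EG^\circ$, one concludes that the underlying combinatorial data agree, hence $(\phi,\theta)$ and $(\phi',\theta')$ lie in the same saddle-free cell and are equal after passing to the quotient. Surjectivity is the main obstacle: one must show every stability condition in $\mathrm{Stab}^\circ(D(\tilde{A}_{n,N}))/\mathrm{Sph}^0$ arises from some framed quadratic differential. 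I would decompose $\mathrm{Stab}^\circ$ into chambers indexed by finite hearts in $\EG^\circ$, show using the isomorphism $\EG^\circ(D(\tilde{A}_{n,N}))/\mathrm{Sph}(\tilde{A}_{n,N})\cong\mathrm{CEG}_{N-2}(\tilde{A}_{n,N})$ together with Corollary \ref{Torkildsen} that these chambers are in bijection with $N$-angulation chambers in $\mathrm{Quad}(N,n)^\Gamma_*/\mathrm{Sph}^0$, and verify that $K_N$ sends each saddle-free chamber isomorphically onto the corresponding heart chamber. Since chambers cover both spaces and $K_N$ respects wall-crossings by the previous subsection, surjectivity follows. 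Finally, combining injectivity, surjectivity, and the local isomorphism property yields that $K_N$ is a biholomorphism.
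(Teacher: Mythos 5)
The paper does not actually supply a proof of this proposition: it states that the argument is identical to the one in Ikeda's paper \cite{Ike16} and omits it, so your plan can only be measured against the Bridgeland--Smith--Ikeda template that both you and the paper invoke. Your treatment of the commutativity of the triangle, of the local isomorphism property (deduced from the two legs $\mathcal{W}_N$ and $\mathcal{Z}$ being local isomorphisms), and of $\mathbb{C}^*$-equivariance is the standard one and matches that template. One correction on well-definedness: the spherical twist $\Phi$ produced by a single wall-crossing is in general \emph{not} in $\mathrm{Sph}^{0}(\tilde{A}_{n,N})$ --- diagram (\ref{27}) shows precisely that $[\Phi]$ acts nontrivially on $K(D(\tilde{A}_{n,N}))$, intertwining $\lambda_{\phi'}$ with $\lambda_{\phi}$. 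What lies in $\mathrm{Sph}^{0}$ is the discrepancy $\Phi_1\Phi_2^{-1}$ between the composite twists attached to two different wall-crossing paths joining the base point to the same framed differential $(\phi,\theta)$ (the framings force the two composites to agree on $K$-theory); this is why the target must be the quotient by $\mathrm{Sph}^{0}$ and is the actual content of well-definedness.

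The genuine gap is in the passage from local to global isomorphism. Your injectivity argument --- equal periods imply equal combinatorial data --- cannot work as stated, because $\mathcal{W}_N$ is only a local isomorphism and is far from injective globally; framed differentials in different chambers can have identical period vectors. Your surjectivity argument --- ``chambers cover both spaces'' --- is also false as stated: the saddle-free chambers are open and dense but do not cover $\mathrm{Quad}(N,n)^{\Gamma}_*$, and the heart-chambers do not cover $\mathrm{Stab}^{\circ}$; the walls and higher-codimension strata (differentials with one or more saddle trajectories, stability conditions with several semistable classes of equal phase or with no reachable finite heart) are exactly the delicate locus. The argument that Bridgeland--Smith and Ikeda actually run, and which you should substitute here, is the open-and-closed argument: the image of $K_N$ is open because $K_N$ is a local isomorphism, and it is closed because a convergent sequence of stability conditions in the image has central charges converging, by the support property, to the period map of a limiting framed differential (this requires the limit theory for quadratic differentials). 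Since $\mathrm{Stab}^{\circ}(D(\tilde{A}_{n,N}))/\mathrm{Sph}^{0}(\tilde{A}_{n,N})$ is connected, surjectivity follows; injectivity is then proved separately by matching the two chamber decompositions via Corollary \ref{Torkildsen} and the exchange-graph isomorphism and checking that $K_N$ restricts to a bijection on each closed chamber. Without the closedness step your plan does not yield surjectivity.
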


\begin{proof}
	
	The proof of this proposition is the same in \cite{Ike16} and \cite{BrSm}, so we only sketch the idea.	
	We construct the map $K_N$ from the moduli space of quadratic differentials to the space of stability conditions on the stratum $D_0$. This map $K_N$ can be extend on other stratum $D_p$ by \cite[propsition 11.3]{BrSm}. 
	Let $\Phi$ be a spherical twist 
	
	For a framed saddle free quadratic differential $(\phi,\psi)\in \mathrm{Quad}(N,n)^{\Gamma}_*$ and a stability condition $\sigma(\phi)\in\mathrm{Stab}^{\circ}(D(\tilde{A}_{n,N}))$, we define the map
	$K_N$ by	
	\begin{equation*}
		K_N((\phi,\psi)):=\Phi(\sigma(\phi))\in\mathrm{Stab}^{\circ}(D(\tilde{A}_{n,N}))/\mathrm{Sph}^{0}(\tilde{A}_{n,N}),
	\end{equation*}	
	where $\Phi$ is given by lemma \ref{tw}. 
	
	The action on the $\mathrm{Quad}(N,n)^{\Gamma}_*$ is the pullback of the standard $\mathbb{C}^*$ action rescaling the quadratic differential. For $t\in \mathbb{C}$ and $(Z,\mathcal{P})$, there is a $\mathbb{C}$-action on $\mathrm{Stab}^{\circ}(D(\tilde{A}_{n,N}))$ defined by 
	\begin{equation*}
		Z'(E):=e^{-i\pi t}\cdot Z(E),\quad \mathcal{P}'(\theta)=\mathcal{P}(\theta+\mathrm{Re}(t)),
	\end{equation*}
	where $E\in D(\tilde{A}_{n,N})$ and $\theta\in\mathbb{R}$. The map $\mathcal{Z}$ is also $\mathbb{C}$-equivariant.
	Since $\mathcal{Z}$ and $\mathcal{W}_N$ are local isomorphisms by Theorem \ref{localiso} and \cite[Theorem 4.12]{BrSm}, we can conclude that $K_N$ is $\mathbb{C}$-equivariant.
\end{proof}	

Following the same argument in \cite[section 11]{BrSm}, we can construct the isomorphism of the complex manifold:
\begin{thm}
	\label{map}
	There is a $\mathbb{C}$-equivariant 
	holomorphic map of complex orbifolds  
	\begin{equation*}
		\mathrm{Quad}(N,n)_*^{\Gamma} \rightarrow \Stab^{\circ}(D(\tilde{A}_{n,N})) / \mathrm{Sph}^0(\tilde{A}_{n,N}) \\
	\end{equation*}
	In particular, this map is an isomorphism.
\end{thm}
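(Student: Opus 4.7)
The plan is to upgrade the $\mathbb{C}^*$-equivariant holomorphic isomorphism $K_N$ constructed in the preceding proposition from a map into $\Stab^{\circ}/\mathrm{Sph}^{0}(\tilde{A}_{n,N})$ to a map into the full quotient $\Stab^{\circ}/\mathrm{Sph}(\tilde{A}_{n,N})$, following the strategy of \cite{BrSm} and Section 6 of \cite{Ike16}.

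First I would record that the preceding proposition produces a $\mathbb{C}^*$-equivariant holomorphic local isomorphism over the period coordinate $\Hom_{\mathbb{Z}}(\Gamma,\mathbb{C})$: indeed $\mathcal{W}_N$ is a local isomorphism by the cited Theorem 4.12 of \cite{BrSm} and $\mathcal{Z}$ is one by Theorem \ref{localiso}, so the commuting diagram (\ref{66}) forces $K_N$ itself to be a local isomorphism. Consequently the only remaining content is to descend $K_N$ along the further quotient $\Stab^{\circ}/\mathrm{Sph}^{0} \to \Stab^{\circ}/\mathrm{Sph}$ and to promote the resulting map to a global bijection of complex orbifolds.

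Second I would identify the residual $\mathrm{Sph}/\mathrm{Sph}^{0}$-action on the target with the change-of-framing action on the source. A spherical twist $\Phi \in \mathrm{Sph}$ induces an automorphism $[\Phi] \in \Aut(K(D(\tilde{A}_{n,N})))$; via Corollary \ref{cor_PC} and Lemma \ref{homology_quiver} this translates into replacing a framing $\theta$ by $\theta \circ [\Phi]^{-1}$, so the quotient $\mathrm{Sph}/\mathrm{Sph}^{0}$ acts on $\mathrm{Quad}(N,n)_*^{\Gamma}$ by framing changes. The wall-crossing diagram (\ref{30}), combined with Lemma \ref{lem_spherical}, then shows that $K_N$ is equivariant for this action, hence descends.

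Third, to turn the descended map into a global isomorphism, I would use cell decompositions on both sides. The saddle-free locus cuts $\mathrm{Quad}(N,n)_*$ into chambers labelled by $N$-angulations glued along flips $\mu_{\delta_v}^{\sharp}$, while the $\mathbb{C}^*$-action together with finite hearts cover $\Stab^{\circ}$, with the chambers glued along simple tilts. The exchange graph identification from Corollary \ref{Torkildsen} together with the King--Qiu Theorem 4.10 matches these cell structures, and Lemma \ref{lem_spherical} matches the corresponding walls. Local isomorphism on each cell plus global matching of cells, combined with $\mathbb{C}^*$-equivariance, forces the descended map to be a global isomorphism of complex orbifolds, as in Section 7 of \cite{BrSm}.

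The main obstacle is the global matching step: a priori one could have chambers on either side with no corresponding chamber on the other, or monodromy around real-codimension-two strata producing identifications visible on one side but not the other. This is precisely what the exchange-graph isomorphism resolves, so the technical heart of the proof is verifying that the spherical twist produced by Lemma \ref{lem_spherical} on the categorical side is exactly the one that appears in the wall-crossing diagram (\ref{30}) on the differential side; once that is in place the rest of the argument is formal, and this is why the author refers back to \cite{Ike16} where this compatibility is established in detail.
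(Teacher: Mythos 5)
Your proposal is correct and follows essentially the route the paper intends: the paper itself gives no argument for Theorem \ref{map} beyond the phrase ``following the same argument in \cite{BrSm}'', and your sketch (local isomorphism from the diagram (\ref{66}), descent of $K_N$ through $\mathrm{Sph}/\mathrm{Sph}^{0}$ via the change-of-framing action, and global matching of chambers through the exchange-graph identification and Lemma \ref{lem_spherical}) is exactly the Bridgeland--Smith/Ikeda argument that the preceding sections of the paper are set up to feed into. The only caveat is one of bookkeeping already present in the paper's own statement: once you quotient the target by all of $\mathrm{Sph}$, the source must be the unframed space $\mathrm{Quad}(N,n)_*$ (equivalently the framed space modulo framing changes), as in the introduction, which your descent step implicitly supplies.
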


The moduli space $\mathrm{Quad}(N,n)$ of the quadratic differential $\phi(z)$ is isomorohic to the configuration space $\mathrm{Conf}^{n}(\mathbb{C}^{*})$ of $n$ distinct points in $\mathbb{C}^{*}$ quotient by $(\mathbb{Z}/d_{p}^{N})$ which acts by multiplication by a $d_{p}^{N}$-th root of unity. In the case $p=n/2$ we need to quotient by an extra factor of $\mathbb{Z}_2$ acting by $z\rightarrow1/z$.

Assume that $n \ge 3$. Then 
there is a short exact sequence of groups \cite[Theorem 9.10]{BrSm}
\begin{equation*}
	1 \to \mathrm{Sph}(\tilde{A}_{n,N}) \to \mathrm{Aut}(D(\tilde{A}_{n,N})
	\to \mathrm{MCG}(P_{d_{p}^{N},d_{q}^{N}}) \to 1.
\end{equation*}
From theorem \ref{faithful}, there is an isomorphism
\begin{equation*}
	B_{\tilde{A}_{n}}\iso \mathrm{Sph}(\tilde{A}_{n,N}), \quad \sigma_i \mapsto \mathrm{Tw}_{S_i}.
\end{equation*}
If $p\neq q$ the mapping class group of the marked annulus $P_{d^N_p,d^N_q}$ is given by \cite{ARS} 
\begin{equation*}
	H_{d_p^N,d_q^N}:=\{\, (r_1,r_2)\,|\,(r_1)^{d_p^N}=(r_2)^{d_q^N},\, r_1r_2=r_2r_1\,\}.
\end{equation*}
If $p=q$, $p\geq 2$ there is an extra symmetry interchanging the interior boundary and the exterior boundary so the mapping class group in this case is $H_{d_p^N,d_p^N}\rtimes\mathbb{Z}\slash 2$. Theofore, we can determine the autoequivalence group of the derived category of affine type $A$ quiver algebras.

\section*{Acknowledgements}

This article is part of the author's PhD thesis under Wu-yen Chuang's supervision. I am grateful for his constant encouragement and support.
I would like to thank Mohammed Abouzaid, Mandy-Cheung, Yu-Wei Fan, Joshua Sussan for useful discussion. I also would like to thank Akishi Ikeda and Hermund Torkildsen for the email correspondence and the anonymous referee for many suggestions. Part of this paper was done while I visited Columbia university, I appreciate their warm hospitality and providing me wonderful research environment. This work was supported by ministry of science and technology, Taiwan.

\newpage

\end{document}